\newcommand{\pr}{\mathbb{P}}
\newcommand{\E}{\mathbb{E}}
\renewcommand{\d}{\mathrm{d}}
\newcommand{\1}[1]{{\mathds{1}}_{\left\{#1\right\}}}
\newcommand{\A}{\mathcal{A}}
\renewcommand{\S}{\mathcal{S}}
\newtheorem{theorem}{Theorem}
\newtheorem{definition}{Definition}
\newtheorem{lemma}{Lemma}
\newtheorem{prop}{Proposition}
\DeclareMathOperator{\cov}{cov}
\DeclareMathOperator{\diag}{diag}
\title{Markov processes on quasi-random graphs }
\author{
	D\'aniel Keliger\\
	{\small Department of Stochastics,}\\
	{\small Budapest University of Technology and Economics}\\
	{\small e-mail: perfectumfluidum@gmail.com}\\[3mm]
}
\date{}
\begin{document}
	
	
	\maketitle
	
	\begin{abstract}
		We study Markov population processes on large graphs, with the local state transition rates of a single vertex being linear function of its neighborhood. A simple way to approximate such processes is by a system of ODEs called the homogeneous mean-field approximation (HMFA). Our main result is showing that HMFA is guaranteed to be the large graph limit of the stochastic dynamics on a finite time horizon if and only if the graph-sequence is quasi-random. Explicit error bound is given and being of order $\frac{1}{\sqrt{N}}$ plus the largest discrepancy of the graph. For Erdős Rényi and random regular graphs we show an error bound of order the inverse square root of the average degree. In general, diverging average degrees is shown to be a necessary condition for the HMFA to be accurate. Under special conditions, some of these results also apply to more detailed type of approximations like the inhomogenous mean field approximation (IHMFA). We pay special attention to epidemic applications such as the SIS process.
	\end{abstract}

\section{Introduction}

Markov processes are valuable tools for modeling populations from  the individual level. Applications range from physics \cite{Glauber} , chemistry \cite{kurtz2011}, engineering \cite{Illes2017}, biology \cite{MarkovBiology} and social phenomena \cite{Short2008}. 

Due to the inherent dependencies between individuals exact analysis of such models are infeasible when the population is large. To mitigate the problem, one may focus on population level quantities and derive a deterministic ODE approximation for them by assuming the law of mass action holds. We call such technique the homogeneous mean-field approximation (HMFA). Based on the pioneering works of Kurtz \cite{kurtz70, kurtz78} HMFA describes the dynamics of macroscopic quantities accurately in the limit when the population size approaches infinite when said population is well mixed - every individual interacts with each other whit equal chance.  

Besides well mixed populations, attention to processes whit an underlying topology represented by a network connecting individuals who can interact whit each other also emerged. From the '70s processes like the voter model and the contact process were studied on lattices \cite{Contactprocess}. Later in the 2000s inspired by new developments in network theory extended these investigations to scale free networks such as Barabási-Albert graphs \cite{Barabasigraph, Bollobas2003}. For these networks, HMFA turns out to be a too rough approximation \cite{NagyNoemi2013}, hence, new methods have been developed retaining more details about the topology, yet keeping the resulting ODE system tractable for analytical and computational studies.

 An intermediate approximation was given by Pastor-Satorras and Vespignany for the Susceptible-Infected-Susceptibel (SIS) process  based on the heuristic that vertices with higher degrees are more prone to get infected \cite{vesp}. To account for the this heterogeneity, the authors grouped together vertices whit the same degree into classes and treated elements statistically indistinguishable resulting in an ODE system whit the number of equations proportional to the number of different degrees. We will refer to this method as the inhomogenous mean field approximation (IMFA).  

A more detailed approach called quenched mean field approximation or N-intertwined mean field approximation (NIMFA) takes into account the whole network topology via the adjacency matrix only neglecting the dynamical correlations between vertices. This results in a larger ODE system whit size proportional to the number of vertices this time.

Both IMFA and NIMFA were studied in depth yielding valuable insight about epidemic processes most notable about the epidemic threshold. To justify said approximations several numerical studies had been carried out comparing theoretical forecasts to stochastic simulations showing moderate deviations \cite{MeanfieldSimulationAccuracy, MeanfieldSimulationAccuracy2}.

Rigorous theoretical results showing convergence or error bounds, on the other hand, are few and far between. Examples are the above mentioned results for complete graphs by Kurtz \cite{kurtz70, kurtz78}, Volz's equation \cite{volz} for the Susceptible-Infected-Revered process (SIR) on graphs generated by the configuration model \cite{configuration} being the correct mean field limit \cite{volzproof, Volz_biz_hard}  and some exact bounding dynamics \cite{simon2017NIMFA, NicholasJ2018, Chen2020}. 

Our work was heavily inspired by \cite{UnifiedMeanfield} where concepts of discrepancy and spectral gap were utilized to bound one source of the error arising form mean field approximations called the topological approximation.

The aim of this paper is to carry out a rigorous analysis of the HMFA. The motivation behind dwelling into the accuracy of HMFA is two folds. 

Firstly, applications may includes graphs like expanders, where well mixing is only an approximation yet one expects them to perform well, therefore, explicit error bounds might be usefull for these settings.  

 Secondly, we think of HMFA as a stepping stone for understanding more detailed approximations whit the right balance of complexity, rich enough to be interesting and relevant but not too difficult to be unreachable. In some special cases more advanced approximation techniques can be reduces to HMFA. For example, when the graph is regular IMFA and HMFA gives the same approximation as there are only one degree class.

Our main contribution is to characterize the type of graph sequences for which the ODE given by HMFA is the correct large graph limit. These type of graph sequences are those for which the appropriately scaled discrepancy goes to $0$, called quasi random graphs sequences. Thus, we reduce the problem to a purely graph-theoretical one.

Explicit error bound is given containing the number of vertices and the largest discrepancy. For the later we provide an upper bound based on the spectral gap.

For two types of random graphs: Erdős Rényi graphs and random regular graphs, we show that the error of HMFA can be upper-bounded by a constant times the inverse square root of the average degree, making them accurate for diverging average degrees. We also show that in general, diverging average degree is a necessary condition for the HMFA to be accurate.

The paper is structured as follows: In section \ref{s:graph} we introduces concepts and notations relating graph theory including discrepancies and spectral properties. In section \ref{c:markov} we introduce the type of Markov processes which are evolving on the networks. These models are such that only neighboring vertices can influence directly each others transition rates in a linear fashion. This framework includes among many other models the SIS and the SIR process. In section \ref{s:HMFA} we introduce HMFA in detail and define precisely what we mean by the HMFA being accurate for a given graph sequence. Section \ref{s:results} states the theorems and propositions which for which the proof are given in Section \ref{s:proof}.

\newpage
\section{Graph properties}
\label{s:graph}

\subsection{Basic notations}

$\subset$ denotes subsets whit the convention of including proper subset.

$\left(G^n \right)_{n=1}^{\infty}$ is a sequence of graphs with $2 \leq N=N(n)$ vertices. The vertex set is labeled with $[N]=\{1, \dots, N\}$. We assume $$\lim_{n\to \infty}N(n)=\infty.$$

The adjacency matrix is denoted by $\A^n=\left(a_{ij}^n \right)_{i,j=1}^N$. $\A^n$ is symmetric and the diagonal values are $0$. The degree of vertex $i$ is denoted by
$$d^n(i)= \sum_{j=1}^N a_{ij}^n.$$
The average degree is denoted by
$$ \bar{d}^n:= \frac{1}{N}\sum_{j=1}^N d^n(i). $$
We assume $\bar{d}^n>0$.

For regular graphs, $d^n(i)=\bar{d}^n=d^n$.

The largest connected component (or one of the largest components, if there are several) is denoted by $V_{\textrm{conn}}^n$.
$$\theta^n:= 1-\frac{\left|V_\textrm{conn}^n \right |}{N}$$
denotes the ratio of vertices not covered by the largest connected component.

A subset $A \subset [N]$ is called an independent set if none of the vertices in $A$ are connected to each other. The size of the largest independent set is denoted by $\alpha^n$.

Based on the work of Caro and Wei \cite{CaroWei}, $\alpha^n$ can be estimated from below by
\begin{align*}
\alpha^n \geq \sum_{i=1}^N \frac{1}{d^n(i)+1}.
\end{align*}
Applying Jensen's inequality yields Turán's bound:
\begin{align}
\label{alpha_bound}
& \alpha^n \geq \frac{N}{\bar{d}^n+1}.
\end{align}

The number of edges between $A,B \subset [N]$ (counting those in $A \cap B$ double) is denoted by
$$e(A,B)=\sum_{i \in A}\sum_{j \in B}a_{ij}^n. $$
$e \left( \cdot, \cdot    \right )$ is symmetric and additive, that is,
$$e(A \sqcup B, C)=e(A,C)+e(B,C) $$
where $\sqcup$ denotes disjoint union. $e \left(\cdot, \cdot \right)$ is also increasing in both variables, meaning, for $A, B,C \subset [N], \ A \subset B$,
\begin{align*}
& e(A,C) \leq e(B,C).
\end{align*}
Therefore
\begin{align*}
& 0 \leq e(A,B) \leq e([N],[N])=N \bar{d}^n.
\end{align*}

Also, for any $A \subset [N]$,
$$e(A,[N])=\sum_{i \in A}d^n(i).$$

$e(A,A)=0$ for $A \subset [N]$ independent.

\subsection{Discrepancies}


In this section, we introduce several different measures of how well-mixed a graph is \cite{SparseQuasirandom}. This will be measured mostly by edge density in and between various subsets.


The \emph{discrepancy} between two subsets of vertices $A,B\subset [N]$ is
\begin{align}
\label{eq:delta}
& \delta(A,B):=\frac{e(A,B)}{N \bar{d}^n}-\frac{|A|}{N} \frac{|B|}{N}.
\end{align}

When $A, B$ are random sets whit given sizes $\E \left( e(A,B) \right) \approx  \frac{\bar{d}^n}{N} |A| \cdot |B|$ motivating the definition of \eqref{eq:delta}.

$\delta(A,B)$ inherits the symmetric and additive properties of $e(A,B)$, but not monotonicity, since $\delta(A,B)$ might take negative values.

From $0 \leq \frac{e(A,B)}{N \bar{d}^n},\frac{|A|}{N} \frac{|B|}{N} \leq 1 $ it follows that
\begin{align*}
& 0 \leq | \delta(A,B)| \leq 1.
\end{align*}

Our main focus is the largest possible value of $| \delta(A,B)| $ denoted by
\begin{align}
\label{eq:del}
\partial^n:=\max_{A,B \subset [N]} | \delta(A,B)|.	
\end{align}
In general, a low $\partial^n$ guarantees that edge density is relatively homogeneous throughout the graph.


Based on this observation, we distinguish between two types of graph sequences:
\begin{itemize}
	\item $\left(G^n \right)_{n=1}^{\infty}$ is quasi-random if $\lim_{n \to \infty} \partial^n=0.$
	\item $\left(G^n\right)_{n=1}^{\infty}$ is non quasi-random if $\limsup_{n \to \infty}\partial^n>0.$
\end{itemize}
These are the only possibilities. The term quasi-random is motivated by the fact that for certain classes of random graphs, $\partial^n$ will indeed be small. This is addressed in more detail in Section \ref{c:random}.

The following measures of discrepancy will also be utilized:
\begin{align}
\label{eq:del1}
\partial_1^n :=& \max_{A \subset [N]}| \delta(A,A) |,\\
\label{eq:del2}
\partial_2^n :=&\max_{\substack{A,B \subset [N] \\ A \cap B= \emptyset}}|\delta(A,B)|, \\
\label{eq:del*}
\begin{split}
\partial_*^n :=& \max_{A \subset [N]}|\delta(A,[N])|=\max_{A \subset [N]} \left | \frac{1}{N \bar{d}^n} \sum_{i \in A}d^n(i)-\frac{|A|}{N} \right |=\\
&\max_{A \subset [N]} \left | \frac{1}{N \bar{d}^n} \sum_{i \in A}\left[d^n(i)-\bar{d}^{n} \right] \right |.
\end{split}
\end{align}

Intuitively, $\partial_1^n$ measures the worst possible discrepancy within a single set, while $\partial_2^n$ between two disjoint sets. $\partial_*^n$, on the other hand, depends only on the degree sequence of the graph, and measures the concentration of the degree distribution around $\bar{d}^n$. $\partial_*^n=0$ holds if and only if $G^n$ is regular.

Note that
\begin{align}
\label{eq:del*_nice}
&\partial_*^n=\delta \left(V_{+}^n, [N]\right)=-\delta \left(V_{-}^n, [N]\right)=\frac{1}{2N \bar{d}^n} \sum_{i =1}^N \left|d^n(i)-\bar{d}^{n} \right|,
\end{align}
where
\begin{align}
\label{eq:V+}
\begin{split}
&V_{+}^n:= \left\{ \left. i \in [N] \right | d^n(i) \geq \bar{d}^n \right\},\\
& V_{-}^n:= \left\{ \left. i \in [N] \right | d^n(i) < \bar{d}^n \right\}.
\end{split}
\end{align}
The hierarchies between the quantities are stated below.

\begin{lemma}
\label{l:hierarchy}
\begin{align}
\label{eq:hierarchy1}
 \max\left \{\partial_1^n, \partial_2^n \right \} \leq & \partial^n \leq \frac{11}{2}\partial_1^n \\
\label{eq:hierarchy2}
\partial_*^n \leq & \partial^n \\
\label{eq:hiererchy3}
\partial_1^n \leq & \partial_2^n+\partial_*^n
\end{align}
\end{lemma}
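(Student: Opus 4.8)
The plan is to reduce everything to the additivity and symmetry of $\delta$, which are inherited from the corresponding properties of $e(\cdot,\cdot)$ as noted after \eqref{eq:delta}: for disjoint sets $\delta(A \sqcup B, C) = \delta(A,C) + \delta(B,C)$, and $\delta(A,B) = \delta(B,A)$ in general. The three lower bounds $\partial_1^n \leq \partial^n$, $\partial_2^n \leq \partial^n$ and $\partial_*^n \leq \partial^n$ appearing in \eqref{eq:hierarchy1} and \eqref{eq:hierarchy2} are then immediate, since each of $\partial_1^n$, $\partial_2^n$, $\partial_*^n$ is a maximum of $|\delta(A,B)|$ taken over a subfamily of the pairs $(A,B)$ over which $\partial^n$ is maximized in \eqref{eq:del}.

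For \eqref{eq:hiererchy3} I would split $[N] = A \sqcup A^c$ and use additivity in the second argument to write $\delta(A,[N]) = \delta(A,A) + \delta(A,A^c)$, hence $\delta(A,A) = \delta(A,[N]) - \delta(A,A^c)$. Since $A$ and $A^c$ are disjoint, the triangle inequality gives $|\delta(A,A)| \leq |\delta(A,[N])| + |\delta(A,A^c)| \leq \partial_*^n + \partial_2^n$, and taking the maximum over $A$ yields $\partial_1^n \leq \partial_2^n + \partial_*^n$.

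The bulk of the work is the upper bound $\partial^n \leq \frac{11}{2}\partial_1^n$ in \eqref{eq:hierarchy1}. First I would treat disjoint sets via a polarization identity: expanding $\delta(A \sqcup B, A \sqcup B)$ by additivity and symmetry gives $\delta(A \sqcup B, A \sqcup B) = \delta(A,A) + 2\delta(A,B) + \delta(B,B)$, so that $2\delta(A,B) = \delta(A \sqcup B, A \sqcup B) - \delta(A,A) - \delta(B,B)$; the triangle inequality then bounds $|\delta(A,B)|$ by $\tfrac{3}{2}\partial_1^n$ for disjoint $A,B$, i.e. $\partial_2^n \leq \tfrac{3}{2}\partial_1^n$. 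To pass to arbitrary $A,B$ I would strip off the overlap by setting $P := A \setminus B$, $Q := B \setminus A$ and $R := A \cap B$, which are pairwise disjoint with $A = P \sqcup R$ and $B = Q \sqcup R$; additivity then gives $\delta(A,B) = \delta(P,Q) + \delta(P,R) + \delta(R,Q) + \delta(R,R)$. The three cross terms each involve disjoint sets and so are bounded by $\partial_2^n \leq \tfrac{3}{2}\partial_1^n$, while the diagonal term satisfies $|\delta(R,R)| \leq \partial_1^n$, giving $|\delta(A,B)| \leq 3 \cdot \tfrac{3}{2}\partial_1^n + \partial_1^n = \tfrac{11}{2}\partial_1^n$, and maximizing over $A,B$ finishes the proof.

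The step I expect to be the main obstacle is exactly this passage from disjoint to general $A,B$: because $\delta$ is neither monotone nor sign-definite, one cannot simply ignore the intersection, and it is the careful accounting of the overlap set $R$ through the four-term decomposition that forces the constant $\tfrac{11}{2} = 3 \cdot \tfrac{3}{2} + 1$. Everything else is routine manipulation of the additive/symmetric structure of $\delta$.
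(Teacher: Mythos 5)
Your proposal is correct and follows essentially the same route as the paper: the lower bounds by domain restriction, the identity $\delta(A,[N])=\delta(A,A)+\delta(A,A^c)$ for \eqref{eq:hiererchy3}, the polarization identity $\delta(A\sqcup B,A\sqcup B)=\delta(A,A)+2\delta(A,B)+\delta(B,B)$ giving $\partial_2^n\leq\tfrac{3}{2}\partial_1^n$, and the same four-term decomposition over $A\setminus B$, $B\setminus A$, $A\cap B$ yielding the constant $3\cdot\tfrac{3}{2}+1=\tfrac{11}{2}$. No gaps; your accounting of the overlap set matches the paper's proof exactly.
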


According to \eqref{eq:hierarchy1}, it is easy to see that $\partial^n$ and $\partial_1^n$ are equivalent in the sense that either both of them converge to $0$ or neither. Thus $\partial_1^n$ is also appropriate for characterizing whether the sequence $\left(G^n \right)_{n=1}^{\infty} $ is quasi-random or not.

Due to \eqref{eq:hierarchy2}, $\partial_*^n \to 0$ is necessary for the graph sequence to be quasi-random. However, it is not sufficient as the following example shows: $G^n$ is a bipartite graph on $N(n)=2n$ vertices and each vertex having degree $1$. As it is a regular graph, $\partial_*^n=0$ while choosing $A$ as one of the two classes leads to 
$|\delta(A,A)|=0+\left(\frac{1}{2} \right)^2=\frac{1}{4}$, so
$$\limsup_{n \to \infty} \partial^n \geq \frac{1}{4}>0.$$

Finally, from \eqref{eq:hierarchy1} and \eqref{eq:hiererchy3}, under the condition $\partial_*^n \to 0$, $\partial_1^n, \partial_2^n$ and $\partial^n$ are equivalent in the sense that either all of them converge to 0 or none of them.

Another measure of discrepancy, more suited towards spectral theoretical considerations later, is based on the \emph{volume} of a set $A \subset [N]$ defined as
$$ vol(A)=e(A,[N])=\sum_{i \in A }d^n(i). $$

The corresponding discrepancy is then defined as
\begin{align*}
&\tilde{\delta}(A,B):=\frac{e(A,B)}{vol([N])}-\frac{vol(A)}{vol([N])}\frac{vol(B)}{vol([N])}, \\
& \tilde{\partial}^n:=\max_{A,B \subset [N]} \left | \tilde{\delta}(A,B) \right |.
\end{align*}
Note that $vol([N])=e([N],[N])=N \bar{d}^n.$

When the degree distribution is fairly homogeneous, the two quantities do not differ much:
\begin{lemma}
\label{l:vol1}
$\forall A,B \subset [N]:$
\begin{align*}
\left |\delta(A,B)-\tilde{\delta}(A,B) \right | & \leq 2 \partial^n_*, \\
\left|\partial^n -\tilde{\partial}^n \right | & \leq 2 \partial_*^n	.
\end{align*}
\end{lemma}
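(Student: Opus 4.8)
The plan is to exploit the fact that the two discrepancies share an \emph{identical} edge-density term, so that their difference reduces to a difference of two products whose factors are individually controlled by $\partial_*^n$.

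First I would observe that $vol([N]) = N \bar{d}^n$, which makes the leading terms $\frac{e(A,B)}{N \bar{d}^n}$ and $\frac{e(A,B)}{vol([N])}$ literally equal. Hence the difference collapses to
\[
\delta(A,B) - \tilde{\delta}(A,B) = \frac{vol(A)}{vol([N])}\frac{vol(B)}{vol([N])} - \frac{|A|}{N}\frac{|B|}{N}.
\]
Introducing the shorthand $p_A := |A|/N$ and $q_A := vol(A)/vol([N])$ (and likewise for $B$), I would next note that $q_A - p_A = \delta(A,[N])$ directly from the middle expression in \eqref{eq:del*}, so that $|q_A - p_A| \leq \partial_*^n$ and $|q_B - p_B| \leq \partial_*^n$.

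The core step is the elementary product-difference identity
\[
q_A q_B - p_A p_B = (q_A - p_A)\, q_B + p_A\, (q_B - p_B),
\]
which, combined with the two bounds just established and the trivial inequalities $q_B \leq 1$ and $p_A \leq 1$ (both being ratios of nonnegative quantities bounded by the total), yields $|q_A q_B - p_A p_B| \leq 2\partial_*^n$. This is exactly the first claimed inequality.

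For the second inequality I would deduce it from the first by a triangle-inequality-and-maximise argument. For any fixed $A,B$ we have $|\delta(A,B)| \leq |\tilde{\delta}(A,B)| + 2\partial_*^n$, so taking the maximum over all $A,B$ gives $\partial^n \leq \tilde{\partial}^n + 2\partial_*^n$; the reverse bound follows by the symmetric argument, so that $|\partial^n - \tilde{\partial}^n| \leq 2\partial_*^n$. There is no genuine obstacle here — the whole argument is a short computation — but the one point worth care is choosing how to pair factors with differences in the product-difference identity, so that the \emph{surviving} factors ($q_B$ and $p_A$) are precisely the two already known to be bounded by $1$.
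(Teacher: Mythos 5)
Your proposal is correct and follows essentially the same route as the paper's proof: both cancel the identical edge-density terms (since $vol([N]) = N\bar{d}^n$), bound the resulting difference of products by the two factor differences $|\delta(A,[N])|$ and $|\delta(B,[N])| \leq \partial_*^n$, and obtain the second inequality by maximizing over extremal sets in both directions. The only cosmetic difference is that you spell out the product-difference identity explicitly, whereas the paper states the resulting bound directly.
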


We also define discrepancy with respect to induced subgraphs. Let $H$ be an induced subgraph of $G$ (identified as a subset of the vertices $H\subset [N]$). Then for any $A,B \subset [N]$,
$$e_H(A,B):=e\left(A \cap H, B \cap H \right).$$

The discrepancy on $H$ is defined as
\begin{align*}
& \tilde{\delta}_H(A,B):= \frac{e_H(A,B)}{vol(H)}-\frac{vol \left(A \cap H \right)}{vol(H)}\frac{vol \left(B \cap H \right)}{vol(H)}, \\
& \tilde{\partial}_H^n:=\max_{A,B \subset [N]}\left|\tilde{\delta}_H(A,B) \right|=\max_{A,B \subset H}\left|\tilde{\delta}_H(A,B) \right|.
\end{align*}
These quantities are insensitive to the structure of $G$ on $H^c$.

When $H$ includes most of the vertices of the original graph, $\tilde{\partial}^n$ and $\tilde{\partial}_H^n$ are close, as formulated rigorously by the following lemma:
\begin{lemma}
\label{l:vol2}
Assume $\frac{vol\left(H^c\right)}{vol([N])} \leq \frac{1}{2}.$ Then $ \forall A,B \subset [N] $
\begin{align*}
\left |\tilde{\delta}(A,B)-\tilde{\delta}_H(A,B) \right | &\leq 10 \frac{vol\left(H^c\right)}{vol([N])},  \\
\left | \tilde{\partial}^n-\tilde{\partial}_H^n \right | &\leq 10 \frac{vol\left(H^c\right)}{vol([N])}.
\end{align*}
\end{lemma}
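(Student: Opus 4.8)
The plan is to establish the pointwise estimate $\left|\tilde{\delta}(A,B)-\tilde{\delta}_H(A,B)\right| \le 10\,\frac{vol(H^c)}{vol([N])}$ for every fixed pair $A,B\subset[N]$; the bound on $\left|\tilde{\partial}^n-\tilde{\partial}_H^n\right|$ is then automatic, since both $\tilde{\partial}^n$ and $\tilde{\partial}_H^n$ are maxima over the same index set $A,B\subset[N]$ (using the identity $\tilde{\partial}_H^n=\max_{A,B\subset[N]}|\tilde{\delta}_H(A,B)|$ noted just before the lemma), and for any two functions one has $\bigl|\max_{A,B}|\tilde{\delta}(A,B)|-\max_{A,B}|\tilde{\delta}_H(A,B)|\bigr|\le\max_{A,B}\bigl|\tilde{\delta}(A,B)-\tilde{\delta}_H(A,B)\bigr|$. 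So everything reduces to the fixed-$(A,B)$ estimate.

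Throughout I would write $V:=vol([N])$, $W:=vol(H)$ and $\epsilon:=\frac{vol(H^c)}{vol([N])}=\frac{V-W}{V}$. The hypothesis $\epsilon\le\frac12$ gives $W=(1-\epsilon)V\ge\frac12 V$, which is precisely what keeps the two normalizations $1/V$ and $1/W$ (and $1/V^2$, $1/W^2$) comparable. I would then split $\tilde{\delta}(A,B)-\tilde{\delta}_H(A,B)$ into an \emph{edge-density part} $\frac{e(A,B)}{V}-\frac{e_H(A,B)}{W}$ and a \emph{volume-product part} $\frac{vol(A)\,vol(B)}{V^2}-\frac{vol(A\cap H)\,vol(B\cap H)}{W^2}$, and bound each in absolute value by $2\epsilon$, so that the total is $4\epsilon\le10\epsilon$. (The stated constant $10$ is thus comfortably non-tight.)

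For the edge-density part, additivity of $e(\cdot,\cdot)$ applied to $A=(A\cap H)\sqcup(A\cap H^c)$ and $B=(B\cap H)\sqcup(B\cap H^c)$ yields $e(A,B)-e_H(A,B)=e(A,B\cap H^c)+e(A\cap H^c,B\cap H)$; since $e(\cdot,\cdot)$ is increasing in both arguments, each summand is at most $e([N],H^c)=vol(H^c)=V-W$, so $0\le e(A,B)-e_H(A,B)\le 2(V-W)$. Rewriting $\frac{e(A,B)}{V}-\frac{e_H(A,B)}{W}=\frac{e(A,B)-e_H(A,B)}{V}-e_H(A,B)\,\frac{V-W}{VW}$ and using $e_H(A,B)\le vol(H)=W$ bounds the first summand by $2\epsilon$ and the second by $\epsilon$, hence the whole part by $2\epsilon$.

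The volume-product part is handled the same way: $vol(A)-vol(A\cap H)=vol(A\cap H^c)\in[0,V-W]$ together with $vol(A\cap H)\le W$ give $\bigl|\frac{vol(A)}{V}-\frac{vol(A\cap H)}{W}\bigr|\le\epsilon$, and likewise for $B$; since all four quantities $\frac{vol(A)}{V},\frac{vol(B)}{V},\frac{vol(A\cap H)}{W},\frac{vol(B\cap H)}{W}$ lie in $[0,1]$, the elementary product bound $|xy-x'y'|\le|x-x'|+|y-y'|$ controls this part by $2\epsilon$. The one point that genuinely needs care — the main obstacle — is that passing from $G$ to $H$ perturbs numerators (edge counts and volumes) and denominators (total volume) simultaneously, so the two effects must be separated cleanly and recombined; the hypothesis $vol(H^c)/vol([N])\le\frac12$ is exactly what prevents the denominator change $1/V\to1/W$ from being uncontrolled.
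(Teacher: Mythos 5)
Your proof is correct, and it follows the paper's high-level plan (split $\tilde{\delta}-\tilde{\delta}_H$ into an edge-density part and a volume-product part, then get the second inequality from the pointwise one), but the way you estimate each part is genuinely different and tighter. For the edge part the paper expands $e(A,B)$ into the $H$/$H^c$ blocks and collects four error terms of size $vol(H^c)/vol([N])$ each, getting $4\epsilon$; you instead separate the numerator change from the denominator change via the identity $\frac{e(A,B)}{V}-\frac{e_H(A,B)}{W}=\frac{e(A,B)-e_H(A,B)}{V}-e_H(A,B)\frac{V-W}{VW}$ and get $2\epsilon$. More significantly, for the volume-product part the paper passes through the intermediate $\frac{vol(A)}{vol(H)}\frac{vol(B)}{vol(H)}$, where the numerators (up to size $V$) are no longer matched to the denominator $W$; this is what forces the inequality $\frac{1}{1-x}-1\le 2x$ and hence the hypothesis $x=\frac{vol(H^c)}{vol([N])}\le\frac12$, and costs $6\epsilon$. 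You compare $\frac{vol(A)}{V}$ directly with $\frac{vol(A\cap H)}{W}$, keeping $vol(A\cap H)\le W$ paired with its own denominator, so each factor moves by at most $\epsilon$ and the product bound gives $2\epsilon$. The net effect: your constant is $4$ rather than $10$, and the hypothesis $\frac{vol(H^c)}{vol([N])}\le\frac12$ is never used in your estimates except implicitly to guarantee $vol(H)>0$ so that $\tilde{\delta}_H$ is well defined --- worth stating explicitly, since as written your proof appears to need no hypothesis at all. Your reduction of the second inequality via $\bigl|\max_{A,B}|\tilde{\delta}|-\max_{A,B}|\tilde{\delta}_H|\bigr|\le\max_{A,B}|\tilde{\delta}-\tilde{\delta}_H|$ is the same in substance as the paper's extremal-set argument, just packaged as a general fact.
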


A similar statement was given in \cite{ModularityER} for a related quantity called modularity.

\subsection{Spectral properties}
\label{c:spectral}

In this section we discuss how discrepancies can be bounded using spectral theory.

We introduce the diagonal degree matrix $\mathcal{D}^n:=\diag \left(d^n(1), \dots, d^n(N) \right)$ and re-scale the adjacency matrix as
\begin{align*}
& \mathcal{B}^n:=\left( \mathcal{D}^n \right)^{-\frac{1}{2}} \mathcal{A}^n \left( \mathcal{D}^n \right)^{-\frac{1}{2}}.
\end{align*}

We order the eigenvalues as $\lambda_1\left( \mathcal{B}^n \right) \geq \dots \geq \lambda_N\left( \mathcal{B}^n \right).$
According to the Perron-Frobenious theorem, all eigenvalues are real, and  $\left | \lambda_i\left(\mathcal{B}^n\right) \right | \leq \lambda_1\left(\mathcal{B}^n\right). $

The second largest eigenvalue in absolute value is denoted by
\begin{align*}
& \lambda^n:=\max \left \{\lambda_2\left(\mathcal{B}^n\right), -\lambda_N\left(\mathcal{B}^n\right) \right \}.
\end{align*}
$1-\lambda^n$ is called the spectral gap. Matrices with large spectral gap are generally ``nice'': they have good connectivity properties, random walks on them converges to equilibrium fast and vertices are well-mixed. The following proposition is a special case of Lemma 1 in \cite{Bolla2011}.

\begin{prop} (Expander mixing lemma)
For all $A,B \subset [N]$
\begin{align}
\label{eq:expander}
\left|\frac{e(A,B)}{vol([N])}-\frac{vol(A)}{vol([N])} \frac{vol(B)}{vol([N])} \right| \leq \lambda^n \sqrt{\frac{vol(A)}{vol([N])}\frac{vol(B)}{vol([N])}}.
\end{align}
\end{prop}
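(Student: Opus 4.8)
The plan is to realize $e(A,B)$ as a quadratic form in the normalized matrix $\mathcal{B}^n$ and then use its spectral decomposition, isolating the top eigenpair. First I would write $e(A,B)=\mathbf{1}_A^{\top}\mathcal{A}^n\mathbf{1}_B$, where $\mathbf{1}_A,\mathbf{1}_B$ are the indicator vectors of $A,B$. Since $\mathcal{A}^n=\left(\mathcal{D}^n\right)^{1/2}\mathcal{B}^n\left(\mathcal{D}^n\right)^{1/2}$, this becomes $e(A,B)=x^{\top}\mathcal{B}^n y$ with $x:=\left(\mathcal{D}^n\right)^{1/2}\mathbf{1}_A$ and $y:=\left(\mathcal{D}^n\right)^{1/2}\mathbf{1}_B$, so that $x_i=\sqrt{d^n(i)}\,\mathbf{1}_{i\in A}$ and hence $\|x\|^2=\sum_{i\in A}d^n(i)=vol(A)$, and similarly $\|y\|^2=vol(B)$. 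This bookkeeping converts the combinatorial quantity into a clean inner product governed by the spectrum of the symmetric matrix $\mathcal{B}^n$.

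The key observation is to identify the top eigenpair. I would verify directly that the vector $v$ with $v_i=\sqrt{d^n(i)}$ satisfies $\mathcal{B}^n v=v$, because $\left(\mathcal{B}^n v\right)_i=\frac{1}{\sqrt{d^n(i)}}\sum_j a^n_{ij}=\sqrt{d^n(i)}$; by Perron--Frobenius this is the largest eigenvalue, $\lambda_1\left(\mathcal{B}^n\right)=1$. Writing the normalized eigenvector as $\phi_1=v/\|v\|$ with $\|v\|^2=vol([N])$, one computes $x^{\top}\phi_1=vol(A)/\sqrt{vol([N])}$ and $\phi_1^{\top}y=vol(B)/\sqrt{vol([N])}$. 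Expanding $x^{\top}\mathcal{B}^n y=\sum_{k=1}^N\lambda_k\left(\mathcal{B}^n\right)\left(x^{\top}\phi_k\right)\left(\phi_k^{\top}y\right)$ in an orthonormal eigenbasis $\phi_1,\dots,\phi_N$, the $k=1$ term is exactly $\frac{vol(A)\,vol(B)}{vol([N])}$, which is the quantity being subtracted. Thus the left-hand side (before dividing by $vol([N])$) equals the tail sum $\sum_{k=2}^N\lambda_k\left(\mathcal{B}^n\right)\left(x^{\top}\phi_k\right)\left(\phi_k^{\top}y\right)$.

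It remains to bound this tail. For every $k\geq 2$ we have $\lambda_N\left(\mathcal{B}^n\right)\leq\lambda_k\left(\mathcal{B}^n\right)\leq\lambda_2\left(\mathcal{B}^n\right)$, hence $\left|\lambda_k\left(\mathcal{B}^n\right)\right|\leq\lambda^n$ by definition of $\lambda^n$. Pulling out this bound and applying Cauchy--Schwarz to the coordinates $\left(x^{\top}\phi_k\right)_k$ and $\left(\phi_k^{\top}y\right)_k$, then dropping the constraint $k\geq 2$ (Bessel/Parseval), gives $\sum_{k=2}^N\left|x^{\top}\phi_k\right|\left|\phi_k^{\top}y\right|\leq\|x\|\,\|y\|=\sqrt{vol(A)\,vol(B)}$. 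Dividing through by $vol([N])$ yields the claimed inequality. I do not expect a genuine obstacle here, as the argument is the standard expander-mixing computation; the only points requiring care are confirming that the Perron eigenvalue is precisely $1$ so that the $k=1$ term cancels the product term exactly, and observing that reinserting the omitted $k=1$ index in the Cauchy--Schwarz step only enlarges the bound, producing the clean factor $\|x\|\,\|y\|$.
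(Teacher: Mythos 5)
Your proof is correct, but it is worth noting that the paper does not prove this proposition at all: it is stated as a special case of Lemma~1 in the cited work of Bolla (2011), which treats a more general (multi-way, degree-weighted) discrepancy bound. What you supply is the classical self-contained spectral argument: writing $e(A,B)=x^{\top}\mathcal{B}^n y$ with $x=\left(\mathcal{D}^n\right)^{1/2}\mathbf{1}_A$, $y=\left(\mathcal{D}^n\right)^{1/2}\mathbf{1}_B$, checking that $v_i=\sqrt{d^n(i)}$ is a Perron eigenvector of $\mathcal{B}^n$ with eigenvalue $1$ so that the $k=1$ term of the eigenexpansion exactly reproduces $vol(A)\,vol(B)/vol([N])$, and then bounding the tail by $\lambda^n\|x\|\|y\|$ via Cauchy--Schwarz and Parseval. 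All steps check out: $\|x\|^2=vol(A)$, the identification $\lambda_1\left(\mathcal{B}^n\right)=1$ follows from Perron--Frobenius because $v$ is a strictly positive eigenvector of a nonnegative matrix, and $\left|\lambda_k\left(\mathcal{B}^n\right)\right|\leq\lambda^n$ for $k\geq 2$ is exactly the definition of $\lambda^n$. The one hypothesis you use silently is that every degree is positive, so that $\left(\mathcal{D}^n\right)^{-1/2}$ exists and $v>0$; the paper's general setup allows isolated vertices (it only assumes $\bar{d}^n>0$), but this assumption is already implicit in the paper's definition of $\mathcal{B}^n$, so you are no worse off than the source. The trade-off between the two routes is the usual one: the citation buys brevity and a more general statement, while your argument makes the proposition verifiable within the paper and makes transparent exactly where the constant $1$ (the Perron eigenvalue) and the factor $\sqrt{vol(A)\,vol(B)}$ come from.
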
 

From \eqref{eq:expander} it is easy to see that
\begin{align}
\label{eq:spectral}
& \tilde{\partial}^n \leq \lambda^n,
\end{align}
meaning a large spectral gap guarantees low discrepancies, at least in the degree biased setting.

When $G^n$ is regular, the expressions simplify to $\tilde{\partial}^n=\partial^n$ and $\lambda^n$ can be expressed as
$$ \lambda^n=\frac{1}{d^n}\max \left \{\lambda_2\left(\mathcal{A}^n\right), -\lambda_N\left(\mathcal{A}^n\right) \right \}. $$

For fixed $d^n=d$, the spectral gap can not be too close to $1$. Based on \cite{Nilli1991}, for every $d$ and $\varepsilon>0$ there exists an $N_0$ such that for every graph with at least $N_0$ vertices
\begin{align}
\label{eq:spectral_lower_bound}
\lambda^n \geq \frac{2 \sqrt{d-1}}{d}-\varepsilon. 
\end{align}

\subsection{Random graphs}
\label{c:random}

In this subsection we discuss the types of random graphs used in this paper and their properties.

The first example is the Erd\H os-R\'enyi graph $\mathcal{G}_{\textrm{ER}}(N,p^n)$, which contains $N$ vertices and each pair of vertices are connected with probability $p^n$, independent from other pairs. The expected average degree is
$$ \langle d \rangle^n :=(N-1)p^n. $$

Another type of random graphs of interest is the random regular graph, denoted by $\mathcal{G}_\textit{reg}(N,d^n).$ It is a random graph chosen uniformly from among all $d^n$-regular graphs on $N$ vertices ($Nd^n$ is assumed to be even).

The bound \eqref{eq:spectral_lower_bound} is sharp for $d$-regular random graphs with even $d\geq 4$ in the following sense \cite{d_reugular_eigenvalue}:
\begin{align}
\label{eq:ramanujan}
& \lambda^n \leq \frac{2 \sqrt{d-1}}{d}+\varepsilon \ \ w.h.p.
\end{align}
Similar results are shown in \cite{d_reugular_eigenvalue} when $d$ is odd.

For diverging degrees  $N^{\alpha} \leq d^n \leq \frac{N}{2}$ for some $0 < \alpha <1$ there is some $C_\alpha$ such that 
\begin{align}
\label{eq:ramanujan2}
& \lambda^n \leq \frac{C_\alpha}{\sqrt{d^n}}
\end{align} 
with probability at least $1-\frac{1}{N}$ \cite{spectralgap_dense_regular}.

The spectral gap of the Erdős-Rényi graph is analogous whit the catch that  $d^n$ should be replaced with $\langle d \rangle ^n$ at least when one constrain ourselves to an appropriate large subgraph. For this purpose we introduce the core of the matrix defined in \cite{LaplacianER}.

The $core\left(G^n\right)$ is a subgraph spanned by $H \subset [N]$ where $H$ is constructed as follows.
\begin{itemize}
	\item Initialize $H$ as the subset of vertices who have at least $\frac{\langle d \rangle ^n}{2}$ vertices.
	\item While there is a vertex in $H$ with eat least $100$ neighbors in $[N] \setminus H$ remove that vertex.
\end{itemize}

According to the following proposition $core\left(G^n\right)$ is a sub graph covering most of the vertices whit a large spectral gap.
\begin{prop}
	\label{p:core}
	Assume $c_0 \leq \langle d \rangle^n \leq 0.99N $ for some sufficiently large $c_0$. Then there is a $c_1$ such that w.h.p.
	\begin{align*}
	& \left|H^c\right| \leq N e^{-\frac{\langle d \rangle^n}{c_0}} \\
	& \lambda_H^n \leq \frac{c_1}{\sqrt{\langle d \rangle^n}},
	\end{align*}
	where $1-\lambda_H^n$ is the spectral gap on the on the subgraph. 
\end{prop}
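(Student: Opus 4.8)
The plan is to follow the strategy behind \cite{LaplacianER}, handling the two assertions by different mechanisms. Write $H_0$ for the vertex set surviving initialization and $W_0 := H_0^c$ for the vertices of degree below $\langle d \rangle^n/2$, and let $K$ be the set of vertices deleted during the peeling loop, so that $H^c = W_0 \sqcup K$. Since $d^n(v)$ is a sum of $N-1$ independent Bernoulli$(p^n)$ variables with mean $\langle d \rangle^n$, a multiplicative Chernoff bound gives $\pr\!\left(d^n(v) < \langle d \rangle^n/2\right) \le e^{-\langle d \rangle^n/8}$, whence $\E|W_0| \le N e^{-\langle d \rangle^n/8}$; a standard concentration argument (bounded differences under vertex exposure, or a second-moment estimate since the degree indicators are only weakly dependent) upgrades this to $|W_0| \le N e^{-\langle d \rangle^n/c_0}$ w.h.p. once $c_0$ is chosen large.

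For $K$ I would run a first-passage/cascade argument. Denote by $\Gamma(S)$ the set of vertices having at least $100$ neighbours inside $S$. By the peeling rule each $v \in K$ has at least $100$ neighbours in the complement present at the moment of its removal, and that complement is contained in the final $H^c$; hence $K \subseteq \Gamma(H^c)$ and $|H^c| \le |W_0| + |\Gamma(H^c)|$. The decisive point is that $H^c$ is exponentially small, so having many neighbours inside it is very unlikely: for a fixed $S$ with $|S| = s$ one has $\pr\!\left(v \in \Gamma(S)\right) \le \binom{s}{100}(p^n)^{100} \le \left(\frac{e\,s\,\langle d \rangle^n}{100\,N}\right)^{100}$, and a union bound restricted to $s \le 2 N e^{-\langle d \rangle^n/8}$ — the only range the cascade can reach — shows that w.h.p., simultaneously for all such $S$, $|\Gamma(S)| \le |S|/4$. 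Because the peeling grows the complement one vertex at a time starting from $W_0$, a first-passage argument then prevents $|H^c|$ from ever attaining $2 N e^{-\langle d \rangle^n/8}$ and gives $|H^c| \le \frac{4}{3}|W_0| \le N e^{-\langle d \rangle^n/c_0}$, the first claim.

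For the spectral gap I would work directly with $\mathcal{B}_H^n = \mathcal{D}_H^{-1/2}\mathcal{A}_H\mathcal{D}_H^{-1/2}$, the normalized adjacency matrix of the subgraph induced on $H$. Its largest eigenvalue is exactly $1$, with explicit Perron eigenvector $\psi = \mathcal{D}_H^{1/2}\mathbf{1}/\|\mathcal{D}_H^{1/2}\mathbf{1}\|$, so that, writing $\Pi = \psi\psi^\top$, one has the identity $\lambda_H^n = \|\mathcal{B}_H^n - \Pi\|$ and the task reduces to a single operator-norm estimate. The construction is tailored to make this estimate hold at scale $1/\sqrt{\langle d \rangle^n}$: the minimum-degree guarantee $d^n(\cdot) \ge \langle d \rangle^n/2$ on $H$ yields $\mathcal{D}_H^{-1/2} \preceq \sqrt{2/\langle d \rangle^n}\,I$ and, more importantly, tames the atypically high-degree vertices that survive peeling, since the normalized star centred at any degree-$D$ vertex $v$ has operator norm $\left(\frac{1}{D}\sum_{u \sim v}\frac{1}{d^n(u)}\right)^{1/2} \le \sqrt{2/\langle d \rangle^n}$ independently of $D$. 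Given these controls, $\|\mathcal{B}_H^n - \Pi\| = O(1/\sqrt{\langle d \rangle^n})$ w.h.p. is a normalized-matrix concentration estimate of Feige--Ofek / Chung--Radcliffe type, the ``fewer than $100$ edges to $H^c$'' property being what lets one replace the induced normalization by the ambient one and identify $\Pi$ with the Erd\H os--R\'enyi mean direction. This delivers $\lambda_H^n = O(1/\sqrt{\langle d \rangle^n})$; one could instead bound $\tilde{\partial}_H^n$ through the expander mixing lemma, the analogue of \eqref{eq:spectral} for the subgraph and Lemma \ref{l:vol2}, but controlling $\lambda_H^n$ itself requires the operator-norm statement.

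\textbf{Main obstacle.} The crux is the concentration estimate of the previous paragraph, which genuinely \emph{fails} for the full graph: on sparse Erd\H os--R\'enyi graphs the normalized adjacency matrix does not concentrate, because low-degree vertices produce localized near-eigenvectors with eigenvalues bounded away from $0$ (this is what the initialization step removes) while high-degree vertices heavily connected to the bad part distort the spectrum (this is what peeling removes). Establishing the restricted bound calls for a Feige--Ofek dyadic discretisation of the quadratic form $x^\top(\mathcal{B}_H^n - \Pi)y$, splitting coordinates by magnitude into a ``light'' part controlled by a Bernstein/matrix-concentration inequality and a ``heavy'' part controlled by the bounded discrepancy and the bounded boundary-leakage of the core. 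The genuinely delicate step is verifying that the two explicit structural properties the construction provides — degree at least $\langle d \rangle^n/2$ and fewer than $100$ edges leaving $H$ per vertex — are precisely the inputs this argument needs; once that is checked, both the size bound and the spectral bound follow as in \cite{LaplacianER}.
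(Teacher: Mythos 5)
The paper itself does not prove Proposition \ref{p:core}: immediately after the statement it says the proof can be found in \cite{LaplacianER}, so the proposition is an imported result and the only ``proof'' to compare against is that citation. Your sketch correctly reproduces the architecture of the cited argument (Chernoff bound for the initialization, expansion/cascade bound for the peeling, a restricted operator-norm estimate for the spectrum), but as a standalone proof it has genuine gaps. First, the size bound: your cascade argument is inconsistent about scales. You claim $|W_0| \le N e^{-\langle d \rangle^n/c_0}$ w.h.p., but you establish the expansion property $|\Gamma(S)| \le |S|/4$ only for $|S| \le 2N e^{-\langle d \rangle^n/8}$; since $N e^{-\langle d \rangle^n/c_0} \gg 2N e^{-\langle d \rangle^n/8}$ whenever $\langle d \rangle^n \ge c_0 > 8$, the set $W_0$ is not known to lie below the threshold, the complement may already start outside the range where expansion holds, and the first-passage confinement together with the conclusion $|H^c| \le \frac{4}{3}|W_0|$ does not follow. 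Repairing this requires $|W_0| \le \frac{3}{2} N e^{-\langle d \rangle^n/8}$ w.h.p., i.e.\ concentration at the scale $N e^{-\langle d \rangle^n/8}$ itself, and neither of your proposed mechanisms delivers it: Markov gives only a constant probability bound, and bounded differences over the $\binom{N}{2}$ edge variables (each moving $|W_0|$ by at most $2$) controls deviations only at scale $N$. What does work is a second-moment bound with the sharp covariance estimate $\cov \le p^n\cdot(\text{binomial point mass in the tail})^2$, split into regimes: for $\langle d \rangle^n \le C\log N$ Chebyshev closes because $N e^{-\langle d \rangle^n/8}$ is polynomially large, while for $\langle d \rangle^n \ge C\log N$ one must show $W_0=\emptyset$ w.h.p., which needs a tail exponent strictly better than $1/8$ to cover the window $\langle d \rangle^n \approx 8\log N$. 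A further subtlety: in the application $S = H^c$ and the removed vertices lie \emph{inside} $S$, so you must bound $|\Gamma(S)|$ including vertices of $S$; for those the events $\{v \in \Gamma(S)\}$ share edges and are positively correlated, so the implicit product bound fails, and one should count edges instead ($e(K, W_0 \cup K) \ge 100|K|$), as is done in \cite{LaplacianER}.

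Second, and more importantly, the spectral bound is deferred rather than proven. The estimate $\|\mathcal{B}_H^n - \Pi\| = O\bigl(1/\sqrt{\langle d \rangle^n}\bigr)$ w.h.p.\ \emph{is} the proposition; declaring it ``a normalized-matrix concentration estimate of Feige--Ofek / Chung--Radcliffe type'' and closing with ``once that is checked, both the size bound and the spectral bound follow as in \cite{LaplacianER}'' makes the argument circular, because executing the heavy/light dyadic decomposition using only the two structural properties of the core (minimum degree, and fewer than $100$ edges leaving $H$ per vertex) is exactly the technical content of the cited paper --- as you yourself note, the unrestricted statement is false for sparse $p^n$, so no off-the-shelf concentration theorem can be invoked. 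There is also a quantitative slip: a vertex of $H$ has degree at least $\langle d \rangle^n/2$ in $G^n$ but only at least $\langle d \rangle^n/2 - 100$ within $H$, so the bound $\mathcal{D}_H^{-1/2} \preceq \sqrt{2/\langle d \rangle^n}\, I$ and the star estimate need adjusting. In short, since the paper simply quotes this result, the defensible options are to cite \cite{LaplacianER} likewise or to give a complete self-contained proof; your sketch identifies the right skeleton but leaves both load-bearing steps to the citation.
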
 
The proof of Proposition \ref{p:core} can be found in \cite{LaplacianER}.

When $\langle d \rangle ^n \geq \log^2 N $ a simpler statement can be made based on \cite{ERspectra2}.
\begin{prop}
\label{ERspectra2}
For an $\mathcal{G}_{ER}\left(N,p^n\right)$ graph with $\langle d \rangle ^n \geq \log^2 N$
\begin{align*}
\lambda^n =O_{p} \left(\frac{1}{\sqrt{\langle d \rangle^n}} \right).
\end{align*}	
\end{prop}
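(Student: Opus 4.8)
The plan is to reduce the bound on $\lambda^n$, the second-largest eigenvalue in absolute value of the normalized matrix $\mathcal{B}^n$, to a concentration estimate for the centered adjacency matrix, exploiting that the hypothesis $\langle d \rangle^n \geq \log^2 N$ forces the degree sequence to concentrate sharply around its mean.

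First I would establish degree concentration: for each vertex $i$, the degree $d^n(i)$ is a sum of $N-1$ independent Bernoulli$(p^n)$ variables with mean $\approx \langle d \rangle^n$, so a Chernoff/Bernstein estimate gives $|d^n(i)-\langle d \rangle^n| \leq C\sqrt{\langle d \rangle^n \log N}$ for a fixed vertex with probability at least $1-N^{-2}$. A union bound over the $N$ vertices then yields, with probability at least $1-N^{-1}$, the uniform control $\max_i |d^n(i)-\langle d \rangle^n| \leq C\sqrt{\langle d \rangle^n \log N}$. The point is that the \emph{relative} fluctuation satisfies $\sqrt{\langle d \rangle^n \log N}/\langle d \rangle^n = \sqrt{\log N/\langle d \rangle^n} \leq 1/\sqrt{\log N} \to 0$ precisely because $\langle d \rangle^n \geq \log^2 N$. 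Consequently $\left(\mathcal{D}^n\right)^{-1/2} = \left(\langle d \rangle^n\right)^{-1/2}\left(I+E\right)$ with $\|E\|_{op}=o(1)$ w.h.p., where $I$ is the identity, so $\mathcal{B}^n$ and $\frac{1}{\langle d \rangle^n}\mathcal{A}^n$ share the same spectrum up to a relative $(1+o(1))$ distortion.

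The crux is then a spectral-norm bound on the centered adjacency matrix, namely $\left\|\mathcal{A}^n-\mathbb{E}\mathcal{A}^n\right\|_{op} = O_p\left(\sqrt{\langle d \rangle^n}\right)$. I would obtain this either from the trace/moment method of F\"uredi--Koml\'os or from a modern nonasymptotic concentration inequality for symmetric matrices with independent bounded entries; the hypothesis $\langle d \rangle^n \geq \log^2 N$ is exactly what guarantees this estimate, since for sparser graphs isolated high-degree vertices would inflate the norm. Writing $\mathbb{E}\mathcal{A}^n = p^n(J-I)$ with $J$ the all-ones matrix, the expectation part is, up to the negligible $-p^n I$, the rank-one matrix $p^n J$, whose single nonzero eigenvalue is $p^n N \approx \langle d \rangle^n$ with the (nearly) all-ones eigenvector; this in turn approximates the Perron eigenvector $\left(\sqrt{d^n(i)}\right)_{i=1}^N$ of $\mathcal{B}^n$, which carries the eigenvalue $\lambda_1\left(\mathcal{B}^n\right)=1$ pinned by Perron--Frobenius.

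Finally I would assemble the pieces. Subtracting the rank-one Perron component from $\mathcal{B}^n$ leaves $\frac{1}{\langle d \rangle^n}\left(\mathcal{A}^n-\mathbb{E}\mathcal{A}^n\right)$ plus lower-order corrections arising from $E$ and from the small gap between the all-ones and the Perron directions; by Weyl's inequality the remaining eigenvalues $\lambda_2\left(\mathcal{B}^n\right),\dots,\lambda_N\left(\mathcal{B}^n\right)$ are all bounded in absolute value by $\frac{1}{\langle d \rangle^n}\left\|\mathcal{A}^n-\mathbb{E}\mathcal{A}^n\right\|_{op}(1+o(1)) = O_p\left(1/\sqrt{\langle d \rangle^n}\right)$, which is the claimed bound on $\lambda^n$. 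I expect the spectral-norm concentration of the centered adjacency matrix to be the main obstacle, especially because the constant in front of $\sqrt{\langle d \rangle^n}$ must be uniform in $N$ for the $O_p$ conclusion; controlling the contribution of atypical high-degree vertices is exactly where the $\log^2 N$ threshold is indispensable.
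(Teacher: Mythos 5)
Your proposal and the paper part ways at the very first step: the paper does not prove this proposition at all. Like Proposition \ref{p:core}, it is imported from the literature --- the statement is introduced with ``a simpler statement can be made based on \cite{ERspectra2}'', and that citation is the entire argument. Your sketch, by contrast, is a genuine self-contained proof in the standard random-matrix style, and it is essentially sound: Chernoff plus a union bound gives $\max_i \left|d^n(i)-\langle d \rangle^n\right| = O\left(\sqrt{\langle d \rangle^n \log N}\right)$ w.h.p., so $\left(\mathcal{D}^n\right)^{-1/2} = \left(\langle d \rangle^n\right)^{-1/2}(I+E)$ with $\|E\| = o(1)$; writing $\mathcal{A}^n = p^n(J-I) + X$ with $\|X\| = O_{p}\left(\sqrt{\langle d \rangle^n}\right)$, the matrix $\mathcal{B}^n$ becomes a rank-one positive-semidefinite matrix proportional to $\left[(I+E)\mathbf{1}\right]\left[(I+E)\mathbf{1}\right]^{\top}$ plus a perturbation of operator norm $O_{p}\left(1/\sqrt{\langle d \rangle^n}\right)$, and Weyl's inequality bounds $\lambda_2\left(\mathcal{B}^n\right)$ and $-\lambda_N\left(\mathcal{B}^n\right)$ by that norm --- note you do not even need to compare the all-ones direction with the Perron vector, since the rank-one piece has second and smallest eigenvalues equal to zero. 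Two caveats. First, the step you correctly flag as the crux cannot be settled by your first proposed tool: the original F\"uredi--Koml\'os trace method (and even Vu's refinement, which needs average degree of order $\log^4 N$) does not reach down to $\langle d \rangle^n \asymp \log^2 N$, so you must cite a Feige--Ofek-type or Bandeira--van Handel-type bound, which does give $\left\|\mathcal{A}^n-\E \mathcal{A}^n\right\| = O_{p}\left(\sqrt{\langle d \rangle^n}\right)$ once $\langle d \rangle^n \geq C\log N$. Second, your phrase ``share the same spectrum up to a relative $(1+o(1))$ distortion'' needs justification, since $(I+E)\,\mathcal{A}^n\,(I+E)$ is a congruence rather than a similarity; either invoke Ostrowski's theorem or, more simply, absorb $E$ into the error term as above. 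With these repairs your route closes, and what it buys over the paper is transparency about where the $\log^2 N$ threshold enters (it kills the relative degree fluctuation $\sqrt{\log N/\langle d \rangle^n}$ and places the graph in the regime where the norm bound is valid); what the paper's citation buys is brevity and hypotheses exactly matched to the statement.
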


\section{Markov processes on graphs}
\label{c:markov}

This section describes the dynamics.

Assume a graph $G^n$ is given. Each vertex is in a state from a finite state space $\S$. $\xi_{i,s}^n(t)$ denotes the indicator that vertex $i$ is in state $s$ at time $t$; the corresponding vector notation is $$\xi_i^n(t)=\left(\xi_{i,s}^n(t)\right)_{s \in \S}.$$

Our main focus is to describe the behavior of the average
$$ \bar{\xi}^n(t):=\frac{1}{N}\sum_{i=1}^N \xi_i^n(t). $$
$\bar{\xi}_s^n(t)$ can be interpreted as the ratio of vertices in state $s \in \S$ at time $t$. It is worth noting that both $\xi_i^n(t)$ and $\bar{\xi}^n(t)$ lie inside the simplex
$$ \Delta^{\S}:= \left \{  v \in [0,1]^{\S}  \left | \sum_{s \in \S}v_s=1 \right.  \right \}. $$

Let
$$ V_s^n(t):=\left \{ i \in [N] \left | \xi_{i,s}^n(t)=1 \right. \right\}$$
denote the set of vertices in state $s$ at time $t$. The normalized number of edges between vertices in state $s$ and $s'$ is denoted by
$$ \nu_{ss'}^n(t):= \frac{e \left(V_s^n(t), V_{s'}^n(t)\right)}{N \bar{d}^n}=\frac{1}{N \bar{d}^n}\sum_{i=1}^N \sum_{j=1}^N a_{ij}^n \xi_{i,s}^n(t)\xi_{j,s'}^n(t). $$
We may also reformulate the ratio $\bar{\xi}_s^n(t)$ as
$$ \bar{\xi}_{s}^n(t)= \frac{\left|V_s^n(t) \right|}{N}. $$

Each vertex may transition to another state in continuous time. The transition rates of a vertex may depend on the number of the neighbors of that vertex in each state. For vertex $i$, the number of its neighbors in state $s$ is
$$\sum_{j=1}^{N}a_{ij}\xi_{j,s}^n(t).$$

We introduce the normalized quantity
$$ \phi_{i,s}^n(t):= \frac{1}{\bar{d}^n}\sum_{j=1}^N a_{ij}\xi_{j,s}^n(t), $$
with corresponding vector notation $\phi_i^n(t):= \left( \phi_{i,s}^n(t)\right)_{s \in \S}$. Note that 
\begin{align}
\label{eq:phi_norm}
 \left \| \phi_{i}^n(t) \right \|_{1}=\frac{1}{\bar{d}^n}\sum_{j=1}^N a_{ij}^n \underbrace{\sum_{s \in \S}\xi_{i,s}^n(t)}_{=1}=\frac{d^n(i)}{\bar{d}^n}.
\end{align}
Typically $\phi_i^n(t) \notin \Delta^{\S}.$

Transition rates are described by the functions $q_{ss'}: \mathbb{R}^{\S} \to \mathbb{R}$. With slightly unconventional notation, $q_{ss'}$ will refer to the transition rate from state $s'$ to $s$. This convention enables us to work with column vectors and multiplying by matrices from the left. The matrix notation $Q(\phi)= \left(q_{ss'}(\phi) \right)_{s,s' \in \S}$ will be used.

We require $q_{ss'}(\phi) \geq 0$ for $s \neq s'$ for non-negative inputs $\phi \geq 0$. For the diagonal components,
$$q_{ss}=-\sum_{s' \neq s}q_{s's}$$
corresponds to the outgoing rate from state $s$.

The dynamics of $\left(\xi_{i}^n(t)\right)_{i=1}^{N}$ is a continuous-time Markov chain with state-space $\S^{N}$ where each vertex performs transitions according to the transition matrix $Q\left(\phi_{i}^n(t) \right)$, independently from the others. After a transition, vertices update their neighborhood vectors $\phi_i^n(t)$. This means that, at least for a single transition, each vertex is affected only by its neighbors. We call such dynamics local-density dependent Markov processes.

Our main assumption is that the rate functions $q_{ss'}$ are affine (linear, also allowing a constant term), meaning there are constants $q_{ss'}^{(0)}, \left(q_{ss',r}^{(1)} \right)_{r \in \S}$ such that
$$ q_{ss'}(\phi)=q_{ss'}^{(0)}+\sum_{r \in \S}q_{ss',r}^{(1)}\phi_r. $$

From the non-negative assumption it follows that these coefficients are non-negative. Let $q_{\max}^{(1)}$ denote the maximum of $\left |q_{ss',r}^{(1)} \right |$. 

From this definition,
\begin{align*}
& \frac{\d}{\d t}\E \left( \left. \xi_i^n(t) \right | \mathcal{F}_t \right)=Q \left( \phi_i^n(t) \right)\xi_i^n(t),
\end{align*}
or, writing it out for each coordinate,
\begin{align}
\label{eq:condexp1}
\begin{split}
\frac{\d}{\d t}\E \left( \left. \xi_{i,s}^n(t) \right | \mathcal{F}_t \right)=& \sum_{s' \in \S}q_{ss'}\left(\phi_{i}^n(t)\right)\xi_{i,s'}^n(t)=\\
&\sum_{s' \in \S}q_{ss'}^{(0)}\xi_{i,s'}^n(t)+\sum_{s' \in \S}\sum_{r \in \S}q_{ss',r}^{(1)}\phi_{i,r}^n(t) \xi_{i,s'}^n(t).
\end{split}
\end{align}
Using the identity
\begin{align*}
& \frac{1}{N}\sum_{i=1}^N \phi_{i,r}^n(t)\xi_{i,s'}^n(t)= \frac{1}{N \bar{d}^n}\sum_{i=1}^N \sum_{j=1}^N a_{ij}^n \xi_{i,s'}^n(t)\xi_{j,r}^n(t)=\nu_{s'r}^n(t),
\end{align*}
after taking the average in \eqref{eq:condexp1} with respect to $i$ we get that the rate at which the ratio $ \bar{\xi}_{s}^n(t)$ changes can be calculated as
\begin{align}
\label{eq:condexp2}
& \frac{\d}{\d t} \E \left( \left. \bar{\xi}_s^n(t) \right | \mathcal{F}_t \right)=\underbrace{ \sum_{s' \in \S}q_{ss'}^{(0)}\bar{\xi}_{s'}^n(t)+\sum_{s' \in \S}\sum_{r \in \S}q_{ss',r}^{(1)}\nu_{s'r}^n(t)}_{:=g_s\left( \bar{\xi}^n(t), \nu^n(t) \right)}.
\end{align}

\subsection*{Examples}

Next we give some examples for Markov processes on graphs, with special focus on epidemiological ones.

Conceptually the easiest epidemiological model is the SIS model. The state space is $\S=\{S,I\}$, $S$ for susceptible and I for infected individuals. The transition rates are
\begin{align*}
q_{SI}(\phi)=&\gamma, \\
q_{IS}(\phi)=&\beta \phi_I,
\end{align*}
meaning susceptible individuals are cured with constant rate and susceptible individuals become infected with rate proportional to the number of their infected neighbors.

The SIR model describes the situation when cured individuals cannot get reinfected. The state space is $\S=\{S,I,R\}$, including $R$ for recovered individuals. The dynamics is modified as
\begin{align*}
	q_{RI}(\phi)=&\gamma, \\
	q_{IS}(\phi)=&\beta \phi_I.
\end{align*}

The SI model describes the situation when there is no cure. This might be a realistic model for the diffusion of information. It is the special case of either SIS or SIR with $\gamma=0$. In this paper, we will regard it as a special case of SIR, allowing for a state R which basically acts as an isolated state. (This approach will be useful for counterexamples.)

For later use, we also introduce notation for terms of order 2 and 3:
\begin{align*}
 [A,B]^n(t):=&\E \left[\sum_{i =1}^N \sum_{j=1}^Na_{ij}^n \xi_{i,A}^n(t)\xi_{j,B}^n(t) \right ]\quad \textrm{and} \\
 [A,B,C]^n(t):=&\E \left[\sum_{i =1}^N \sum_{j=1}^N \sum_{k=1}^Na_{ij}^n a_{jk}^n \xi_{i,A}^n(t)\xi_{j,B}^n(t) \xi_{k,C}^n(t) \right ]
\end{align*} 
denote the expected number of $AB$ pairs and $ABC$ triples. Note that
$$ \nu_{SI}^n(t)=\frac{[SI]^n(t)}{N \bar{d}^n}. $$

 According to Theorem 4.4 in \cite{SimonBook} for the SI process 
\begin{align}
\label{eq:simon}
\frac{\d}{\d t}[SI]^n(t)=\frac{\beta}{\bar{d}^n} \left([SSI]^n(t)-[ISI]^n(t)-[SI]^n(t) \right).	
\end{align}

We also introduce an auxiliary model called the degree process. The state space is $\S=\{a,b\}$ and the only transition, $a\to b$, with rate
\begin{align*}
& q_{ba}(\phi)=\phi_a+\phi_b=\left\| \phi \right \|_1=\frac{d^n(i)}{\bar{d}^n}.
\end{align*}
Since the state of the neighbors does not influence the transition rate, the evolution of the vertices is independent from each other.

\section{Homogeneous mean field approximation}
\label{s:HMFA}

The evolution of the Markov processes introduced in Section  \ref{c:markov}  could be described a system of linear ODEs given by Kolmogorov's forwards equation in principle. However, as the state space is $\mathcal{S}^N $, solving said system is not viable even for relatively small values of $N$. A remedy for this problem is to assume interactions are well mixed so that the dynamics could be described by a few macroscopic averages then derive equations for the reduced system. This is what he homogeneous mean-field approximation  (HMFA) hopes to achieve whit and ODE system whit $|\mathcal{S}|$ variables.     

HMFA is based on the following two assumptions:
\begin{itemize}
	\item Low variance: $\bar{\xi}^n_s(t)$ is close to deterministic when $N$ is large.
	\item The graph is well-mixed, discrepancies are low.
\end{itemize}
We present an intuitive derivation of the governing ODE using these two assumptions. We replace $\nu_{s'r}^n(t)$ by $\bar{\xi}_{s'}^n(t)\bar{\xi}_{r}^n(t)$ in \eqref{eq:condexp2} based on the well-mixed assumption to get
\begin{align*}
& f_s\left(\bar{\xi}_{s}^n(t) \right):=\sum_{s' \in \S}q_{ss'}^{(0)}\bar{\xi}_{s'}^n(t)+\sum_{s' \in \S}\sum_{r \in \S}q_{ss',r}^{(1)}\bar{\xi}_{s'}^n(t)\bar{\xi}_{r}^n(t),
\end{align*}
with corresponding vector notation
$$f\left(\bar{\xi}^n(t)\right)=Q \left( \bar{\xi}^n(t) \right) \bar{\xi}^n(t).$$
$f$ is Lipschitz continuous in $\ell^1$ norm on the simplex $\Delta^{\S}$; its constant is denoted by $L_{f}$.

The error arising from this approximation can be bounded from above by $\partial^n$ due to
\begin{align}
\label{ineq:del}
\begin{split}
\left | \nu_{s'r}^n(t)-\bar{\xi}_{s'}^n(t)\bar{\xi}_{r}^n(t) \right | =&\left| \frac{e \left(V_{s'}^n(t),V_r^n(t) \right)}{N \bar{d}^n}-\frac{\left|V_{s'}^n(t) \right|}{N} \frac{\left|V_{r}^n(t) \right|}{N} \right |\\
=&\left| \delta\left(V_{s'}^n(t),V_{r}^n(t)\right) \right| \leq \partial^n.
\end{split}
\end{align}

Using the low variance assumption, we replace the ratio $\bar{\xi}_s^n(t)$ with a deterministic quantity $u_s(t)$. Based on the second assumption and \eqref{eq:condexp2}, $u(t)$ must satisfy the system of ODEs
\begin{align}
\label{eq:u}
& \frac{\d}{\d t}u(t)=f \left(u(t) \right).
\end{align}

This system of ODEs satisfy the the following existence uniqueness and positivity properties
\begin{lemma}
\label{l:ODE}	
	
Assume $u(0) \in \Delta^{\S}$. Then there is a unique global sollution to \eqref{eq:u} such that $u(t) \in \Delta^{\S}$ for all $t \geq 0.$ 
\end{lemma}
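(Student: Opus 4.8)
The plan is to split the statement into three pieces: local existence and uniqueness, forward invariance of $\Delta^{\S}$, and global existence. For local existence and uniqueness I would simply note that $f(u)=Q(u)u$ is a polynomial (in fact quadratic) in $u$, since each $q_{ss'}$ is affine; hence $f$ is $C^1$, and the paper already records that $f$ is Lipschitz on $\Delta^{\S}$ with constant $L_f$. Picard--Lindel\"of then gives a unique solution on a maximal interval $[0,T_{\max})$ starting from $u(0)\in\Delta^{\S}$.

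The heart of the argument is showing that $\Delta^{\S}$ is forward invariant, so that the solution never leaves the simplex while it exists. I would verify this through two tangency checks on the boundary. The affine constraint $\sum_{s}u_s=1$ is preserved because the columns of $Q$ sum to zero: from $q_{ss}=-\sum_{s'\neq s}q_{s's}$ one gets $\sum_{s}q_{ss'}(u)=0$ for every $s'$, so $\sum_s f_s(u)=\sum_{s'}u_{s'}\sum_s q_{ss'}(u)=0$, whence $\frac{\d}{\d t}\sum_s u_s(t)=0$ and $\sum_s u_s(t)=\sum_s u_s(0)=1$ for all $t$. Non-negativity is the subtler point: on a face $\{u_s=0\}$ of the non-negative orthant, with the remaining coordinates non-negative, the diagonal term drops out and $f_s(u)=\sum_{s'\neq s}q_{ss'}(u)\,u_{s'}\geq 0$, using precisely the standing assumption that $q_{ss'}(\phi)\geq 0$ for $s\neq s'$ and $\phi\geq 0$. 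This quasi-positivity says the vector field points into (or is tangent to) the orthant along each bounding face, so no trajectory starting inside can cross it; since uniqueness is already in hand, I would invoke Nagumo's sub-tangentiality criterion to conclude invariance of $\Delta^{\S}$.

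Global existence then comes for free: the solution stays in the compact set $\Delta^{\S}$ on all of $[0,T_{\max})$, hence cannot blow up in finite time, and the standard continuation theorem forces $T_{\max}=\infty$.

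I expect the non-negativity half of the invariance step to be the main obstacle; the sum-to-one identity is routine algebra from the generator structure, whereas keeping the trajectory inside the non-negative orthant needs the boundary analysis above. If one prefers not to cite Nagumo's theorem directly, I would argue by perturbation: replace $f$ by $f+\varepsilon(\mathbf{1}/|\S|-u)$, which preserves the hyperplane $\sum_s u_s=1$ on $\Delta^{\S}$ and strictly points inward on every face (the added $s$-component equals $\varepsilon/|\S|>0$ there), deduce strict positivity of the perturbed solution by ruling out a first boundary-touching time, and let $\varepsilon\to 0$ using continuous dependence on parameters. Either route reduces the whole matter to the single clean inequality $f_s(u)\geq 0$ whenever $u\geq 0$ and $u_s=0$.
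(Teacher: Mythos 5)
Your proof is correct, but it follows a genuinely different route from the paper's. You argue via classical deterministic ODE theory: Picard--Lindel\"of for local existence and uniqueness, the column-sum-zero identity to keep the solution on the hyperplane $\sum_s u_s=1$, quasi-positivity ($f_s(u)\geq 0$ whenever $u\geq 0$ and $u_s=0$) together with Nagumo's sub-tangentiality criterion (or your $\varepsilon$-perturbation variant) for forward invariance of $\Delta^{\S}$, and compactness of the simplex to rule out finite-time blow-up. The paper instead takes a probabilistic detour: it first modifies the rates, replacing $\phi_r$ by $\left|\phi_r\right|$ in the affine functions $q_{ss'}$, so that the off-diagonal rates are non-negative for \emph{all} arguments rather than only non-negative ones; it then introduces an auxiliary time-inhomogeneous Markov chain with generator $\hat{Q}(\hat{u}(t))$ and shows, via a Gr\"onwall uniqueness argument for the linear non-autonomous Kolmogorov equation, that the ODE solution coincides with the chain's marginal law $p(t)$. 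Since $p(t)$ is automatically a probability vector, invariance of the simplex and boundedness (hence global existence) come for free, with no boundary analysis at all. The trade-off is instructive: your route needs the tangent-cone/boundary argument (or the perturbation limit) but stays entirely within standard ODE theory and needs no modification of the vector field; the paper's route avoids any boundary analysis but must introduce the modified field $\hat{f}$ precisely because, before invariance is established, $\hat{u}(t)$ could a priori leave the orthant, where the original $q_{ss'}$ may be negative and hence fail to be valid transition rates --- a point your argument sidesteps by only ever evaluating $q_{ss'}$ at points of $\Delta^{\S}$. Both proofs are complete; if you cite Nagumo, just make sure the sub-tangentiality is checked with respect to $\Delta^{\S}$ itself (tangency to the hyperplane plus inward pointing on the faces of the orthant), which is exactly what your two checks combine to give.
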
.

As the coordinates $\left(u_s(t) \right)_{s \in \mathcal{S}}$ are linearly dependent only $|\mathcal{S}|-1$  ODEs need to be solved in practice.

For the degree process \eqref{eq:u} takes the form 
\begin{align*}
& \frac{\d}{\d t}u_a(t)=-u_a(t),
\end{align*}
while for the SIR process
\begin{align*}
& \frac{\d}{\d t} u_S(t)=-\beta u_I(t)u_{S}(t)\\
& \frac{\d}{\d t}u_I(t)=\beta u_I(t)u_{S}(t)-\gamma u_I(t).
\end{align*}

\begin{definition}
We say the HMFA is \emph{accurate for the graph sequence} $\left(G^n \right)_{n=1}^{\infty}$ if the following holds: we fix any arbitrary linear model $Q: \mathbb{R}^{\S \times \S} \to \mathbb{R}^\S$ and asymptotic initial condition $u(0) \in \Delta^{\S}$. $\left( \xi_{i}^n(0) \right)_{n=1}^{\infty}$ is an arbitrary sequence of initial conditions such that $\bar{\xi}^n(0) \to u(0)$. Then for any $T>0$
$$\lim_{n \to \infty}\sup_{0 \leq t \leq T}\left \| \bar{\xi}^n(t)-u(t) \right \|_1=0 \ \ st. $$ 
Otherwise we say the HMFA is not accurate or inaccurate for the graph sequence.
\end{definition}

Note that we implicitly assumed $N(n) \to \infty$, otherwise $\bar{\xi}^n(t)$ would remain stochastic thus the deterministic approximation would trivially contain some non-vanishing error.

The requirement for the HMFA to work for any linear $Q$ is somewhat restrictive, as there may be cases when the HMFA is accurate for some processes but not for others. Notably for $Q(\phi)=Q^{(0)}$ constant, the vertices are independent, hence HMFA works for any $\left( G^n\right)_{n=1}^{\infty}$ based on the law of large numbers. We wish to exclude these pathological cases by requiring convergence for all $Q$.

Similarly, it may be possible that HMFA works for some sequence of initial conditions but not for others. For example, in the SIS model starting the epidemics with $0$ infected individual results in the same limit for both the exact stochastic process and the ODE, regardless of the graph sequence. It is also possible that the stochastic process exhibits wildly different behavior for different initial conditions $(\xi_i^n(0))_{i=1}^{\infty}$ and $\left( \left(\xi_i^{n}\right)'(0) \right)_{i=1}^n$ while $\bar{\xi}^n(0)$ and $\left(\bar{\xi}^n\right)(0)'$ converge to the same $u(0)$, rendering the ODE unable to distinguish between the two cases.

This can be illustrated by the following example: Let $G^n$ be the star graph with $N(n)=n$ vertices and $i=1$ being the hub. For the SI process if we choose the initial condition with no infection, then $\bar{\xi}^n(t)$ and $u(t)$ will lead to the same conclusion. However, when we only infect $i=1$ leaving the rest susceptible, then $i=1$ will stay in that state forever while the leaves are infected independently with rate $\frac{\beta}{\bar{d}^n}\approx \frac{\beta}{2},$ thus 
$$ \bar{\xi}_{I}^n(t) \to 1-e^{-\frac{\beta}{2}t} \ \ st., $$  
while $ \bar{\xi}_{I}^n(0)=\frac{1}{n} \to 0 =:u_{I}(0).$

In general, for non quasi-random graph sequences, the initial condition can be selected in a similar manner to conclude that HMFA is not accurate.


\section{Results}
\label{s:results}

The central claim of this paper is the following:
\begin{theorem}[Main]
	\label{t:main}
	
For a graph sequence $(G^n)_{n=1}^{\infty}$ the HMFA is accurate if and only if said sequence is quasi-random.
\end{theorem}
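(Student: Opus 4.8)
\section*{Proof proposal}

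The statement is an equivalence, so I would prove the two implications separately. The ``if'' direction (quasi-random $\Rightarrow$ accurate) is an analytic stability estimate, while the ``only if'' direction is proved by contraposition through explicit counterexamples.

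\textbf{Sufficiency.} Assume $\partial^n\to0$. I would start from the exact semimartingale decomposition of the jump process $\bar\xi^n$,
$$\bar\xi^n(t)=\bar\xi^n(0)+\int_0^t g\!\left(\bar\xi^n(s),\nu^n(s)\right)\d s+M^n(t),$$
where $g$ is the drift from \eqref{eq:condexp2} and $M^n$ is a martingale whose jumps are $O(1/N)$; since there are $O(N)$ jumps per unit time its predictable quadratic variation is $O(1/N)$, so by Doob's inequality $\sup_{0\le t\le T}\|M^n(t)\|_1=O_{\pr}(1/\sqrt N)$. Subtracting \eqref{eq:u} and splitting the drift difference as $g(\bar\xi^n,\nu^n)-f(u)=[g(\bar\xi^n,\nu^n)-f(\bar\xi^n)]+[f(\bar\xi^n)-f(u)]$, the first bracket is controlled coordinatewise by \eqref{ineq:del} (each factor $\nu^n_{s'r}-\bar\xi^n_{s'}\bar\xi^n_r$ is a discrepancy, hence $\le\partial^n$ in absolute value) together with the coefficients $q^{(1)}$, giving a bound $C\partial^n$; the second bracket is $\le L_f\|\bar\xi^n-u\|_1$ by Lipschitz continuity of $f$. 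Grönwall's inequality then yields $\sup_{0\le t\le T}\|\bar\xi^n(t)-u(t)\|_1\le\big(\|\bar\xi^n(0)-u(0)\|_1+C\partial^nT+\sup_{0\le t\le T}\|M^n(t)\|_1\big)e^{L_fT}$, i.e. an explicit error of order $\tfrac1{\sqrt N}+\partial^n$ that vanishes. This is the analytic core and is essentially routine.

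\textbf{Necessity.} By contraposition, assume $\limsup_n\partial^n>0$ and pass to a subsequence on which $\partial^n\ge c>0$. I would split into two cases according to the degree discrepancy. If $\limsup\partial_*^n>0$, use the \emph{degree process}: start every vertex in state $a$, so vertices evolve independently and $\bar\xi_a^n(t)$ concentrates (variance $O(1/N)$) around $\tfrac1N\sum_i e^{-d^n(i)t/\bar d^n}$, whereas $u_a(t)=e^{-t}$. A tangent-line (strict convexity) estimate at $1$, together with the identity $\tfrac1N\sum_{i\in V_-^n}(1-d^n(i)/\bar d^n)=\partial_*^n$ extracted from \eqref{eq:del*_nice} and Cauchy--Schwarz, gives a Jensen gap $\ge\tfrac{t^2e^{-t}}{2}(\partial_*^n)^2$, bounded away from $0$ along the subsequence, so accuracy fails. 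If instead $\partial_*^n\to0$, then by Lemma~\ref{l:hierarchy} we still have $\limsup\partial_1^n>0$, so there are sets $A_n$ with $|\delta(A_n,A_n)|\ge c'$; take the SI process with $V_I^n(0)=A_n$. Additivity gives $\delta(A_n,A_n^c)=\delta(A_n,[N])-\delta(A_n,A_n)$ with $|\delta(A_n,[N])|\le\partial_*^n\to0$, so the initial drift $\tfrac{\d}{\d t}\E\bar\xi_I^n(0)=\beta\,\nu_{SI}^n(0)$ differs from the ODE drift $\beta u_I(0)u_S(0)$ by $\beta|\delta(A_n,A_n^c)|\ge\beta(c'-o(1))$.

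\textbf{Main obstacle.} The delicate point is the SI case: an instantaneous drift discrepancy at $t=0$ must be upgraded to a \emph{persistent} gap $\sup_{0\le t\le T}\|\bar\xi^n(t)-u(t)\|_1\not\to0$, and unlike the degree process the SI dynamics are interactive. Assuming accuracy, the uniform $L^1$-convergence plus the low-variance bound forces $\E\nu_{SI}^n\rightharpoonup u_Iu_S$ weakly on $[0,T]$ (test against increments: $\E\bar\xi_I^n(t_2)-\E\bar\xi_I^n(t_1)=\beta\int_{t_1}^{t_2}\E\nu_{SI}^n$); to contradict this I must show $\E\nu_{SI}^n(s)$ cannot collapse instantaneously from its initial value, i.e. bound the relaxation of $\nu_{SI}^n$ over a short interval. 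The natural estimate counts infection events: each $S\to I$ flip changes $\nu_{SI}^n$ by at most $d^n(i)/(N\bar d^n)$, so the total variation of $\nu_{SI}^n$ on $[0,\tau]$ is tied to the degrees of the vertices that flip. The genuine difficulty is that $\partial_*^n\to0$ does \emph{not} bound the maximum degree, so this relaxation rate need not be uniformly small; controlling it --- e.g. by restricting to the few earliest infections, or by truncating high-degree vertices in a way that leaves the discrepancy intact --- is the technical heart of the argument. Once a fixed $\tau$ and threshold are secured, the weak-convergence contradiction closes the proof.
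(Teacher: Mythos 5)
Your sufficiency argument is correct and is essentially the paper's own proof (the paper realizes your martingale $M^n$ concretely via a Poisson-process representation and Doob's inequality, Lemmas~\ref{l:grönwall} and \ref{l:fluctuations}, then applies Gr\"onwall exactly as you do). Your first necessity branch ($\limsup_n \partial_*^n>0$) is also correct, and it is a genuinely different and arguably cleaner route than the paper's: you start all vertices in state $a$, compute $\E\bar\xi_a^n(t)=\frac1N\sum_i e^{-t d^n(i)/\bar d^n}$ exactly, and lower-bound the Jensen gap against $e^{-t}$ by $\frac{t^2e^{-t}}{2}(\partial_*^n)^2$ (restricting the strong-convexity estimate to $V_-^n$, where $d^n(i)/\bar d^n\in[0,1)$, is the right move, since $\phi''$ is not bounded below for large degrees). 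This gives a gap at every fixed $t>0$ and needs no Taylor machinery, whereas the paper (Lemmas~\ref{l:techical} and \ref{l:del*}) infects only $V_-^n$, matches first derivatives at $t=0$, bounds second derivatives, and gets a gap only for small $t$; your version trades that bookkeeping for a quantified Jensen inequality.

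The genuine gap is in the second necessity branch, $\partial_*^n\to 0$, and you have named it yourself: upgrading the initial drift discrepancy $\beta|\delta(A_n,A_n^c)|$ to a persistent gap when maximum degrees are unbounded. That step is not a deferrable technicality --- it is the actual content of the paper's Theorem~\ref{t:negative}, and your proposal stops exactly where it begins. The paper resolves it in three pieces. First, Lemma~\ref{l:thining}: with $D^n=\{i: d^n(i)\le 2\bar d^n\}$, the thinned sets $A^n\cap D^n$, $B^n\cap D^n$ still carry non-vanishing discrepancy, because when $\partial_*^n\to0$ the high-degree vertices are incident to a vanishing fraction of all edges, $e\left(\bar D^n,[N]\right)/(N\bar d^n)\le \left|\delta\left(\bar D^n,[N]\right)\right|+\left|\bar D^n\right|/N\to 0$. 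Second --- and this is the point your sketch misses structurally --- the counterexample is \emph{not} $V_I^n(0)=A_n$, $V_S^n(0)=A_n^c$: the paper puts $V_S^n(0)=A_D^n$, $V_I^n(0)=B_D^n$ and all remaining vertices in the absorbing state $R$ of the SIR/SI model, so that for all $t$ both $V_S^n(t)$ and $V_I^n(t)$ stay inside $D^n$; without this, susceptible vertices of huge degree could flip and your relaxation bound on $\nu_{SI}^n$ fails. Third, with that confinement, \eqref{eq:simon} bounds $\frac{\d}{\d t}[SI]^n$ by $[SSI]^n+[ISI]^n+[SI]^n$, each $O\left(N(\bar d^n)^2\right)$ on $D^n$, which is precisely the uniform second-derivative bound \eqref{eq:tech1} needed to feed the Taylor-type Lemma~\ref{l:techical} (your ``weak-convergence'' framing would need the same input). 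A further small case you do not cover, and which the confinement argument requires ($\bar d^{n_k}\ge \frac12$), is $\liminf_n \bar d^n<1$; the paper disposes of it separately in Lemma~\ref{l:degree} with isolated infected vertices. So: right architecture, one branch proved by a nice alternative argument, but the technical heart of the theorem is acknowledged rather than proved.
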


The following theorem shows the $\Leftarrow$ direction of Theorem \ref{t:main} and provides a quantitative error bound which can be used for non-quasi-random graph-sequences as well or even concrete graphs.
\begin{theorem}
\label{t:positive}

For all $T>0$
\begin{align*}
& \sup_{0 \leq t \leq T} \left \|\bar{\xi}^n(t)-u(t) \right \|_1 \leq \left(  \left \|\bar{\xi}^n(0)-u(0) \right \|_1+O_{p} \left( \frac{1}{\sqrt{N}} \right)+C(T)\partial^n \right)e^{L_f T},
\end{align*}
where $C(T)=q_{\max}^{(1)}\left | \S \right|^3 T.$
\end{theorem}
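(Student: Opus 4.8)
The plan is to compare the true dynamics with the ODE \eqref{eq:u} by a Grönwall argument, separating the error into the initial gap, a martingale fluctuation of size $O_p\!\left(N^{-1/2}\right)$, and a topological error controlled by $\partial^n$.

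First I would write the semimartingale (Dynkin) decomposition of the empirical averages. Since $\left(\xi_i^n(t)\right)_{i=1}^N$ is a continuous-time Markov chain and $\bar{\xi}_s^n(t)=\frac1N\sum_{i}\xi_{i,s}^n(t)$, relation \eqref{eq:condexp2} identifies $g_s\!\left(\bar{\xi}^n(t),\nu^n(t)\right)$ as the instantaneous drift, so
\[
M_s^n(t):=\bar{\xi}_s^n(t)-\bar{\xi}_s^n(0)-\int_0^t g_s\!\left(\bar{\xi}^n(\tau),\nu^n(\tau)\right)\d\tau
\]
is a martingale. Subtracting the integrated ODE $u(t)=u(0)+\int_0^t f\!\left(u(\tau)\right)\d\tau$ gives
\[
\bar{\xi}^n(t)-u(t)=\left(\bar{\xi}^n(0)-u(0)\right)+M^n(t)+\int_0^t\left[g\!\left(\bar{\xi}^n(\tau),\nu^n(\tau)\right)-f\!\left(u(\tau)\right)\right]\d\tau .
\]

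Second I would split the drift difference as $g\!\left(\bar{\xi}^n,\nu^n\right)-f(u)=\bigl[g\!\left(\bar{\xi}^n,\nu^n\right)-f\!\left(\bar{\xi}^n\right)\bigr]+\bigl[f\!\left(\bar{\xi}^n\right)-f(u)\bigr]$. By the definitions of $g$ and $f$ the first (topological) bracket equals $\sum_{s',r}q_{ss',r}^{(1)}\left(\nu_{s'r}^n-\bar{\xi}_{s'}^n\bar{\xi}_r^n\right)$ in coordinate $s$; bounding each factor by $q_{\max}^{(1)}$ and invoking \eqref{ineq:del}, its $\ell^1$ norm is at most $q_{\max}^{(1)}|\S|^3\partial^n$, which integrates to $C(T)\partial^n$ on $[0,T]$. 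The second (Lipschitz) bracket is bounded by $L_f\left\|\bar{\xi}^n(\tau)-u(\tau)\right\|_1$. Writing $\varepsilon(t):=\left\|\bar{\xi}^n(t)-u(t)\right\|_1$ and $A:=\left\|\bar{\xi}^n(0)-u(0)\right\|_1+\sup_{0\le t\le T}\left\|M^n(t)\right\|_1+C(T)\partial^n$, the triangle inequality yields $\varepsilon(t)\le A+L_f\int_0^t\varepsilon(\tau)\d\tau$ for $t\le T$, and Grönwall's lemma gives $\sup_{0\le t\le T}\varepsilon(t)\le A\,e^{L_fT}$, which is precisely the claimed inequality once the martingale term is shown to be $O_p\!\left(N^{-1/2}\right)$.

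The remaining and main task is this martingale bound, where the one delicate point is that the per-vertex rates $q_{ss'}\!\left(\phi_i^n\right)$ are not uniformly bounded but grow with the degree through $\left\|\phi_i^n\right\|_1=d^n(i)/\bar{d}^n$. I would compute the predictable quadratic variation: each vertex transition changes $\bar{\xi}_s^n$ by $\pm\frac1N$, so $\langle M_s^n\rangle_t$ equals $\frac1{N^2}\int_0^t$ of the total rate of $s$-relevant transitions. Estimating each rate by $q_{ss'}\!\left(\phi_i^n\right)\le q_{\max}^{(0)}+q_{\max}^{(1)}\left\|\phi_i^n\right\|_1$ and using the \emph{deterministic} identity \eqref{eq:phi_norm}, namely $\sum_{i=1}^N\left\|\phi_i^n\right\|_1=\sum_{i}d^n(i)/\bar{d}^n=N$, the summed rate is $O(N)$ along every trajectory, whence $\langle M_s^n\rangle_t=O(t/N)$ deterministically. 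Doob's $L^2$ maximal inequality then gives $\E\!\left[\sup_{0\le t\le T}\left|M_s^n(t)\right|^2\right]\le 4\,\E\langle M_s^n\rangle_T=O(T/N)$, and summing over the finitely many $s\in\S$ produces $\sup_{0\le t\le T}\left\|M^n(t)\right\|_1=O_p\!\left(N^{-1/2}\right)$, completing the argument.
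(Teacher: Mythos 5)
Your proposal is correct, and its skeleton is exactly the paper's: the same three-way decomposition into initial error, a martingale fluctuation term, and a topological error term (your $M^n$ is precisely the paper's $U^n$, since the paper's ``conditional expectation'' is the integrated drift $\bar{\xi}^n_s(0)+\int_0^t g_s\,\d\tau$), the same bound $C(T)\partial^n$ on the topological term via \eqref{ineq:del}, and the same Gr\"onwall step (the paper's Lemma~\ref{l:grönwall}). The only place you genuinely diverge is the proof of the fluctuation bound (the paper's Lemma~\ref{l:fluctuations}): the paper uses the Kurtz-style random time-change representation \eqref{eq:Poisson}, bounds the random time arguments of the unit-rate Poisson processes by $Ct$ with $C=q_{\max}^{(0)}+q_{\max}^{(1)}$, and applies Doob's inequality to the submartingale $\left(\mathcal{N}_{ss'}^n(Nt)-Nt\right)^2$; you instead stay with the Dynkin martingale itself, bound its predictable quadratic variation by $O(t/N)$ deterministically, and apply Doob's $L^2$ maximal inequality. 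Both routes turn on the identical key observation — individual rates are unbounded because of high-degree vertices, but the identity \eqref{eq:phi_norm} makes the \emph{total} rate $O(N)$ pathwise — so they are equivalent in content. The paper's coupling is more constructive and standard in this literature (it literally realizes the process), while yours is slightly more abstract but avoids introducing the Poisson representation and works directly with the compensator; either one completes the theorem.
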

The first two terms are vanishing as $N(n) \to \infty$ and $\bar{\xi}^n(0) \to u(0)$ so the only nontrivial part is $O(\partial^n)$, which goes to $0$ when the graph sequence is quasi-random by definition. The term vanishes when $q_{\max}^{(1)}=0$ since in that case $Q(\phi)=Q^{(0)}$ is constant making vertices independent. It is worth mentioning that based on the proof of Lemma \ref{l:fluctuations}, $O_p\left(\frac{1}{\sqrt{N}} \right)$ only depends on $q_{\max}^{(0)},q_{\max}^{(1)},T$ and apart form $N(n)$, it is independent from the graph sequence.

For some random graphs we may bound $\partial^n$ with something as the same order as $\left(\bar{d}^n \right)^{-\frac{1}{2}}.$

\begin{theorem}[Discrepancy bound for Erd\H os--R\'enyi graphs]
\label{t:ER}

For a $\mathcal{G}_{\textrm{ER}}(N,p^n)$  Erdős-Rényi graphs sequence with $ \langle d \rangle^n \to \infty $ 
\begin{align}
\label{eq:ER_partial_bound}
& \partial^n=O_p \left( \frac{1}{\sqrt{\langle d \rangle^n }} \right).
\end{align}
\end{theorem}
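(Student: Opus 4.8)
The strategy is to bound the combinatorial discrepancy $\partial^n$ by routing through the degree-biased discrepancy $\tilde\partial^n$, which the spectral results of Section \ref{c:spectral} are tailored to control, and to absorb all degree inhomogeneity into the single quantity $\partial_*^n$. The backbone is Lemma \ref{l:vol1}, which gives
$$\partial^n \le \tilde\partial^n + 2\partial_*^n,$$
so it suffices to show separately that $\partial_*^n = O_p\!\left(1/\sqrt{\langle d \rangle^n}\right)$ and $\tilde\partial^n = O_p\!\left(1/\sqrt{\langle d \rangle^n}\right)$.

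First I would dispose of $\partial_*^n$ purely by degree concentration, with no spectral input. Each degree $d^n(i)$ is $\mathrm{Bin}(N-1,p^n)$ with mean $\langle d \rangle^n$ and variance at most $\langle d \rangle^n$, so $\E\,\left|d^n(i)-\langle d \rangle^n\right| \le \sqrt{\langle d \rangle^n}$; meanwhile $\bar d^n$ concentrates around $\langle d \rangle^n$ (its relative fluctuation is $O_p(1/\sqrt{N\langle d \rangle^n})$), so that $\bar d^n \ge \tfrac12\langle d \rangle^n$ w.h.p. Plugging these into the formula \eqref{eq:del*_nice} for $\partial_*^n$ and applying Markov's inequality yields $\partial_*^n = O_p\!\left(1/\sqrt{\langle d \rangle^n}\right)$, of exactly the target order.

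For $\tilde\partial^n$ I would split according to density. When $\langle d \rangle^n \ge \log^2 N$ (in particular in the ultradense case $\langle d \rangle^n > 0.99 N$), Proposition \ref{ERspectra2} gives $\lambda^n = O_p(1/\sqrt{\langle d \rangle^n})$, and \eqref{eq:spectral} directly yields $\tilde\partial^n \le \lambda^n = O_p(1/\sqrt{\langle d \rangle^n})$. The genuinely delicate regime is $c_0 \le \langle d \rangle^n < \log^2 N$ with $\langle d \rangle^n \to \infty$: here $\lambda^n$ itself is useless because localized eigenvectors supported near the (rare) high-degree vertices inflate the second eigenvalue $\lambda^n$. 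The remedy is to pass to the $core(G^n)$, i.e. the vertex set $H$ of Proposition \ref{p:core}, on which $\lambda_H^n \le c_1/\sqrt{\langle d \rangle^n}$ w.h.p. Applying the expander mixing lemma on $H$ gives $\tilde\partial_H^n \le \lambda_H^n$ (the fewer than $100$ external neighbours per retained vertex make the induced and global degrees agree to leading order), and Lemma \ref{l:vol2} transfers this back to the whole graph:
$$\tilde\partial^n \le \tilde\partial_H^n + 10\,\frac{vol(H^c)}{vol([N])} \le \frac{c_1}{\sqrt{\langle d \rangle^n}} + 10\,\frac{vol(H^c)}{vol([N])}.$$

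The main obstacle is thus controlling the removed volume $vol(H^c)/vol([N])$: Proposition \ref{p:core} bounds only the vertex count $|H^c| \le N e^{-\langle d \rangle^n / c_0}$, not the volume, and a crude bound via the maximal degree is too lossy when $\langle d \rangle^n$ grows slower than $\log N$. The clean resolution is the identity
$$\frac{vol(H^c)}{vol([N])} = \frac{e(H^c,[N])}{N\bar d^n} = \delta(H^c,[N]) + \frac{|H^c|}{N} \le \partial_*^n + \frac{|H^c|}{N},$$
using \eqref{eq:del*}. The right-hand side is $O_p(1/\sqrt{\langle d \rangle^n}) + e^{-\langle d \rangle^n/c_0}$, which both verifies the hypothesis $vol(H^c)/vol([N]) \le \tfrac12$ of Lemma \ref{l:vol2} for large $n$ and shows the removed volume is itself $O_p(1/\sqrt{\langle d \rangle^n})$. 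Combining the two regimes and reinserting into the backbone gives $\partial^n = O_p(1/\sqrt{\langle d \rangle^n})$; since the implied constants are taken as the maximum over the (at most two) cases, the single $O_p$ bound holds along the whole sequence.
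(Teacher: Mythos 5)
Your proposal is correct and takes essentially the same route as the paper's own proof: the same backbone $\partial^n \le \tilde{\partial}^n + 2\partial_*^n$ from Lemma \ref{l:vol1}, the same degree-concentration argument for $\partial_*^n = O_p\left(1/\sqrt{\langle d \rangle^n}\right)$ (the paper's Lemma \ref{l:ER}), the same core-based spectral bound via Proposition \ref{p:core}, \eqref{eq:spectral} and Lemma \ref{l:vol2}, the same use of Proposition \ref{ERspectra2} in the dense regime, and the same key identity $vol(H^c)/vol([N]) \le \partial_*^n + |H^c|/N$ to control the removed volume. The only cosmetic difference is where you cut the case analysis ($\log^2 N$ instead of the paper's $0.99N$), which is immaterial since the validity ranges of the two propositions overlap.
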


Similar results can be said about $\mathcal{G}_{\textit{reg}}(N,d^n)$ random regular graphs. Based on \eqref{eq:spectral},\eqref{eq:ramanujan} and \eqref{eq:ramanujan2}
\begin{align*}
& \partial^n=\tilde{\partial}^n \leq \lambda^n=O_p \left( \frac{1}{\sqrt{d^n}} \right),
\end{align*}
when the appropriate conditions hold.

The next theorem shows the $\Rightarrow$ direction for Theorem \ref{t:main}.
\begin{theorem}
\label{t:negative}
Let $\left(G^n\right)_{n=1}^{\infty}$ a non quasi-random graph sequence. Then the HMFA is not accurate for $\left(G^n\right)_{n=1}^{\infty}$.
\end{theorem}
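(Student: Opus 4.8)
I will argue by negating the definition of accuracy: it suffices to exhibit, along \emph{some} subsequence of $\left(G^n\right)$, a single linear model $Q$, a limiting initial condition $u(0) \in \Delta^{\S}$ and a sequence of initial states with $\bar{\xi}^n(0) \to u(0)$, together with a horizon $T>0$ and a constant $\varepsilon>0$, such that $\pr\left(\sup_{0 \leq t \leq T}\left\|\bar{\xi}^n(t)-u(t)\right\|_1 \geq \varepsilon\right)$ stays bounded away from $0$. Since $\left(G^n\right)$ is non quasi-random, I first pass to a subsequence on which $\partial^n \geq c > 0$. I then split according to the degree discrepancy: either (a) a further subsequence has $\partial_*^n \geq c' > 0$, or (b) $\partial_*^n \to 0$. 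These two cases generalize, respectively, the star example and the bipartite-matching example discussed in the text.

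In case (a) I use the degree process, whose vertices evolve independently. Starting every vertex in state $a$ (so $u(0)=(1,0)$ and $\bar{\xi}^n(0)=u(0)$ exactly), $\bar{\xi}_a^n(t)=\frac{1}{N}\sum_i \xi_{i,a}^n(t)$ is an average of independent indicators, hence concentrates around $\E \bar{\xi}_a^n(t)=\frac{1}{N}\sum_i e^{-t d^n(i)/\bar{d}^n}$ with $O_p(1/\sqrt{N})$ fluctuations by Hoeffding, while the HMFA gives $u_a(t)=e^{-t}$. By Jensen the mean always dominates $e^{-t}$, and the gap is controlled from below by the spread of the normalized degrees $d^n(i)/\bar{d}^n$ around $1$, i.e.\ by $\partial_*^n$. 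To make this quantitative I pass to a sub-subsequence along which the empirical law of $d^n(i)/\bar{d}^n$ converges weakly to a sub-probability measure on $[0,\infty)$ which, because $\partial_*^n \geq c'$ forbids the limit $\delta_1$ (escaping mass corresponds to hubs, on which $e^{-tx}\to 0$ and which therefore also contribute to the gap), is not a point mass at $1$; by uniqueness of Laplace transforms this produces a fixed $t_0$ with $\E\bar{\xi}_a^n(t_0)-e^{-t_0} \geq 2\varepsilon$, and concentration finishes case (a).

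In case (b) I use the SI process. By Lemma \ref{l:hierarchy} we have $\partial_1^n \geq \tfrac{2}{11}\partial^n$, so there exist sets $A^n$ with $|\delta(A^n,A^n)| \geq \kappa := \tfrac{2c}{11}>0$; I infect exactly $A^n$ at time $0$. Passing to a further subsequence I may assume $\bar{\xi}_I^n(0)=|A^n|/N \to \rho$ and set $u(0)=(1-\rho,\rho)$. The HMFA derivative at $0$ is $\beta(1-\rho)\rho$, whereas the true drift is $\frac{\d}{\d t}\E\bar{\xi}_I^n(0)=\beta\,\nu_{SI}^n(0)=\beta\left(\delta\left((A^n)^c,A^n\right)+\bar{\xi}_S^n(0)\bar{\xi}_I^n(0)\right)$; by additivity $\delta\left((A^n)^c,A^n\right)=\delta(A^n,[N])-\delta(A^n,A^n)$ with $|\delta(A^n,[N])| \leq \partial_*^n \to 0$, so the two drifts differ by at least $\beta\kappa-o(1)$. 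Thus $\E\bar{\xi}_I^n$ and $u_I$ separate to first order at $t=0$.

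The main obstacle is to promote this instantaneous mismatch into a separation that survives on a fixed interval, which requires (i) concentration of $\bar{\xi}^n(t)$ around its mean and (ii) a uniform bound on the curvature $\frac{\d}{\d t}\E\nu_{SI}^n$ so that the mean trajectory hugs its tangent line on a fixed $[0,t_0]$. Part (i) is the $O_p(1/\sqrt{N})$ estimate of Lemma \ref{l:fluctuations}. Part (ii) is the delicate point: via \eqref{eq:simon} the curvature is governed by the triple counts $[SSI]^n,[ISI]^n$, whose normalization involves the degree second moment $\frac{1}{N}\sum_i d^n(i)^2$, and this can diverge even when $\partial_*^n \to 0$. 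The key observation that rescues the argument is that $\partial_*^n \to 0$ forces the atypically high-degree vertices to be negligible: the vertices of degree exceeding $M\bar{d}^n$ number at most $\frac{2N\partial_*^n}{M-1}$ and, more importantly, carry only a fraction $\leq 2\partial_*^n$ of all edges, since $\frac{1}{2N\bar{d}^n}\sum_{d^n(i)>M\bar{d}^n}d^n(i) \leq \partial_*^n+o(1)$. Consequently, deleting these vertices perturbs every $\bar{\xi}_s^n$ by $o(1)$ and perturbs $\delta(A^n,A^n)$ by $o(1)$ (so the truncated witness set still has self-discrepancy $\geq \kappa/2$), while the total infection pressure transmitted across their incident edges is at most $\tfrac{\beta}{\bar{d}^n}\sum_{\mathrm{hubs}}d^n(i)=O(\beta N \partial_*^n)$, hence contributes $O(\beta\partial_*^n T)=o(1)$ to $\bar{\xi}_I^n$ over $[0,T]$. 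Up to $o(1)$ the dynamics is therefore the SI process on the induced subgraph of retained vertices, whose degree second moment is $\leq M(\bar{d}^n)^2$, giving a uniform curvature bound $K$. Expanding $\bar{m}_I^n(t)-u_I(t)$ (with $\bar{m}_I^n:=\E\bar{\xi}_I^n$) to first order and choosing $t_0$ of order $\beta\kappa/K$ yields $|\bar{m}_I^n(t_0)-u_I(t_0)| \geq 2\varepsilon$ for large $n$; combining with the concentration of part (i) gives $|\bar{\xi}_I^n(t_0)-u_I(t_0)| \geq \varepsilon$ with probability tending to $1$, which contradicts accuracy and completes case (b) and the theorem.
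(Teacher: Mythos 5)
Your overall architecture — split on whether $\partial_*^n$ vanishes, exploit a first-order drift mismatch of size comparable to the discrepancy, control the second derivative, and conclude via a Taylor expansion of the mean trajectory — is exactly the skeleton of the paper's proof (its Lemma \ref{l:techical} plus Lemmas \ref{l:del*}, \ref{l:degree}, \ref{l:thining}). Within that skeleton you make two genuinely different choices. In case (a) you start the degree process from the all-$a$ configuration and argue via tightness of the normalized degrees (automatic since their empirical mean is $1$), weak subsequential limits, and uniqueness of Laplace transforms; this is correct, and arguably cleaner than the paper's version, which seeds state $a$ only on $V_-^n$ and invokes the Taylor-expansion lemma. (Your parenthetical about ``escaping mass'' is the one shaky point there: probability mass cannot escape, by Markov's inequality, and that is precisely why the limit exists as a full probability measure; the correct statement is that $\partial_*^n\geq c'$ together with $\E X_n=1$ rules out the limit $\delta_1$.) In case (b) you use $\partial_1^n$ via Lemma \ref{l:hierarchy} and a two-state SI process, whereas the paper uses $\partial_2^n$ and freezes every vertex outside the two witness sets in the inert state $R$. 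That freezing trick is what lets the paper restrict attention to the thinned sets $A_D^n, B_D^n$ of Lemma \ref{l:thining}: since only vertices of degree $\leq 2\bar d^n$ ever participate in the dynamics, the triple counts in \eqref{eq:simon} are bounded with no comparison between two different processes needed.

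Because you keep all non-hub vertices susceptible instead, you must compare the true SI process with the SI process on the hub-deleted induced subgraph, and this is where your argument has a real gap: the claim that the hubs ``contribute $O(\beta\partial_*^n T)=o(1)$ to $\bar\xi_I^n$'' counts only the \emph{direct} transmissions across hub-incident edges. Each such transmission seeds a secondary cascade through ordinary edges, so the excess infected mass is not bounded by the seeding rate alone; one needs a Gr\"onwall/branching estimate on the coupled difference process, which multiplies your bound by a factor of order $e^{\beta M T}$ (finite, since pruned degrees are $\leq M\bar d^n$), so the conclusion $o(1)$ survives but the justification as written does not. A second, smaller repair: Lemma \ref{l:fluctuations} controls the martingale deviation $\bar\xi_s^n(t)-\E\left(\bar\xi_s^n(t)\,\middle|\,\mathcal{F}_t\right)$, i.e.\ the deviation from the random compensator, not from the unconditional mean $\E\bar\xi_s^n(t)$, so it does not by itself give the concentration you invoke in case (b). You do not actually need concentration: once $\left|\E\bar\xi_I^n(t_0)-u_I(t_0)\right|\geq 2\varepsilon$ along a subsequence, boundedness of $\left|\bar\xi_I^n(t_0)-u_I(t_0)\right|$ gives $\pr\left(\left|\bar\xi_I^n(t_0)-u_I(t_0)\right|\geq\varepsilon\right)$ bounded away from zero, which is the paper's (dominated-convergence) route and closes the argument directly.
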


Lastly, we introduce some graphs that are not quasi-random.

\begin{theorem}
\label{t:not_quasi_random}
The graph sequence $\left(G^n\right)_{n=1}^{\infty}$ is not quasi-random if at least one of the statement below is true:
\begin{enumerate}
	\item[1)] $G^n$ is bipartite for infinitely many $n$,
	\item[2)] $\limsup_{n \to \infty}\theta^n>0$, meaning, the giant component does not cover most of the vertices,
	\item[3)] $\limsup_{n \to \infty} \frac{\alpha^n}{N}>0$ (there are large independent sets),
	\item[4)] $\liminf_{n \to \infty} \bar{d}^n < \infty$ ($\bar{d}^n$ does not approach infinite).
\end{enumerate}
\end{theorem}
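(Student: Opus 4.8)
The plan is to verify that each of the four conditions forces $\limsup_{n\to\infty}\partial^n>0$, which is precisely the definition of a non quasi-random sequence; in every case I will exhibit, along a suitable subsequence, an explicit pair of vertex sets whose discrepancy stays bounded away from $0$. The structural fact I will exploit repeatedly is that if a set $A\subset[N]$ carries no internal edges, or if two sets $A,B$ have no edges between them, then the $e(\cdot,\cdot)$ term vanishes and $\delta$ collapses to a pure density product, e.g. $\delta(A,A)=-\left(|A|/N\right)^2$. Before treating the conditions individually I would observe that conditions 1) and 4) both reduce to condition 3): a bipartite $G^n$ has a color class that is an independent set of size $\geq N/2$, so $\alpha^n/N\geq 1/2$; and if $\bar d^n\leq D$ along a subsequence, Tur\'an's bound \eqref{alpha_bound} gives $\alpha^n/N\geq 1/(\bar d^n+1)\geq 1/(D+1)$. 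Hence in both cases $\limsup_n \alpha^n/N>0$, which is exactly the hypothesis of 3).

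For condition 3), pass to a subsequence along which $\alpha^n/N\geq c>0$ and let $A$ be a largest independent set, so $|A|=\alpha^n$. Since $A$ has no internal edges, $e(A,A)=0$, whence $|\delta(A,A)|=\left(\alpha^n/N\right)^2\geq c^2$. By \eqref{eq:hierarchy1} this yields $\partial^n\geq\partial_1^n\geq c^2$ along the subsequence, so $\limsup_n\partial^n\geq c^2>0$. This one argument simultaneously settles 1), 3) and 4).

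The remaining and genuinely delicate case is 2). Pass to a subsequence with $\theta^n\geq c>0$, write $W:=[N]\setminus V_{\textrm{conn}}^n$, and recall that edges occur only inside connected components, so $e(A,B)=0$ whenever $A,B$ are unions of distinct components; in particular $\delta(A,B)=-\tfrac{|A|}{N}\tfrac{|B|}{N}$ for such sets. The natural first attempt, $A=V_{\textrm{conn}}^n$ and $B=W$, gives $|\delta(A,B)|=\theta^n(1-\theta^n)$, which is bounded away from $0$ only while $\theta^n$ stays away from $1$; the main obstacle is therefore the regime $\theta^n\to 1$, where the largest component is a vanishing fraction of $[N]$ and this single choice degenerates. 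I would resolve this with a dichotomy on the size of the largest component $C_1$, where $|C_1|=(1-\theta^n)N$. If $|C_1|>N/3$ then $\theta^n<2/3$, so $1-\theta^n>1/3$, and the pair $(V_{\textrm{conn}}^n,W)$ already gives $|\delta|=(1-\theta^n)\theta^n>c/3$. If instead $|C_1|\leq N/3$, then every component has size $\leq N/3$, so a greedy accumulation of components (added until the running total first reaches $N/3$) produces a union of components $A$ with $|A|/N\in[1/3,2/3)$; since $e(A,A^c)=0$ this yields $|\delta(A,A^c)|=\tfrac{|A|}{N}\tfrac{|A^c|}{N}>1/9$. Taking the better of the two strategies, $\partial^n\geq\min\{c/3,\,1/9\}$ along the subsequence, so $\limsup_n\partial^n>0$.

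I expect the only real subtlety to be the $\theta^n\to1$ part of case 2): one must resist using the giant component directly and instead balance the small components, the elementary ``subset-sum'' fact being that items each of size $\leq N/3$ summing to $N$ can always be grouped into a block of relative size in $[1/3,2/3)$. Cases 1), 3) and 4) become routine once reduced to independent sets. A final cosmetic point is that each argument only produces the desired bound along a subsequence, which is exactly what the $\limsup$ in the definition of non quasi-randomness requires.
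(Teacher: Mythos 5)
Your proof is correct, and its skeleton matches the paper's: in every case you exhibit a pair of sets that is either edge-free or edge-saturated, so that $\delta$ degenerates to a density product bounded away from $0$ along a subsequence. The differences are organizational but worth noting. For case 1) the paper computes directly with the bipartition $(V_1^{n},V_2^{n})$, using $e(V_1^{n},V_2^{n})=N\bar d^{\,n}$ to get the stronger bound $\partial^{n}\geq 1-\tfrac{|V_1^n|}{N}\bigl(1-\tfrac{|V_1^n|}{N}\bigr)\geq \tfrac34$, whereas you reduce to case 3) via the larger color class, which is shorter and unifies 1), 3), 4) under one argument but only yields $\tfrac14$. For case 2) the paper makes a dichotomy in the limit ($0<\theta_{\sup}<1$ versus $\theta_{\sup}=1$, the latter handled by greedily merging components, each of size $o(N)$, until passing $N/2$), while you make the dichotomy per graph ($|C_1|>N/3$ versus $|C_1|\leq N/3$) and accumulate components until first reaching $N/3$; your version avoids the $o(N)$ asymptotic bookkeeping entirely and produces the clean uniform bound $\partial^{n}\geq\min\{c/3,\,1/9\}$ along the subsequence, which is slightly more quantitative than the paper's limit statement. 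Cases 3) and 4) (largest independent set, and Tur\'an's bound \eqref{alpha_bound} reducing bounded average degree to large independent sets) are identical to the paper's.
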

Interestingly, based on Proposition \ref{t:not_quasi_random}, not even well known random graphs such as the Erd\H os-R\'enyi graph and random $d$-regular graphs are quasi-random when the degrees are bounded. Combined with Theorem \ref{t:ER}, an Erd\H os-R\'enyi graph will be quasi-random if and only if $\langle d \rangle^n \to \infty. $

\section{Proofs}
\label{s:proof}

\subsection{Smaller statements}

In this section we are giving the proofs for Lemma \ref{l:hierarchy}-\ref{l:ODE}.

\begin{proof}(Lemma \ref{l:hierarchy})
		
$\partial_*^n, \partial_1^n, \partial_2^n \leq \partial^n$ as the maximum is taken on a wider domain in $\partial^n$. This immediately shows \eqref{eq:hierarchy2} and the left hand side of \eqref{eq:hierarchy1}. For the right hand side, let $\tilde{A},\tilde{B} \subset [N]$ be two disjoint sets.
\begin{align}
\nonumber
& \delta(\tilde{A} \sqcup \tilde{B},\tilde{A} \sqcup \tilde{B})=\delta(\tilde{A},\tilde{A})+\delta(\tilde{B},\tilde{B})+2 \delta(\tilde{A},\tilde{B}) \\
\label{eq:disjoint}
& 2|\delta(\tilde{A},\tilde{B})| \leq |\delta(\tilde{A} \sqcup \tilde{B},\tilde{A} \sqcup \tilde{B})|+|\delta(\tilde{A},\tilde{A})|+|\delta(\tilde{B},\tilde{B})| \leq 3 \partial_1^n
\end{align}

Let $A,B \subset [N]$ be two extreme sets maximizing $\left|\delta(A,B) \right|.$ They can be decomposed into the disjoint sets:
\begin{align*}
\tilde{A}:=& A \setminus B, \\
\tilde{B}:=& B \setminus A, \\
\tilde{C}:=& A \cap B.
\end{align*}

\begin{align*}
\partial^n=& \left|\delta(A,B) \right| =\left| \delta(\tilde{A} \sqcup \tilde{C}, \tilde{B} \sqcup \tilde{C}) \right| \leq \\
& \left|\delta(\tilde{A},\tilde{B}) \right|+\left|\delta(\tilde{A},\tilde{C}) \right|+\left|\delta(\tilde{B},\tilde{C}) \right|+\left|\delta(\tilde{C},\tilde{C}) \right| \overset{\eqref{eq:disjoint}}{\leq}\\
& 3 \cdot \frac{3}{2} \partial_1^n+\partial_1^n=\frac{11}{2}\partial_1^n
\end{align*} 

As for \eqref{eq:hiererchy3} Let $A \subset [N]$ such that $|\delta(A,A)|=\partial_1^n.$
\begin{align*}
& \delta(A,[N])=\delta(A,A)+\delta(A,[N]\setminus A) \\
& \partial_1^N=|\delta(A,A)| \leq |\delta(A,[N]\setminus A)|+|\delta(A,[N])| \leq \partial_2^n+\partial_*^n
\end{align*}
\end{proof}

\begin{proof}(Lemma \ref{l:vol1})

For the first inequality,	
\begin{align*}	
& \left|\delta(A,B)-\tilde{\delta}(A,B) \right|= \left| \frac{vol(A)}{vol([N])}\frac{vol(B)}{vol([N])}-\frac{|A|}{N}\frac{|B|}{N} \right | \leq \\
& \left | \frac{vol(A)}{vol([N])}-\frac{|A|}{N} \right |+\left | \frac{vol(B)}{vol([N])}-\frac{|B|}{N} \right |=\left|\delta(A,[N]) \right|+\left|\delta(B,[N]) \right| \leq 2 \partial_*^n.
\end{align*}

For the second inequality, choose $A,B \subset [N]$ to be extremes sets reaching the value the maximum for $\left|\delta(A,B) \right|$.
\begin{align*}
& \partial^n=\left| \delta(A,B) \right | \leq 2 \partial_*^n+\left| \tilde{\delta}(A,B) \right | \leq 2 \partial_*^n+\tilde{\partial}^n.
\end{align*}
Similarly, when $A,B \subset [N]$ maximizes $\left |\tilde{\delta}(A,B) \right |,$ then
\begin{align*}
	& \tilde\partial^n=\left| \tilde{\delta}(A,B) \right | \leq 2 \partial_*^n+\left| \delta(A,B) \right | \leq 2 \partial_*^n+\partial^n.
\end{align*}
\end{proof}

\begin{proof}(Lemma \ref{l:vol2})
		
For the first term in $\tilde{\delta}(A,B)$ and $\tilde{\delta}_H(A,B)$:
\begin{align*}
 &\frac{e(A,B)}{vol([N])}=\frac{vol(H)}{vol([N])}\frac{e_H(A,B)}{vol(H)}+\\
&\frac{e\left(A \cap H^c,B \cap H\right)}{vol([N])}+\frac{e\left(A \cap H,B \cap H^c\right)}{vol([N])}+\frac{e\left(A \cap H^c,B \cap H^c\right)}{vol([N])} \\
& \left|\frac{e(A,B)}{vol([N])}-\frac{e_h(A,B)}{vol(H)} \right| \leq 4 \frac{vol\left(H^c \right)}{vol(N)}.
\end{align*}
For the second term
\begin{align*}
& \left| \frac{vol(A)}{vol([N])}\frac{vol(B)}{vol([N])}-\frac{vol\left(A \cap H \right)}{vol(H)} \frac{vol\left(B \cap H \right)}{vol(H)} \right| \leq \\
& \left| \frac{vol(A)}{vol([N])}\frac{vol(B)}{vol([N])}-\frac{vol\left(A  \right)}{vol(H)} \frac{vol\left(B  \right)}{vol(H)} \right| +\\
&\left| \frac{vol(A)}{vol(H)}\frac{vol(B)}{vol(H)}-\frac{vol\left(A \cap H \right)}{vol(H)} \frac{vol\left(B \cap H \right)}{vol(H)} \right| \leq\\
& \left|\frac{vol \left(A  \right)}{vol(H)}-\frac{vol \left(A  \right)}{vol([N])} \right|+\left|\frac{vol \left(B  \right)}{vol(H)}-\frac{vol \left(B  \right)}{vol([N])} \right| + \\
& \left| \frac{vol(A)}{vol([N])}-\frac{vol\left(A  \cap H\right)}{vol([N])} \right|+\left| \frac{vol(B)}{vol([N])}-\frac{vol\left(B  \cap H\right)}{vol([N])} \right|\leq\\
& 2 \left| \frac{vol([N])}{vol(H)}-1 \right|+2 \frac{vol\left(H^c\right)}{vol([N])} = 2 \left| \frac{1}{1-\frac{vol(H^c)}{vol([N])}}-1 \right|+2 \frac{vol\left(H^c\right)}{vol([N])} \leq 6 \frac{vol(H^c)}{vol([N])},
\end{align*}
where we used $\frac{1}{1-x}-1=\frac{x}{1-x} \leq 2x$ whit $x=\frac{vol(H^c)}{vol([N])} \leq \frac{1}{2}$ in the last step.

Putting together the two bounds yields
\begin{align*}
& \left|\tilde{\delta}(A,B)-\tilde{\delta}_H(A,B) \right| \leq 10 \frac{vol(H^c)}{vol([N])}.
\end{align*}

The second inequality can be proven in the same fashion as in Lemma \ref{l:vol1} by choosing $A,B \subset [N]$ that maximizes $\left|\tilde{\delta}(A,B) \right|$ then the same whit $\left|\tilde{\delta}_H(A,B) \right|.$
\end{proof}

\begin{proof}(Lemma \ref{l:ODE})
	
For technical reasons we modify the ODE system slightly. Introduce
\begin{align*}
	\hat{q}_{ss'}\left( \phi  \right):=&q_{ss'}^{(0)}+\sum_{r \in \S}q_{ss',r}^{(1)}\left| \phi_{r} \right| \ \ (s' \neq s) \\
	\hat{q}_{ss}\left( \phi  \right):=&\sum_{s' \neq s}q_{s's}(\phi).
\end{align*}
This modification ensures $\hat{q}_{ss'}(\phi)\geq 0$ for all values of $\phi$ not just the non negative ones. Note that $\left.\hat{q}_{ss'}\right|_{\phi \geq 0}=\left.q_{ss'}\right|_{\phi \geq 0}.$

 With $\hat{Q}(\phi)=\left(\hat{q}_{ss'}(\phi) \right)_{s,s' \in \mathcal{S}}$ the altered ODE system is defined as
\begin{align}
\label{eq:ODE2}
\frac{\d}{\d t} \hat{u}(t)=\hat{Q}(\hat{u}(t))\hat{u}(t)=:\hat{f}(\hat{u}(t)).	
\end{align}

Since the right hand side is locally Lipschitz a unique local solution exists. This solution is either global, or there is a blow up at some time $t_0$. Indirectly assume the latter.

Introduce an auxiliary time inhomogenous Markov process defined on $[0,t_0[.$ The process makes transitions acording to the transition matrix $\hat{Q}(\hat{u}(t))$ where we think of $\hat{u}(t)$ as a known function. $p_s(t)$ is the probability of the auxiliary process is in state $s \in S$ at time $t$. $p(t):= \left(p_{s}(t) \right)_{s \in \S} \in \Delta^{S}$ as $p(t)$ is a distribution on $\S$.

The Kolmogorov equations for the auxiliary process takes the form
\begin{align*}
\frac{\d}{\d}p(t)=&\hat{Q}(\hat{u}(t))p(t).	
\end{align*}	
This enables us to use a Grönwall-type argument.
\begin{align*}
\left \|\hat{u}(t)-p(t) \right \|_1 \leq& \left \|\hat{u}(0)-p(0) \right \|_1 + \int_{0}^{t} \left \| \hat{Q}(\hat{u}(\tau)) [\hat{u}(\tau)-p(\tau)] \right \|_1 \d \tau \leq \\
&\left \|\hat{u}(0)-p(0) \right \|_1 + \max_{0 \leq \tau \leq t} \left \| \hat{Q}(\hat{u}(\tau))  \right \|_1 \int_{0}^{t} \left \| \hat{u}(\tau)-p(\tau) \right \|_1 \d \tau \\
\sup_{0 \leq \tau \leq t}\left \|\hat{u}(\tau)-p(\tau) \right \|_1 \leq& \left \|\hat{u}(0)-p(0) \right \|_1 \exp \left(\max_{0 \leq \tau \leq t} \left \| \hat{Q}(\hat{u}(\tau))  \right \|_1 \cdot t \right)
\end{align*}
Therefore, when the initial conditions are $p(0)=\hat{u}(0)=u(0)$ we have $\hat{u}(t)=p(t)$ for $t \in [0,t_0[,$ implying $\hat{u}(t)$ being bounded which contradicts $\hat{u}(t)$ exploding at time $t_0$. Thus, the solution is global. Also, as $p(t)$ is a probability vector, the coordinates must be non negative and on the simplex $\Delta^{\S}$. Since, for non-negative inputs $\hat{f}\left(\hat{u}(t)\right)=f \left(\hat{u}(t) \right),$ $\hat{u}(t)$ is also a solution to \eqref{eq:u}. Local Lipschitz-continouity of \eqref{eq:u} implies there is no other solution. 
\end{proof}	

\subsection{Positive results}

Int this section Theorem \ref{t:positive} and \ref{t:ER} are proven.

We start by proving Theorem \ref{t:positive}. 

The way we are going to achieve that is by decomposing the approximation error into two parts: fluctuation error and topological error, and then applying Grönwall's inequality to mitigate error propagation.

The fluctuation error in state $s \in \S$ is denoted by $U_s^n(t)$ and defined as
$$ U_s^n(t):=\bar{\xi}_s^n(t)-\E \left(\bar{\xi}_s^n(t) \left | \mathcal{F}_t \right. \right).$$
Note that due to \eqref{eq:condexp2} the conditional expectation can be written as
$$\E \left(\bar{\xi}_s^n(t) \left | \mathcal{F}_t \right. \right)=\bar{\xi}^n_s(0)+\int_{0}^{t}g_s \left(\bar{\xi}^n(\tau), \nu^n(\tau) \right) \d \tau. $$

The topological error denoted by $K_s^n(t)$ is defined as 
$$K_s^n(t):=\int_{0}^{t}g_s \left(\bar{\xi}^n(\tau), \nu^n(\tau) \right)-f_s \left( \bar{\xi}^n(\tau)\right) \d \tau . $$
Using the vector notations $U^n(t)= \left( U_s^n(t) \right)_{s \in \S}, K^n(t)= \left( K_s^n(t) \right)_{s \in \S} $ the ratio vector $\bar{\xi}^n(t)$ can be given as
\begin{align*}
& \bar{\xi}^n(t)=\bar{\xi}^n(0)+U^n(t)+K^n(t)+\int_{0}^{t} f \left( \bar{\xi}^n(\tau) \right) \d \tau.
\end{align*}

\begin{lemma} (Grönwall type error bound)
\label{l:grönwall}

For all $T>0$
\begin{align*}
\sup_{0 \leq t \leq T} \left \|\bar{\xi}^n(t)-u(t) \right \|_1 \leq \left( \left \|\bar{\xi}^n(0)-u(0) \right \|_1+ \sup_{0 \leq t \leq T} \left \| U^n(t) \right \|_1 +C(T)\partial^n \right)e^{L_f T},	
\end{align*}
where $C(T):= q_{\max}^{(1)}\left| \S \right|^3 T$.
\end{lemma}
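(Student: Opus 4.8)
The plan is to start from the integral representation of $\bar{\xi}^n(t)$ displayed just above the lemma, subtract the integral form of the governing ODE \eqref{eq:u}, and then close the resulting inequality with Grönwall. Since $u(t)=u(0)+\int_0^t f\left(u(\tau)\right)\d\tau$, subtracting gives
\[
\bar{\xi}^n(t)-u(t)=\left[\bar{\xi}^n(0)-u(0)\right]+U^n(t)+K^n(t)+\int_0^t\left[f\left(\bar{\xi}^n(\tau)\right)-f\left(u(\tau)\right)\right]\d\tau .
\]
Taking $\ell^1$ norms, applying the triangle inequality, and using the Lipschitz bound $\left\|f(x)-f(y)\right\|_1\le L_f\left\|x-y\right\|_1$ on $\Delta^{\S}$ yields
\[
\left\|\bar{\xi}^n(t)-u(t)\right\|_1\le \left\|\bar{\xi}^n(0)-u(0)\right\|_1+\left\|U^n(t)\right\|_1+\left\|K^n(t)\right\|_1+L_f\int_0^t\left\|\bar{\xi}^n(\tau)-u(\tau)\right\|_1\d\tau .
\]

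Next I would control the topological error $K^n(t)$ uniformly in $t$. The integrand in $K_s^n$ is $g_s-f_s$, and after the zeroth-order terms cancel it equals $\sum_{s'\in\S}\sum_{r\in\S}q_{ss',r}^{(1)}\left[\nu_{s'r}^n(\tau)-\bar{\xi}_{s'}^n(\tau)\bar{\xi}_r^n(\tau)\right]$. Bounding each coefficient by $q_{\max}^{(1)}$ and each discrepancy term by $\partial^n$ through \eqref{ineq:del} gives $\left|g_s-f_s\right|\le q_{\max}^{(1)}|\S|^2\partial^n$. Integrating over $[0,t]\subset[0,T]$ and summing over $s\in\S$ then produces the uniform estimate $\left\|K^n(t)\right\|_1\le q_{\max}^{(1)}|\S|^3 T\,\partial^n=C(T)\partial^n$ for all $t\le T$.

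To finish, I would set the constant $A:=\left\|\bar{\xi}^n(0)-u(0)\right\|_1+\sup_{0\le t\le T}\left\|U^n(t)\right\|_1+C(T)\partial^n$, which does not depend on $t$, so that the combined inequality reads $\left\|\bar{\xi}^n(t)-u(t)\right\|_1\le A+L_f\int_0^t\left\|\bar{\xi}^n(\tau)-u(\tau)\right\|_1\d\tau$. Grönwall's inequality then yields $\left\|\bar{\xi}^n(t)-u(t)\right\|_1\le A\,e^{L_f t}\le A\,e^{L_f T}$, and taking the supremum over $t\in[0,T]$ gives exactly the claimed bound. This argument is essentially pure bookkeeping, so I do not expect any deep obstacle; the only points requiring care are tracking the combinatorial constant $|\S|^3$ (one factor from the outer sum over $s$, two from the double sum over $s',r$) and observing that replacing $\left\|U^n(t)\right\|_1$ by its supremum and using the $t$-uniform bound on $K^n(t)$ is what lets the Grönwall constant $A$ be pulled outside the integral. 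The adaptedness needed for $U^n$ and $K^n$ to be well defined is inherited from the construction of the process in Section \ref{c:markov}.
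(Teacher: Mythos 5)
Your proposal is correct and follows essentially the same route as the paper: the same decomposition $\bar{\xi}^n(t)=\bar{\xi}^n(0)+U^n(t)+K^n(t)+\int_0^t f\left(\bar{\xi}^n(\tau)\right)\d\tau$, the same bound on $K^n$ via \eqref{ineq:del} producing the constant $q_{\max}^{(1)}|\S|^3 T$, and the same Grönwall closure (the only cosmetic difference being that you bound $K^n$ before invoking Grönwall rather than after, which changes nothing).
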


\begin{proof}(Lemma \ref{l:grönwall})

The integral form of \eqref{eq:u} is
\begin{align*}
& u(t)=u(0)+ \int_{0}^{t} f(u(\tau)) \tau.
\end{align*}
Therefore using Grönwall's inequality one gets
\begin{align*}
\left \| \bar{\xi}^n(t)-u(t) \right \|_1 \leq &\left \| \bar{\xi}^n(0)-u(0) \right \|_1 + \sup_{0 \leq t \leq T} \left \|U^n(t) \right \|_1+\sup_{0 \leq t \leq T} \left \|K^n(t) \right \|_1+\\
&L_f \int_{0}^{t} \left \| \bar{\xi}^n(\tau)-u(\tau) \right \|_1 \d \tau \\
\sup_{0 \leq t \leq T}\left \| \bar{\xi}^n(t)-u(t) \right \|_1 \leq & \left( \left \| \bar{\xi}^n(0)-u(0) \right \|_1 + \sup_{0 \leq t \leq T} \left \|U^n(t) \right \|_1+ \right. \\
&\left .\sup_{0 \leq t \leq T} \left \|K^n(t) \right \|_1 \right)e^{L_f T}.
\end{align*}
What is left to do is to upper-bound the topological error term.

Note that $g_s$ and $f_s$ only differ in their quadratic terms. Using \eqref{ineq:del} yields
\begin{align*}
\left |K_s^n(t) \right | \leq & \int_{0}^{t} \left |g_s \left( \bar{\xi}_s^n(\tau), \nu^n (\tau) \right)-f \left( \bar{\xi}^n(\tau) \right) \right | \d \tau \leq \\
&\int_{0}^{t} \sum_{s' \in \S} \sum_{r \in \S} \left|q_{ss',r}^{(1)} \right| \left| \nu_{s'r}^n(\tau)-\bar{\xi}_{s'}^n(\tau)\bar{\xi}_{r}^n(\tau) \right| \d \tau \leq q_{\max}^{(1)} \left | \S \right |^2 t \partial^n \\
\sup_{0 \leq t \leq T}\left \|K^n(t) \right \|_1 \leq & q_{\max}^{(1)} \left | \S \right |^3 T \partial^n=:C(T)\partial^n.
\end{align*}
\end{proof}

The last step in proving Theorem \ref{t:positive} is to show an appropriate upper-bound for the fluctuation error.

\begin{lemma}(Upper-bounding fluctuations)
\label{l:fluctuations}

For all $T>0$
\begin{align*}
& \sup_{0 \leq t \leq T} \left \|U^n(t) \right \|_{1}=O_{p} \left( \frac{1}{\sqrt{N}} \right).
\end{align*}	
\end{lemma}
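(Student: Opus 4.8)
The plan is to recognize the fluctuation vector $U^n$ as a martingale and to control its supremum through Doob's maximal inequality together with a bound on its predictable quadratic variation; the crux is that the total jump rate of the population process stays of order $N$ despite possibly heterogeneous degrees.

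First I would make the martingale structure explicit. Since $\left(\xi_i^n(t)\right)_{i=1}^N$ is a continuous-time Markov chain and $\bar{\xi}_s^n$ is a function of its state, Dynkin's formula together with \eqref{eq:condexp2} shows that
$$ M_s^n(t):=\bar{\xi}_s^n(t)-\bar{\xi}_s^n(0)-\int_0^t g_s\left(\bar{\xi}^n(\tau),\nu^n(\tau)\right)\d\tau $$
is a mean-zero $\left(\mathcal{F}_t\right)_{t\geq 0}$-martingale. By the displayed formula for $\E\left(\bar{\xi}_s^n(t)\mid\mathcal{F}_t\right)$ given just before the lemma, this $M_s^n(t)$ is exactly $U_s^n(t)$, i.e. $\bar{\xi}_s^n(t)$ minus its compensator. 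It is a pure-jump martingale whose jumps occur precisely when a single vertex transitions into or out of state $s$, each such jump changing $\bar{\xi}_s^n$ by exactly $\pm\tfrac{1}{N}$.

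Second, I would bound the predictable quadratic variation $\langle M_s^n\rangle$. Because distinct vertices almost surely never jump simultaneously, the carré-du-champ operator is additive over single-vertex transitions, so the integrand of $\langle M_s^n\rangle$ is $\tfrac{1}{N^2}$ times the instantaneous total rate of transitions affecting coordinate $s$. The key estimate is that this rate is $O(N)$ uniformly in time: using the affine form $q_{ss'}(\phi)=q_{ss'}^{(0)}+\sum_{r}q_{ss',r}^{(1)}\phi_r$ and the norm identity \eqref{eq:phi_norm}, the total outgoing rate of vertex $i$ is at most $|\S|\left(q_{\max}^{(0)}+q_{\max}^{(1)}\tfrac{d^n(i)}{\bar{d}^n}\right)$, so that
$$ \sum_{i=1}^N |\S|\left(q_{\max}^{(0)}+q_{\max}^{(1)}\tfrac{d^n(i)}{\bar{d}^n}\right)=|\S|N\left(q_{\max}^{(0)}+q_{\max}^{(1)}\right), $$
since $\sum_{i}d^n(i)=N\bar{d}^n$. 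Hence $\langle M_s^n\rangle(t)\leq \tfrac{1}{N^2}\cdot|\S|N\left(q_{\max}^{(0)}+q_{\max}^{(1)}\right)\cdot t=\tfrac{Ct}{N}$ pathwise, with $C$ depending only on $q_{\max}^{(0)},q_{\max}^{(1)},|\S|$.

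Finally, I would combine $\E\left[(M_s^n(T))^2\right]=\E\left[\langle M_s^n\rangle(T)\right]\leq\tfrac{CT}{N}$ with Doob's $L^2$ maximal inequality to obtain $\E\!\left[\sup_{0\leq t\leq T}(M_s^n(t))^2\right]\leq\tfrac{4CT}{N}$, whence $\sup_{0\leq t\leq T}|U_s^n(t)|=O_p(N^{-1/2})$ by Chebyshev. Summing over the finitely many states via $\|U^n(t)\|_1=\sum_{s\in\S}|U_s^n(t)|$ then yields the claim, and makes transparent the remark following Theorem \ref{t:positive} that the implied constant depends only on $q_{\max}^{(0)},q_{\max}^{(1)},T$ and is otherwise independent of the graph. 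I expect the quadratic-variation bound to be the main obstacle: one must check that degree heterogeneity cannot inflate the total jump rate beyond $O(N)$, which is exactly what the normalization $\|\phi_i^n\|_1=d^n(i)/\bar{d}^n$ and the edge-count identity $\sum_i d^n(i)=N\bar{d}^n$ guarantee.
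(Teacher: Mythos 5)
Your proof is correct, and it rests on the same two pillars as the paper's: the uniform pathwise bound on the total transition rate, namely $\sum_{i=1}^N \bigl(q_{\max}^{(0)}+q_{\max}^{(1)}\tfrac{d^n(i)}{\bar d^n}\bigr)=N\bigl(q_{\max}^{(0)}+q_{\max}^{(1)}\bigr)$ per channel, which is exactly where \eqref{eq:phi_norm} and $\sum_i d^n(i)=N\bar d^n$ enter, followed by Doob's maximal inequality and a Chebyshev bound at scale $M/\sqrt N$. The implementation, however, is genuinely different. The paper uses the Kurtz-style random time-change representation \eqref{eq:Poisson}: it writes $\bar\xi_s^n$ in distribution as a sum over transition channels $(s',s)$ of independent rate-one Poisson processes evaluated at integrated rates, defines per-channel deviations $U_{ss'}^n$, and applies Doob's inequality to the elementary martingale $\mathcal{N}_{ss'}^n(Nt)-Nt$ run up to the deterministic time $NCT$; the only probabilistic input is that a compensated Poisson process is a square-integrable martingale. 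You instead work with the compensated process $U_s^n$ directly, identify it as the Dynkin martingale, and bound its predictable quadratic variation through the carr\'e-du-champ, using that jumps have size $1/N$ and almost surely do not coincide. Your route requires more machinery (square-integrability of the jump martingale, the identity $\E[(M_s^n(T))^2]=\E[\langle M_s^n\rangle(T)]$, and the jump-process formula for $\langle M_s^n\rangle$), but it avoids constructing the time-change representation and the channel-by-channel decomposition, and it transfers verbatim to any Markov population process with uniformly $O(N)$ total jump rate. The paper's route is more elementary and self-contained, at the cost of the distributional-equality bookkeeping. Both yield constants depending only on $q_{\max}^{(0)},q_{\max}^{(1)},|\S|,T$, confirming, as you note, the graph-independence remark after Theorem \ref{t:positive}.
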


\begin{proof}(Lemma \ref{l:fluctuations} )

We represent the ratios $\bar{\xi}_{s}^{n}(t)$ with the help of Poisson-processes similarly as in \cite{Kurtz1978}. For each $s,s' \in S$ let $\mathcal{N}_{ss'}^n(t)$ be independent Poisson-processes with rate $1$ representing a transition from state $s'$ to $s$. Then $\bar{\xi}_s^n(t)$ can be written in distribution as
\begin{align}
\label{eq:Poisson}
\begin{split}
\bar{\xi}_s^n(t)\overset{d}{=}\bar{\xi}_s^n(0)&+\underbrace{ \frac{1}{N} \sum_{s' \neq s} \mathcal{N}_{ss'}^n \left(\int_{0}^{t} \sum_{i=1}^{N} q_{ss'} \left( \phi_i^n(\tau)\right)\xi_{i,s'}^n(\tau) \d \tau  \right) }_{\textit{vertices coming to $s$}} \\
&-\underbrace{ \frac{1}{N} \sum_{s' \neq s} \mathcal{N}_{ss'}^n \left(\int_{0}^{t} \sum_{i=1}^{N} q_{s's} \left( \phi_i^n(\tau)\right)\xi_{i,s}^n(\tau) \d \tau  \right) }_{\textit{vertices leaving $s$}}.
\end{split}
\end{align}
To understand \eqref{eq:Poisson}, the term $\frac{1}{N}$ comes from coming (leaving) vertices increases (decreases) $\bar{\xi}_s^n(t)$ by $\frac{1}{N}$. If the $i$th vertex is at state $s'$, then it makes a transition to state $s$ with rate $q_{ss'}\left( \phi_i^n(t)\right)$. In total, vertices vertices comeing from state $s'$ to $s$ with rate $\sum_{i=1}^{N}q_{ss'}\left( \phi_i^n(t)\right)\xi_{i,s'}^n(t).$

The difference between the Poisson processes and their conditional expectation is denoted by
\begin{align*}
U_{ss'}^n(t):=&\frac{1}{N}\mathcal{N}_{ss'}^n \left(\int_{0}^{t} \sum_{i=1}^{N} q_{ss'} \left( \phi_i^n(\tau)\right)\xi_{i,s'}^n(\tau) \d \tau  \right)-\\
&\int_{0}^{t} \frac{1}{N}\sum_{i=1}^{N} q_{ss'} \left( \phi_i^n(\tau)\right)\xi_{i,s'}^n(\tau) \d \tau.
\end{align*}
Observe $U_{s}^n(t) \overset{d}{=} \sum_{s' \neq s}U_{ss'}^n(t)-U_{s's}^n(t).$ As there are finitely many terms, it is enough to show $\sup_{0 \leq t \leq T} \left| U_{ss'}^n(t) \right |=O_{p}\left(\frac{1}{\sqrt{N}}\right).$

Using \eqref{eq:phi_norm} the rates can be upper-bounded by
\begin{align*}
& \left | \frac{1}{N}\sum_{i=1}^{N} q_{ss'} \left( \phi_i^n(\tau)\right)\xi_{i,s'}^n(\tau) \right | \leq  \frac{1}{N}\sum_{i=1}^{N} \left | q_{ss'} \left( \phi_i^n(\tau)\right) \right |=\\
&\frac{1}{N}\sum_{i=1}^{N} \left | q_{ss'}^{(0)}+\sum_{r \in \S }q_{ss',r}^{(1)}  \phi_{i,r}^n(\tau) \right | \leq \frac{1}{N}\sum_{i=1}^{N} \left(q_{\max}^{(0)}+q_{\max}^{(1)} \left \| \phi_{i}^n(\tau) \right \|_1 \right) =\\
& \frac{1}{N}\sum_{i=1}^{N} \left(q_{\max}^{(0)}+q_{\max}^{(1)} \frac{d^n(i)}{\bar{d}^n} \right)=q_{\max}^{(0)}+q_{\max}^{(1)}=:C.
\end{align*}

As $\mathcal{N}_{ss'}^{n}(Nt)-Nt$ is a martingale, $\left(\mathcal{N}_{ss'}^{n}(Nt)-Nt \right)^2$ is a submartingale, thus from Doob's inequality we get
\begin{align*}
& \pr \left( \sup_{0 \leq t \leq T } \left|U_{ss'}^n(t) \right| \geq \varepsilon \right) \leq \pr \left( \sup_{0 \leq t \leq CT} \left| \mathcal{N}_{ss'}^n(Nt)-Nt \right | \geq N \varepsilon \right)= \\
& \pr \left( \sup_{0 \leq t \leq CT} \left( \mathcal{N}_{ss'}^n(Nt)-Nt \right )^2 \geq N^2 \varepsilon^2 \right) \leq \frac{1}{N^2 \varepsilon^2} \E  \left[\left(\mathcal{N}_{ss'}^n(NCT)-NCT \right)^2 \right]=\\
& \frac{1}{N^2 \varepsilon^2} \mathbb{D}^2 \left( \mathcal{N}_{ss'}^n(NCT)\right)=\frac{CT}{N \varepsilon^2} 
\end{align*}.

We can choose $\varepsilon= \frac{M}{\sqrt{N}}$ to conclude
\begin{align*}
& \pr \left( \sup_{0 \leq t \leq T } \left|U_{ss'}^n(t) \right| \geq \frac{M}{\sqrt{N}} \right) \leq \frac{CT}{M^2} \to 0 \ \ (M \to \infty) \\
& \sup_{0 \leq t \leq T } \left|U_{ss'}^n(t) \right|=O_{p} \left( \frac{1}{\sqrt{N}} \right).
\end{align*}
\end{proof}

Now we can turn to proving the error bound for the Erdős Rényi graph sequence. First, we show that for such graph sequences have close to homogeneous degree distributions.

\begin{lemma}
\label{l:ER}
Let $\left( G^n\right)_{n=1}^{\infty}$ be a sequence of Erdős Rényi graph such that $ \langle d \rangle^n \to \infty . $  Then
\begin{align*}
& \partial_*^n=O_p \left(\frac{1}{\sqrt{\langle d \rangle ^n}}\right).
\end{align*}
\end{lemma}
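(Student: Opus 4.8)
The plan is to work directly from the degree-sequence formula \eqref{eq:del*_nice}, which gives
\begin{align*}
\partial_*^n=\frac{1}{2N\bar{d}^n}\sum_{i=1}^N\left|d^n(i)-\bar{d}^n\right|=\frac{1}{2\bar{d}^n}\cdot\frac{1}{N}\sum_{i=1}^N\left|d^n(i)-\bar{d}^n\right|.
\end{align*}
Since the target order is $\left(\langle d\rangle^n\right)^{-1/2}$ and I expect $\bar{d}^n$ to concentrate around $\langle d\rangle^n$, it suffices to show that the mean absolute deviation of the degrees, namely $\frac{1}{N}\sum_i|d^n(i)-\bar{d}^n|$, is $O_p\!\left(\sqrt{\langle d\rangle^n}\right)$; dividing by $\bar{d}^n\approx\langle d\rangle^n$ then produces the claimed rate. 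So the proof splits into two ingredients: controlling the dispersion of the degrees, and controlling the random centering/normalizing constant $\bar{d}^n$.

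For the dispersion I would first compare against the \emph{deterministic} mean $\langle d\rangle^n=(N-1)p^n$ rather than $\bar{d}^n$. Each degree $d^n(i)=\sum_{j\neq i}a_{ij}^n$ is marginally $\mathrm{Binomial}(N-1,p^n)$, hence $\E\left[d^n(i)\right]=\langle d\rangle^n$ and $\mathrm{Var}\left(d^n(i)\right)=(N-1)p^n(1-p^n)\leq\langle d\rangle^n$. By Jensen's (equivalently Cauchy--Schwarz) inequality $\E\left[\left|d^n(i)-\langle d\rangle^n\right|\right]\leq\sqrt{\mathrm{Var}\left(d^n(i)\right)}\leq\sqrt{\langle d\rangle^n}$. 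The key point here is that the degrees are \emph{not} independent (they share edges), but this is irrelevant for the expectation of a sum: by linearity of expectation alone,
\begin{align*}
\E\left[\frac{1}{N}\sum_{i=1}^N\left|d^n(i)-\langle d\rangle^n\right|\right]=\E\left[\left|d^n(1)-\langle d\rangle^n\right|\right]\leq\sqrt{\langle d\rangle^n},
\end{align*}
so Markov's inequality yields $\frac{1}{N}\sum_i\left|d^n(i)-\langle d\rangle^n\right|=O_p\!\left(\sqrt{\langle d\rangle^n}\right)$.

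It then remains to transfer from $\langle d\rangle^n$ to $\bar{d}^n$ both inside the absolute value and in the denominator. Writing $N\bar{d}^n=2\sum_{i<j}a_{ij}^n$ as twice a $\mathrm{Binomial}\!\left(\binom{N}{2},p^n\right)$ count gives $\E\left[\bar{d}^n\right]=\langle d\rangle^n$ and $\mathrm{Var}\left(\bar{d}^n\right)\leq 2\langle d\rangle^n/N$, so $\left|\bar{d}^n-\langle d\rangle^n\right|=O_p\!\left(\sqrt{\langle d\rangle^n/N}\right)$, which is of strictly smaller order than $\sqrt{\langle d\rangle^n}$; in particular $\bar{d}^n=\left(1+o_p(1)\right)\langle d\rangle^n$. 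The triangle inequality $\frac{1}{N}\sum_i\left|d^n(i)-\bar{d}^n\right|\leq\frac{1}{N}\sum_i\left|d^n(i)-\langle d\rangle^n\right|+\left|\langle d\rangle^n-\bar{d}^n\right|$ then keeps the numerator at order $\sqrt{\langle d\rangle^n}$, and combining with the denominator bound gives
\begin{align*}
\partial_*^n=\frac{1}{2\bar{d}^n}\,O_p\!\left(\sqrt{\langle d\rangle^n}\right)=\frac{O_p\!\left(\sqrt{\langle d\rangle^n}\right)}{2\left(1+o_p(1)\right)\langle d\rangle^n}=O_p\!\left(\frac{1}{\sqrt{\langle d\rangle^n}}\right).
\end{align*}
I do not anticipate a genuinely hard step: the only mild subtlety is resisting the temptation to invoke independence of the degrees (unavailable, but unnecessary) and instead leaning on linearity of expectation for the dispersion bound, while the random normalization $\bar{d}^n$ is tamed by its $O(\langle d\rangle^n/N)$ variance. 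The assumption $\langle d\rangle^n\to\infty$ enters precisely to make $\bar{d}^n/\langle d\rangle^n\to 1$ in probability, which is what licenses replacing $\bar{d}^n$ by $\langle d\rangle^n$ in the denominator.
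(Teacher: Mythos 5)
Your proposal is correct and follows essentially the same route as the paper: compare the degrees to the deterministic mean $\langle d\rangle^n$, bound $\E\left|d^n(i)-\langle d\rangle^n\right|$ by $\sqrt{\mathrm{Var}\left(d^n(i)\right)}\leq\sqrt{\langle d\rangle^n}$ via Jensen and conclude by Markov's inequality, establish $\bar{d}^n/\langle d\rangle^n\to 1$ in probability through a variance bound, and combine with the triangle inequality. The only cosmetic difference is that you compute $\mathrm{Var}\left(\bar{d}^n\right)$ from the representation of $N\bar{d}^n$ as twice a binomial edge count, whereas the paper expands the covariances $\cov\left(d^n(i),d^n(j)\right)$ of the degrees — the same computation in different clothing.
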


\begin{proof}(Lemma \ref{l:ER})
Firstly, we modify $\partial^n_*$ such that we replace $\bar{d}^n$ with $\langle d \rangle^n.$
\begin{align*}
& \bar{\partial}_*^n:= \frac{1}{2 N \langle d \rangle^n}\sum_{i=1}^N \left|d^n(i)-\langle d \rangle^n \right|
\end{align*}
$\bar{\partial}_*^n=O_p \left(\frac{1}{\sqrt{\langle d \rangle ^n}}\right) $ as
\begin{align*}
 \E \left( \bar{\partial}_*^n \right)=& \frac{1}{2\langle d \rangle ^n}\E \left(\left| d^n(1)-\langle d \rangle ^n \right| \right) \leq \frac{1}{2\langle d \rangle ^n} \mathbb{D}\left(d^n(1)\right) =\\
 &\frac{\sqrt{(N-1)p^n\left(1-p^n\right)}}{2\langle d \rangle ^n} \leq \frac{1}{2\sqrt{\langle d \rangle^n}}.
\end{align*}

Secondly, $\frac{\bar{d}^n}{\langle d \rangle ^n}=1+O_p \left(\frac{1}{\sqrt{N}}\right)$	as $\E \left( \frac{\bar{d}^n}{\langle d \rangle ^n}\right)=1$ and
\begin{align*}
\mathbb{D}^2 \left( \frac{\bar{d}^n}{\langle d \rangle ^n}\right)=& \frac{1}{N^2 \left(\langle d \rangle ^n \right)^2} \sum_{i=1}^N \sum_{j=1}^N \cov \left(d^n(i), d^n(j)\right)=\\
&\frac{N}{N^2 \left(\langle d \rangle ^n \right)^2} \mathbb{D}^2 \left(d^n(1) \right)+\frac{{N \choose 2}}{N^2 \left(\langle d \rangle ^n \right)^2} \cov\left(d^n(1),d^n(2)\right)  \leq \\
& \frac{(N-1)p^n\left(1-p^n \right)}{N \left(\langle d \rangle ^n \right)^2} +\frac{1}{ \left(\langle d \rangle ^n \right)^2} \mathbb{D}^2 \left(a_{12}^n \right) \leq \\
& \frac{1}{N \langle d \rangle ^n } +\frac{p^n \left(1-p^n \right)}{ \left(\langle d \rangle ^n \right)^2}  \leq \frac{1}{N \langle d \rangle ^n }+\frac{1}{(N-1) \langle d \rangle ^n } \leq \frac{2}{(N-1)},
\end{align*}
where in the last step we used $\langle d \rangle ^n \geq 1 $ for large enough $n$.

Therefore,
\begin{align*}
\partial_*^n=\frac{1}{2N \bar{d}^n}\sum_{i=1}^n \left|d^n(i)-\bar{d}^n \right| \leq \frac{1}{2} \left| \frac{\langle d \rangle^n}{\bar{d}^n}-1 \right|+\frac{\langle d \rangle^n}{\bar{d}^n}\bar{\partial}_*^n=O_p \left(\frac{1}{\sqrt{\langle d \rangle^n}} \right).
\end{align*}	
\end{proof}

\begin{proof}(Theorem \ref{t:ER})

Let $c_0$ and $H=core(G^n)$ be the constant and the core from Proposition \ref{p:core}. For large enough $n$ $c_0 \leq \langle d \rangle^n $ holds based on the assumption.

In the first case, assume $c_0 \leq \langle d \rangle^n \leq 0.99 N.$ then based on Proposition \ref{p:core}  $ \frac{\left|H^c \right|}{N} \leq e^{-\frac{\langle d \rangle^n}{c_0}}$ and w.h.p. and $\lambda_H^n \leq \frac{c_0}{\sqrt{\langle d \rangle^n}}$ w.h.p. Using \eqref{eq:spectral}
$$ \tilde{\partial}_H^n \leq \lambda_H^n=O_p \left(\frac{1}{\sqrt{\langle d \rangle ^n}}\right). $$

By Lemma \ref{l:ER}
\begin{align*}
& \frac{vol\left(H^c \right)}{vol([N])} \leq \frac{\left| H^c \right|}{N}+\partial_*^n \leq e^{-\frac{\langle d \rangle^n}{c_0}}+O_{p} \left(\frac{1}{\sqrt{\langle d \rangle^n}} \right) = O_{p} \left(\frac{1}{\sqrt{\langle d \rangle^n}} \right) \leq \frac{1}{2}
\end{align*}
for large enough $n$ with high probability, hence according to Lemma \ref{l:vol2}
\begin{align*}
\tilde{\partial}^n \leq \tilde{\partial}_H^n+10\frac{vol\left(H^c \right)}{vol([N])}= O_{p} \left(\frac{1}{\sqrt{\langle d \rangle^n}} \right).
\end{align*}

In the second case $\langle d \rangle^n \geq 0.99N \geq \log^2 N.$ Proposition \ref{ERspectra2} ensures
\begin{align*}
	\tilde{\partial}^n \leq \lambda^n =O_{p} \left(\frac{1}{\sqrt{\langle d \rangle^n}} \right).
\end{align*}
in this case as well.

Lastly, applying Lemma \ref{l:vol1} yields
\begin{align*}
\partial^n \leq \tilde{\partial}^n+2 \partial_*^n =O_{p} \left(\frac{1}{\sqrt{\langle d \rangle^n}} \right).	
\end{align*}
\end{proof}	
\subsection{Negative results}

In this section Theorem \ref{t:negative} and \ref{t:not_quasi_random} are proven.

The main idea for Theorem \ref{t:negative} is the following:
Instead of the concrete values of $\bar{\xi}^n(t)$ we are going to work with the expected value
\begin{align*}
& \mu^n(t):=\E \left(\bar{\xi}^n(t)\right).
\end{align*}

By standard arguments one can show that $\sup_{0 \leq t \leq T}\left \|\bar{\xi}^n(t)-u(t) \right \|_1 \to 0 \ \ st.$ implies $\left |\mu_s^n(t)-u_s(t) \right | \to 0 \ \ st.$ for all $0\leq t \leq T$ and $s \in S$ as the quantities in question are uniformly bounded, hence,  it is enough to disprove this simpler statement. 

According to \eqref{eq:condexp2}, $\frac{\d}{\d t} \mu^n(t)$ and $\frac{\d}{\d t}u(t)$  differs  in the quadratic term and those differences can be described by discrepancies. When $\partial^n$ does not vanish we can choose the initial conditions in such a way that the discrepancies are high resulting in a different rates. Since $\mu_s^n(t)-u_s(t) \approx \left( \frac{\d}{\d t}\mu_s^n(0)-\frac{\d}{\d t}u_s(0)\right)t$ this would leave a difference between the prediction of the ODE and the expectation for small values of $t$ resulting in the desired contradiction.

In order for this argument to be valid, we require  the second order terms to be negligible or in other words some regularity in the second derivative. For the ODE it automatically holds, however, for the stochastic processes some problem can arise when the degrees have extreme variation.

To illustrate this phenomena, let $G^n$ a sequence of star graphs and examine the SI process on it. Note that $\bar{d}^n=2\left(1-\frac{1}{N} \right).$ We initially choose the leafs to be infected and keep the center susceptible. This would lead to the large discrepancy
\begin{align*}
\delta\left(V_{S}^n(0),V_{I}^n(0)\right)=\frac{N-1}{2N\left(1-\frac{1}{N}\right)}-\frac{N-1}{N}\frac{1}{N}\approx \frac{1}{2},
\end{align*}
therefore, based on the argument above, one might expect $\mu_{I}^n(t)$ to differ considerably from $u_{I}(t)$. This is not the case thou. Clearly $\bar{\xi}^n_{I}(0)=\frac{N-1}{N} \to 1=:u_{I}(0)$, so the ODE approximation is the constant $u(t) \equiv 1$. The expected value will be close to $0$ as well as 
$$ \frac{N-1}{N}=\mu_{I}^n(0) \leq \mu_{I}^n(t) \leq 1. $$

To mitigate this problem, we will take the following rout:
When $\partial^n_*$ does not converge to $0$, then we show that the HMFA is not accurate for the degree process. When $\partial^n_* \to 0,$ on the other hand, we can thin our extreme sets whit large discrepancy by removing vertices whit too large degrees. The thinned sets will still contain vertices whit high discrepancy, while the second derivatives renaming bounded.

For technical reasons, we will utilize function $u^n(t)$, which is the solution of \eqref{eq:u} with the error-free initial condition $u^n(0)=\bar{\xi}^n(0).$

\begin{lemma}(Technical lemma for counterexamples)
\label{l:techical}

Assume there is a sequence of initial conditions such that there are $s \in \S, K,t_0 > 0$ and a sub-sequence $\left(n_k\right)_{k=1}^{\infty}$ such that
\begin{align}
\label{eq:tech1}
& \forall k \in \mathbb{Z}^{+} \ \sup_{0 \leq t \leq t_0} \left | \frac{\d^2}{\d t^2}\mu_{s}^{n_k}(t) \right | \leq K, \\
\label{eq:tech2}
& \limsup_{k \to \infty} \left | \frac{\d}{\d t}\mu_{s}^{n_k}(0)-\frac{\d}{\d t}u_{s}^{n_k}(0) \right |>0.	
\end{align}
Then we can modify the initial conditions such that $\bar{\xi}^n(0) \to u(0)$ for some $u(0) \in \Delta^{\S},$ while there is some $t>0$ such that
\begin{align*}
& \limsup_{n \to \infty} \left | \mu_{s}^n(t)-u_s(t) \right |  >0, 
\end{align*}
hence HMFA is inaccurate.	
\end{lemma}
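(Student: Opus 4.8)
The plan is to run a Taylor-expansion argument around $t=0$, using \eqref{eq:tech1} to control the quadratic remainder uniformly along the subsequence. The crucial simplification is that the initial configurations are deterministic, so $\mu_s^{n_k}(0)=\bar{\xi}_s^{n_k}(0)=u_s^{n_k}(0)$: the trajectories $t\mapsto\mu_s^{n_k}(t)$ and $t\mapsto u_s^{n_k}(t)$ start from the same point. Differentiating \eqref{eq:condexp2} and evaluating at $t=0$ (where everything is deterministic) gives $\frac{\d}{\d t}\mu_s^n(0)=g_s(\bar{\xi}^n(0),\nu^n(0))$ and $\frac{\d}{\d t}u_s^n(0)=f_s(\bar{\xi}^n(0))$, so \eqref{eq:tech2} says exactly that the two trajectories leave their common starting point with velocities that stay separated along the subsequence.

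First I would record a uniform bound on the second derivative of the ODE side: since every $u^{n_k}(t)$ stays in the compact simplex $\Delta^{\S}$ and $f$ is a fixed quadratic (hence smooth) vector field, the chain rule gives $\frac{\d^2}{\d t^2}u^{n_k}(t)=Df\big(u^{n_k}(t)\big)f\big(u^{n_k}(t)\big)$, which is bounded by a constant $K'$ depending only on $Q$, uniformly in $k$. Combining this with \eqref{eq:tech1} and applying Taylor's theorem with Lagrange remainder, the zeroth-order terms cancel and I obtain, for $0\le t\le t_0$,
$$\big|\mu_s^{n_k}(t)-u_s^{n_k}(t)-\big(\tfrac{\d}{\d t}\mu_s^{n_k}(0)-\tfrac{\d}{\d t}u_s^{n_k}(0)\big)t\big|\le\tfrac{K+K'}{2}t^2.$$
Using \eqref{eq:tech2}, I then pass to a sub-subsequence (still written $n_k$) along which $\big|\frac{\d}{\d t}\mu_s^{n_k}(0)-\frac{\d}{\d t}u_s^{n_k}(0)\big|\ge\varepsilon$ for a fixed $\varepsilon>0$ and, by compactness of $\Delta^{\S}$, along which $\bar{\xi}^{n_k}(0)\to u(0)$ for some $u(0)\in\Delta^{\S}$. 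Taking $t^\ast:=\min\{t_0,\varepsilon/(K+K')\}$ makes the linear term dominate the remainder, so that $\big|\mu_s^{n_k}(t^\ast)-u_s^{n_k}(t^\ast)\big|\ge\varepsilon t^\ast/2$ for every $k$.

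It remains to upgrade this subsequential statement to a full sequence converging to $u(0)$ and to replace $u^{n_k}$ by the limiting solution $u$. I would define modified initial conditions equal to the given ones on the indices $n_k$ and, for the remaining $n$, chosen so that $\bar{\xi}^n(0)\to u(0)$ (possible since $N(n)\to\infty$ lets empirical fractions approximate $u(0)$ arbitrarily well). This leaves the two velocity quantities on the subsequence untouched, so \eqref{eq:tech1}--\eqref{eq:tech2} persist there, while now $\bar{\xi}^n(0)\to u(0)$ along the full sequence. Since $u^{n_k}(0)=\bar{\xi}^{n_k}(0)\to u(0)$ and $f$ is Lipschitz, continuous dependence on initial data (a Grönwall estimate as used for Lemma \ref{l:ODE}) yields $u_s^{n_k}(t^\ast)\to u_s(t^\ast)$; a triangle inequality then gives $\limsup_{n\to\infty}\big|\mu_s^n(t^\ast)-u_s(t^\ast)\big|\ge\varepsilon t^\ast/2>0$, which by the reduction to the expectation $\mu^n$ noted before the lemma shows HMFA is inaccurate.

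I expect the genuinely load-bearing point to be the \emph{uniform} second-order control, not the algebra. The star-graph discussion preceding the lemma shows that without \eqref{eq:tech1} the linear approximation can fail outright — the velocity at $0$ can be large while $\mu_s^n$ saturates almost immediately — so the real content of the lemma is that \eqref{eq:tech1} is precisely the regularity hypothesis that validates the Taylor comparison on a fixed interval $[0,t^\ast]$ independent of $k$. Once that is in place, the remaining steps (extracting a convergent subsequence, padding to a full sequence, and passing from $u^{n_k}$ to $u$) are routine compactness and continuous-dependence arguments.
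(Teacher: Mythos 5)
Your proposal is correct and follows essentially the same route as the paper's own proof: a Taylor expansion at $t=0$ with the uniform second-derivative bound on the ODE side coming from $f$ being polynomial on the compact simplex, extraction of a sub-subsequence realizing the limsup and (by compactness of $\Delta^{\S}$) a convergent initial condition, the choice $t^\ast \sim \varepsilon/(K+K')$ to make the linear term dominate, padding the remaining indices so $\bar{\xi}^n(0)\to u(0)$ along the full sequence, and continuous dependence to replace $u^{n_k}$ by $u$. The paper merely merges your constants $K$ and $K'$ into a single enlarged $K$; otherwise the arguments coincide step for step.
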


\begin{proof}(Lemma \ref{l:techical})

We can choose $K$ to be large enough such that
\begin{align*}
& \left| \frac{\d^2}{\d t^2}u^{n_k}_s(t) \right |=\left | \frac{\d}{\d t}f_s(u^{n_k}(t)) \right | \leq K
\end{align*}	
also holds uniformly in time and  for any initial conditions. Here we used that $f_s$ is a polynomial and the solutions stay at the simplex $\Delta^{\S}$.

Let $0<t \leq t_0$. Then there is mean value $\tau \in [0,t]$ such that
\begin{align*}
& \mu_s^{n_k}(t)=\mu_{s}^{n_k}(0)+\frac{ \d}{\d t}\mu_{s}^{n_k}(0)t+\frac{1}{2}\frac{\d^2}{\d t^2} \mu_{s}^{n_k}(\tau)t^2\\
& \left|\mu_s^{n_k}(t)-\mu_{s}^{n_k}(0)-\frac{ \d}{\d t}\mu_{s}^{n_k}(0)t \right| \leq\frac{1}{2} \left | \frac{\d^2}{\d t^2} \mu_{s}^{n_k}(\tau)\right | t^2 \leq \frac{K}{2}t^2.
\end{align*}
Similarly,
\begin{align*}
& \left| u_s^{n_k}(t)-u_{s}^{n_k}(0)-\frac{\d}{\d t}u_{s}^{n_k}(0)t \right | \leq \frac{K}{2}t^2.
\end{align*}

Choose a sub-sequence $\left(n_k'\right)_{k=1}^{\infty}$ such that 
\begin{align*}
& \lim_{k \to \infty} \left | \frac{\d}{\d t}\mu_{s}^{n_k'}(0)-\frac{\d}{\d t}u_{s}^{n_k'}(0) \right |= \limsup_{k \to \infty} \left | \frac{\d}{\d t}\mu_{s}^{n_k}(0)-\frac{\d}{\d t}u_{s}^{n_k}(0) \right |=:\varepsilon>0.
\end{align*}
Recall $u_{s}^{n}(0)=\mu_{s}^{n}(0).$ Then
\begin{align*}
\left| \mu_{s}^{n_k'}(t)-u_{s}^{n_k'}(t) \right | \geq  \left | \frac{\d}{\d t}\mu_{s}^{n_k'}(0)-\frac{\d}{\d t}u_{s}^{n_k'}(0) \right |t-Kt^2 \to \varepsilon t-Kt^2.
\end{align*}
Choosing $t = \min \left\{ \frac{\varepsilon}{2K}, t_0\right\}$ yields $\limsup_{k \to \infty} \left| \mu_{s}^{n_k'}(t)-u_{s}^{n_k'}(t) \right |>0.$
From the construction, it is clear that the inequality is also true for any further sub-sequence.

As $\Delta^{S}$ is compact we can choose a further sub-sequence $\left(n_{k}'' \right)$ such that $\mu^{n_k''}(0) \to u(0)$ for some $u(0) \in \Delta^{\S}. $ When $n \neq n_k''$ for some $k$, we modify the initial conditions such that $\mu^{n}(0) \to u(0)$ hold for the whole sequence. Due to the continous dependence on the initial conditions, this also ensures $u^n(t) \to u(t).$ Thus,
\begin{align*}
& \limsup_{n \to \infty} \left | \mu_s^n(t)-u(t) \right|=\limsup_{n \to \infty} \left | \mu_s^n(t)-u^n_s(t) \right| \geq \limsup_{k \to \infty}\left | \mu_s^{n_k''}(t)-u^{n_k''}_s(t) \right|>0.
\end{align*}
\end{proof}

\begin{lemma}
\label{l:del*}

When $\limsup_{n \to \infty} \partial^n_*>0$ the HMFA is not accurate for the degree process.
\end{lemma}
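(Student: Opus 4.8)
The plan is to apply the technical counterexample lemma (Lemma \ref{l:techical}) to the \emph{degree process} with $s=a$, and the whole game is to pick the initial condition cleverly. Recall that in the degree process each vertex $i$ started in state $a$ jumps to $b$ at the constant rate $d^n(i)/\bar{d}^n$, independently of the others, and then stays put. Hence, writing $A:=V_a^n(0)$ for the set of vertices initialized in state $a$, we have $\E\left(\xi_{i,a}^n(t)\right)=\1{i\in A}\,e^{-\frac{d^n(i)}{\bar{d}^n}t}$ and therefore the exact closed form
\begin{align*}
\mu_a^n(t)=\frac{1}{N}\sum_{i\in A}e^{-\frac{d^n(i)}{\bar{d}^n}t}.
\end{align*}
Differentiating this finite sum term by term gives explicit expressions for $\frac{\d}{\d t}\mu_a^n$ and $\frac{\d^2}{\d t^2}\mu_a^n$, which is precisely the data Lemma \ref{l:techical} requires.

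First I would check the derivative gap \eqref{eq:tech2}. Since the initial condition is deterministic, $u_a^n(0)=\mu_a^n(0)=|A|/N$, and the HMFA ODE $\frac{\d}{\d t}u_a=-u_a$ gives $\frac{\d}{\d t}u_a^n(0)=-|A|/N$. Comparing this with $\frac{\d}{\d t}\mu_a^n(0)=-\frac{1}{N\bar{d}^n}\sum_{i\in A}d^n(i)$, the difference collapses exactly to $-\delta(A,[N])$, by the definition \eqref{eq:del*} of $\delta(A,[N])$. Taking $A:=V_-^n$ and invoking \eqref{eq:del*_nice} yields
\begin{align*}
\left|\frac{\d}{\d t}\mu_a^n(0)-\frac{\d}{\d t}u_a^n(0)\right|=\left|\delta\left(V_-^n,[N]\right)\right|=\partial_*^n.
\end{align*}
Passing to a subsequence $(n_k)$ along which $\partial_*^{n_k}\to\limsup_n\partial_*^n>0$ then establishes \eqref{eq:tech2}.

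The crux is the uniform second-derivative bound \eqref{eq:tech1}, and this is exactly where choosing $A=V_-^n$ (rather than the equally discrepant $V_+^n$) pays off. We have
\begin{align*}
\frac{\d^2}{\d t^2}\mu_a^n(t)=\frac{1}{N}\sum_{i\in A}\left(\frac{d^n(i)}{\bar{d}^n}\right)^2 e^{-\frac{d^n(i)}{\bar{d}^n}t},
\end{align*}
and since every $i\in V_-^n$ satisfies $d^n(i)<\bar{d}^n$ by definition \eqref{eq:V+}, each summand is at most $\left(d^n(i)/\bar{d}^n\right)^2<1$; as the exponential factor is $\leq 1$ for $t\geq 0$, the whole expression is bounded by $|V_-^n|/N\leq 1$, uniformly in $t$. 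Thus \eqref{eq:tech1} holds with $K=1$. This is the subtle point the surrounding discussion anticipates: had we used the high-degree extreme set $V_+^n$, the degrees — and hence $\frac{\d^2}{\d t^2}\mu_a^n(0)$ — could blow up (this is the star-graph pathology), so the extreme set carrying the discrepancy \emph{must} be chosen among vertices of controlled degree, which $V_-^n$ provides for free while retaining $\left|\delta\left(V_-^n,[N]\right)\right|=\partial_*^n$.

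With both hypotheses of Lemma \ref{l:techical} verified along $(n_k)$, the lemma furnishes initial conditions with $\bar{\xi}^n(0)\to u(0)$ for some $u(0)\in\Delta^{\S}$ and a time $t>0$ at which $\limsup_{n\to\infty}\left|\mu_a^n(t)-u_a(t)\right|>0$. Hence the HMFA is inaccurate for the degree process, which is the assertion of the lemma. I expect the only genuine obstacle to be the second-derivative control; once the low-degree set $V_-^n$ is identified as the right extreme set, everything else is routine differentiation and bookkeeping.
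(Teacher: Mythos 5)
Your proposal is correct and follows essentially the same route as the paper: initialize state $a$ on the low-degree set $V_-^n$, compute $\mu_a^n(t)=\frac{1}{N}\sum_{i\in V_-^n}e^{-\frac{d^n(i)}{\bar{d}^n}t}$ explicitly, bound the second derivative by $1$ using $d^n(i)<\bar{d}^n$, identify the initial derivative gap as $\left|\delta\left(V_-^n,[N]\right)\right|=\partial_*^n$ via \eqref{eq:del*_nice}, and invoke Lemma \ref{l:techical} along a subsequence realizing the $\limsup$. Your remark on why $V_-^n$ (rather than $V_+^n$) is the right extreme set is exactly the consideration the paper's surrounding discussion makes via the star-graph example.
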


\begin{proof}(Lemma \ref{l:del*})
	
Let $\left(n_{k} \right)_{k=1}^{\infty}$ such that
$\lim_{k \to \infty}\partial^{n_k}_*=\limsup_{n \to \infty}\partial^n_*=:\varepsilon>0.$ 

Recall \eqref{eq:V+}. Choose the initial conditions to be $\xi_{i,a}^n(0)=\1{i \in V_{-}^{n_k}}$ and the rest of to vertices to be $b$. Based on \eqref{eq:del*_nice}, $\left| \delta(V_{-}^{n_k},[N]) \right|=\partial_{*}^{n_k} \to \varepsilon.$

The expectation and the solution for the ODE can be calculated explicitly.
\begin{align*}
 \E \left( \xi_{a}^{n_k}(t) \right)=&\xi_{a}^{n_k}(0)e^{-\frac{d^{n_k}(i)}{\bar{d}^{n_k}}t} \\
 \mu_{a}^{n_k}(t)=&\frac{1}{N(n_k)}\sum_{i \in V_{-}^{n_k}} e^{-\frac{d^{n_k}(i)}{\bar{d}^{n_k}}t} \\
 u_{a}^{n_k}(t)=&\mu_{a}^{n_k}(0)e^{-t}
\end{align*}

 $d^{n_k}(i)<\bar{d}^{n_k}$ yields \eqref{eq:tech1} as
\begin{align*}
	\left | \frac{\d^2}{\d t ^2} \mu_{a}^{n_k}(t) \right |=&\frac{1}{N(n_k)}\sum_{i \in V_{-}^{n_k}} \left(\frac{d^{n_k}(i)}{\bar{d}^{n_k}}\right)^2 e^{-\frac{d^{n_k}(i)}{\bar{d}^{n_k}}t}  \leq \\
	& \frac{1}{N(n_k)}\sum_{i \in V_{-}^{n_k}} 1= \mu_{a}^{n_k}(0) \leq 1.
\end{align*}
As for \eqref{eq:tech2},
\begin{align*}
& \frac{\d}{\d t}\mu_{a}^{n_k}(0)=-\frac{1}{N(n_k)}\sum_{i \in V_{-}^{n_k}} \frac{d^{n_k}(i)}{\bar{d}^{n_k}}=-\frac{e\left(V_{-}^{n_k}, [N]\right)}{N(n_k)\bar{d}^{n_k}} \\
& \frac{\d}{\d t}u_{a}^{n_k}(0)=-u_{a}^{n_k}(0)=-\mu_{a}^{n_k}(0)=-\frac{\left| V_{-}^{n_k} \right|}{N(n_k)} \\
& \left |\frac{\d}{\d t}\mu_{a}^{n_k}(0)-\frac{\d}{\d t}u_{a}^{n_k}(0) \right |=\left|\delta \left(V_{-}^{n_k},[N]\right) \right| \to \varepsilon>0.
\end{align*}
\end{proof}

For technical reasons, we assume $\liminf_{n \to \infty} \bar{d}^n \geq 1$. Luckily, this is not a too restrictive condition based on the lemma below.
\begin{lemma}
\label{l:degree}

Assume $\liminf_{n \to \infty} \bar{d}^n <1.$ Then the HMFA is not accurate for the SI process.
\end{lemma}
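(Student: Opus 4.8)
The plan is to exploit the fact that $\liminf_{n\to\infty}\bar d^n<1$ forces the graph to contain a macroscopic set of isolated vertices, which are completely inert under the SI dynamics. First I would fix $c\in(0,1)$ with $\liminf_{n\to\infty}\bar d^n<c$ and pass to a subsequence $(n_k)$ along which $\bar d^{n_k}\le c$. Since $\sum_i d^{n_k}(i)=N\bar d^{n_k}$ and every non-isolated vertex contributes at least $1$ to this sum, the number of vertices of degree $0$ is at least $N(1-\bar d^{n_k})\ge N(1-c)=:Nq_0$ with $q_0\in(0,1)$. Under the SI process a susceptible isolated vertex $i$ has infection rate $\beta\phi_{i,I}^n=\frac{\beta}{\bar d^n}\sum_j a_{ij}^n\xi_{j,I}^n=0$, so it remains susceptible for all time; this is the structural obstruction that the ODE cannot detect.

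Next I would build the initial condition. Along $(n_k)$ I reserve $m_k:=\lceil Nq_0/2\rceil$ of the isolated vertices to start in state $S$ and put every remaining vertex in state $I$ (with $R$ empty); for the remaining indices $n\notin\{n_k\}$ I simply infect an arbitrary $\lceil pN\rceil$ vertices. With $p:=1-q_0/2\in(1/2,1)$ this gives $\bar\xi_I^n(0)\to p$, so $\bar\xi^n(0)\to u(0)$ with $u(0)=(u_S,u_I,u_R)(0)=(1-p,p,0)$, a legitimate asymptotic initial condition. Reserving only half of the isolated vertices is what simultaneously guarantees an interior initial infected fraction $p\in(0,1)$ and an a priori susceptible reserve of relative size $q_0/2$.

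Finally I would contrast the two evolutions. For SI the HMFA reduces to the logistic equation $\frac{\d}{\d t}u_I=\beta u_I(1-u_I)$ with $u_I(0)=p\in(0,1)$, whose solution is strictly increasing with $u_I(t)\uparrow 1$; hence $\eta:=u_I(1)-p>0$. On the stochastic side the $m_k$ reserved isolated vertices stay in $S$, so $\bar\xi_S^{n_k}(t)\ge m_k/N\ge q_0/2$ and therefore $\bar\xi_I^{n_k}(t)\le 1-q_0/2=p$ surely, for every $t\ge 0$. Consequently $\sup_{0\le t\le 1}\|\bar\xi^{n_k}(t)-u(t)\|_1\ge u_I(1)-\bar\xi_I^{n_k}(1)\ge\eta$ holds surely along the subsequence, which rules out convergence in probability to $0$ and shows the HMFA is inaccurate for the SI process.

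The main obstacle is modelling rather than computational: one must choose an initial condition that is both interior (so the ODE genuinely predicts substantial further infection, producing a strictly positive gap) and pinned below $1$ by the susceptible reserve. I would deliberately avoid routing through Lemma \ref{l:techical}, since that lemma requires a uniform bound on $\frac{\d^2}{\d t^2}\mu_s^{n_k}$, and under the present hypotheses the degree sequence may be wildly inhomogeneous (as in the star-graph example), so this second-derivative bound can fail. The frozen-vertex argument above instead yields a deterministic, time-uniform cap on $\bar\xi_I^{n_k}(t)$ and thereby sidesteps any regularity requirement entirely.
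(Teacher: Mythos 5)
Your proof is correct, and it rests on the same core observation as the paper's---that $\liminf_{n}\bar d^n<1$ forces a positive fraction of isolated, dynamically inert vertices---but it then takes a genuinely different route. The paper infects exactly the isolated vertices and leaves everything else susceptible, so the stochastic process freezes entirely, and it then invokes Lemma \ref{l:techical}: the frozen process has $\frac{\d}{\d t}\mu_{I}^{n_k}(0)=0$ and vanishing second derivative, while the ODE has initial derivative $\beta d_0(1-d_0)>0$, so \eqref{eq:tech1} and \eqref{eq:tech2} hold. You do the opposite: you keep a reserved half of the isolated vertices susceptible, infect everything else, and argue directly, with no Taylor expansion---the reserve gives the sure, time-uniform cap $\bar\xi_I^{n_k}(t)\le p<1$, while the logistic HMFA solution climbs strictly above $p$, so the sup-distance is bounded below by a deterministic $\eta>0$ on $[0,1]$, ruling out stochastic convergence. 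Two remarks on the comparison. First, your stated reason for avoiding Lemma \ref{l:techical} is slightly off: with your initial data every susceptible vertex is isolated, so your process is also completely frozen and \eqref{eq:tech1} would hold trivially (with $K=0$ for the stochastic side); the lemma is avoidable, not inapplicable---your direct argument is simply cleaner. Second, and more substantively, your construction is more robust: in the edge case $d_0:=\liminf_n\bar d^n=0$ the paper's choice infects a fraction tending to $1-d_0=1$ of the vertices, the ODE derivative $\beta d_0(1-d_0)$ tends to $0$, and \eqref{eq:tech2} fails, so the paper's argument as literally written degenerates there; your choice $p=1-q_0/2$ keeps the initial infected fraction bounded away from both $0$ and $1$, so your argument covers all $d_0\in[0,1)$ uniformly.
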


\begin{proof}(Lemma \ref{l:degree})

Let $\left(n_k\right)_{k=1}^{\infty}$ a sub-sequence with the property $\lim_{k \to \infty}\bar{d}^{n_k}=\liminf_{n \to \infty} \bar{d}^{n}=:d_0<1.$
\begin{align*}
&\bar{d}^{n_k}= \frac{1}{N(n_k)}\sum_{i=1}^{N(n_k)}d^{n_k}(i)\geq \frac{1}{N(n_k)}\sum_{d^{n_k}(i)\geq 1}d^{n_k}(i) \geq \frac{1}{N(n_k)}\sum_{d^{n_k}(i)\geq 1} 1 \\
& \frac{1}{N(n_k)}\left | \left \{ i \in [N(n_k)] \left| d^{n_k}(i)=0 \right. \right\} \right | \geq 1-\bar{d^{n_k}} \to 1-d_{0}>0,
\end{align*}
Therefore, there exists $A^{n_k} \subset [N(n_k)]$ such that $d^{n_k}(i)=0$ for any $i \in A^{n_k}$ and $\frac{\left| A^{n_k} \right|}{N(n_k)} \to 1-d_0.$

Let examine the SI process whit initial conditions $\xi_{i,I}^{n_k}(0)=\1{i \in A^{n_k}}$ and the rest of the vertices are susceptible. As all the infected vertices are isolated the dynamics in the Markov-process remains constant $\mu_{I}^{n_k}(t)\equiv \mu_{I}^{n_k}(0)$. For the ODE on the other hand 
$$\frac{\d}{\d t}u_{I}^{n_k}(0)=\beta \mu_{I}^{n_k}(0) \left(1-\mu_{I}^{n_k}(0)\right) \to \beta d_0(1-d_0)>0,$$
so the conditions of Lemma \ref{l:techical} clearly hold.
\end{proof}

\begin{lemma}(Getting rid of the extreme vertices)
\label{l:thining}

Assume $\limsup_{n \to \infty} \partial_2^n>0$ and $\partial_*^n \to 0$. $A^n, B^n \subset [N]$ are disjoint sets such that $\left| \delta(A^n,B^n) \right|=\partial_2^n$.
\begin{align*}
& D^{n}:= \left \{ i \in [N] \left |d^n(i) \leq 2 \bar{d}^{n} \right. \right\} \\
& A_{D}^n:=A^n \cap D^n \\
& B_{D}^n:=B^n \cap D^n
\end{align*}
Then $\limsup_{n \to \infty} \left| \delta\left(A_{D}^n,B_{D}^n\right) \right |>0.$
\end{lemma}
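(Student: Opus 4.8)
The plan is to show that discarding the high-degree vertices perturbs the discrepancy by at most a quantity of order $\partial_*^n$, which vanishes by assumption; hence $|\delta(A_D^n,B_D^n)|$ stays bounded away from zero along the same subsequence on which $\partial_2^n$ does.

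First I would control the exceptional set $E^n:=(D^n)^c=\{i\in[N]:d^n(i)>2\bar{d}^n\}$. By \eqref{eq:del*_nice} we have $\sum_{i=1}^N|d^n(i)-\bar{d}^n|=2N\bar{d}^n\partial_*^n$. A Markov-type argument then gives the cardinality bound
\begin{align*}
|E^n|\,\bar{d}^n<\sum_{i\in E^n}\left(d^n(i)-\bar{d}^n\right)\leq 2N\bar{d}^n\partial_*^n,
\end{align*}
so that $\tfrac{|E^n|}{N}<2\partial_*^n$. The crucial point, however, is a bound not on the \emph{number} of exceptional vertices but on their total degree, since a single high-degree vertex can carry many edges. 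Writing $vol(E^n)=\sum_{i\in E^n}\left(d^n(i)-\bar{d}^n\right)+|E^n|\,\bar{d}^n$ and combining the $\ell^1$ identity with the cardinality bound yields
\begin{align*}
\frac{vol(E^n)}{N\bar{d}^n}<4\partial_*^n.
\end{align*}

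Next I would decompose $\delta(A^n,B^n)$ by additivity along $A^n=A_D^n\sqcup(A^n\cap E^n)$ and $B^n=B_D^n\sqcup(B^n\cap E^n)$, obtaining
\begin{align*}
\delta(A^n,B^n)=\delta(A_D^n,B_D^n)+\delta(A_D^n,B^n\cap E^n)+\delta(A^n\cap E^n,B_D^n)+\delta(A^n\cap E^n,B^n\cap E^n).
\end{align*}
Each of the three error terms has a factor meeting $E^n$. In every such $\delta(X,Y)$ the product part $\tfrac{|X|}{N}\tfrac{|Y|}{N}$ is at most $\tfrac{|E^n|}{N}\leq 2\partial_*^n$, while the edge part obeys $\tfrac{e(X,Y)}{N\bar{d}^n}\leq\tfrac{vol(E^n)}{N\bar{d}^n}<4\partial_*^n$, since $e(X,Y)\leq e([N],Y)=vol(Y)\leq vol(E^n)$ whenever $Y\subset E^n$, and symmetrically when $X\subset E^n$. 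Summing the three terms gives $|\delta(A^n,B^n)-\delta(A_D^n,B_D^n)|\leq C\partial_*^n$ for an absolute constant $C$.

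Finally, since $|\delta(A^n,B^n)|=\partial_2^n$, this yields $|\delta(A_D^n,B_D^n)|\geq\partial_2^n-C\partial_*^n$; taking $\limsup$ and using $\partial_*^n\to 0$ produces $\limsup_{n\to\infty}|\delta(A_D^n,B_D^n)|\geq\limsup_{n\to\infty}\partial_2^n>0$, as required. The main obstacle is the volume estimate in the first step: the cardinality bound alone is useless for controlling $e(\cdot,\cdot)$ because the exceptional vertices are precisely the high-degree ones, so the whole argument hinges on the fact that $\partial_*^n$ controls the $\ell^1$ deviation $\sum_i|d^n(i)-\bar{d}^n|$ and thereby the total degree mass sitting on $E^n$.
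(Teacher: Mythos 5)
Your proof is correct, and it rests on the same two facts the paper uses: the Markov bound $|E^n|/N \leq 2\partial_*^n$ and the observation that $\partial_*^n$ controls not just the \emph{number} but the \emph{volume} of the high-degree vertices, $vol(E^n)/(N\bar d^n) \leq 4\partial_*^n$ (the paper obtains the same volume control in the equivalent form $e(\bar D^n,[N])/(N\bar d^n) \leq |\delta(\bar D^n,[N])| + |\bar D^n|/N \leq 3\partial_*^n$). Where you differ is the logical structure: the paper argues indirectly, assuming $|\delta(A_D^n,B^n)| \to 0$, extracting a contradiction from the volume bound, and then repeating the argument in the second variable; you instead split $\delta(A^n,B^n)$ additively into the four terms $\delta(A_D^n,B_D^n)+\delta(A_D^n,B^n\cap E^n)+\delta(A^n\cap E^n,B_D^n)+\delta(A^n\cap E^n,B^n\cap E^n)$ and bound the three exceptional terms at once, which yields the explicit stability estimate $\left|\,|\delta(A_D^n,B_D^n)|-\partial_2^n\,\right| \leq C\partial_*^n$. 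Your direct route is cleaner: it avoids the subsequence bookkeeping and the two-stage contradiction, and the quantitative form is in the spirit of the paper's Lemma \ref{l:vol2} and could be reused wherever an explicit error in terms of $\partial_*^n$ is wanted. One cosmetic point: the strict inequalities in your first step degenerate when $E^n=\emptyset$ (both sides are $0$), so state them as $\leq$; the conclusion is unaffected since all exceptional terms then vanish.
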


\begin{proof}(Lemma \ref{l:thining})
	
First, we show that $\limsup_{n \to \infty} \left| \delta \left(A_{D}^{n},B^n\right) \right |>0$. Introduce
\begin{align*}
& \bar{D}^{n}:= \left \{ i \in [N] \left |d^n(i) > 2 \bar{d}^n \right. \right\} \\
& \bar{A}_{D}^n:=A^n \cap \bar{D}^n	
\end{align*}
Clearly $A^n_{D} \sqcup \bar{A}^n_{D}=A^n$ and $\bar{A}_{D}^n \subset \bar{D}^n.$

Let $\iota^n$ be a uniform random variable on $[N]$.
\begin{align*}
 \frac{\left |\bar{D}^n \right |}{N}=&\pr\left(d^n(\iota^n)>2 \bar{d}^n \right) \leq \pr \left( \left |d^n(\iota^n)-\bar{d}^n \right | > \bar{d}^n\right)	 \leq \\
 & \frac{1}{\bar{d}^n}\E \left | d^n(\iota^n)-\bar{d}^n \right |=\frac{1}{N \bar{d}^n}\sum_{i=1}^{n} \left |d^n(i)- \bar{d}^n\right |=2 \partial_*^n \to 0
\end{align*}

Assume indirectly $\left| \delta \left(A_{D}^{n},B^n\right) \right | \to 0.$ Therefore $\limsup_{n \to \infty} \left |\delta(A^n,B^n) \right |>0$ imply $\limsup_{n \to \infty} \left |\delta\left(\bar{A}^n_D,B^n\right) \right |>0.$ On the other hand, 
\begin{align*}
& \limsup_{n \to \infty} \left |\delta\left(\bar{A}^n_{D},B^n\right) \right |=\limsup_{n \to \infty} \left | \frac{e\left(\bar{A}_{D}^n, B^n\right)}{N \bar{d}^n}-\frac{\left|\bar{A}_{D}^n \right|}{N} \frac{\left | B^n \right |}{N} \right |= \\
&\limsup_{n \to \infty}  \frac{e\left(\bar{A}_{D}^n, B^n\right)}{N \bar{d}^n} \leq \limsup_{n \to \infty}  \frac{e\left(\bar{D}^n, [N]\right)}{N \bar{d}^n}= \\
& \limsup_{n \to \infty} \left | \frac{e\left(\bar{D}^n, [N]\right)}{N \bar{d}^n}-\frac{\left|\bar{D}^n \right|}{N} \right |=\limsup_{n \to \infty} \left | \delta \left( \bar{D}^n, [N]\right) \right | \leq \limsup_{n \to \infty} \partial_{*}^n=0,
\end{align*}
resulting in a contradiction.

$\limsup_{n \to \infty} \left | \delta \left(A_{D}^n, B_{D}^n \right) \right |>0$ follows from the same argument in the second variable.
\end{proof}

Whit this tool is hand we can finally prove Theorem \ref{t:negative}.

\begin{proof} (Theorem \ref{t:negative} )
	
$\limsup_{n \to  \infty } \partial^n>0$ is assumed. We may also assume $\partial^n_* \to 0$ and $\limsup_{n \to \infty} \bar{d}^n \geq 1,$ otherwise, Lemma \ref{l:del*} and \ref{l:degree} proves the statement. Based on $\partial_*^n \to 0$ and Lemma \ref{l:hierarchy} $\partial_{2}^n, \partial^n$ are equivalent, hence $\limsup_{n \to \infty } \partial^n_2>0.$

Based on Lemma \ref{l:thining} there are disjoint sets $A_{D}^n,B_{D}^n \subset [N]$ such that $d^n(i) \leq 2 \bar{d}^n$ for all $i \in A_{D}^n,B_{D}^n $ and
$$\delta_0:=\limsup_{n \to \infty} \left | \delta(A^n_{D},B_{D}^n) \right |>0. $$
We can choose a sub-sequence $\left(n_k\right)_{k=1}^{\infty}$ such that  $\left | \delta(A^{n_k}_{D},B_{D}^{n_k}) \right | \to \delta_0$
and $\bar{d}^{n_k} \geq \frac{1}{2}$ as we have $\limsup_{n \to \infty} \bar{d}^n \geq 1$ as well.

Define an SI process whit initial conditions $V_S^n(0)=A_D^n, \ V_{I}^n(0)=B_D^n$ and the rest of the vertices are in state $R$. Our goal is to show that the conditions in Lemma \ref{l:techical} are satisfied for state $S$.

The derivatives are
\begin{align*}
&\frac{\d}{\d}\mu_{S}^{n}(t)=-\beta \E \left( \nu_{SI}^n(t)\right),\\
& \frac{\d}{\d t}u^n_S(t)=-\beta u_{S}^n(t)u_{I}^n(t).
\end{align*}
For time $t=0$
\begin{align*}
& \left| \frac{\d}{\d}\mu_{S}^{n_k}(0)-\frac{\d }{\d t}u_{S}^{n_k}(0) \right|=\beta \left | \delta \left(A_{D}^{n_k},B^{n_k}_D\right) \right | \to \beta \delta_0>0,
\end{align*}
hence \eqref{eq:tech2} is satisfied. For \eqref{eq:tech1} we  use \eqref{eq:simon}.
\begin{align*}
 &\left | \frac{\d^2}{\d t^2}\mu_{S}^{n_k}(t) \right|= \beta \left| \frac{\d}{ \d t} \nu_{SI}^{n_k}(t) \right |=\frac{\beta}{N \bar{d}^{n_k}} \left | \frac{\d}{\d t}[SI]^{n_k}(t) \right| \leq \\
 & \frac{\beta^2}{N \left(\bar{d}^{n_k} \right)^2} \left([SSI]^{n_k}(t)+[ISI]^{n_k}(t)+[SI]^{n_k}(t) \right)
\end{align*}

Recall the definition of $D^n$ in Lemma \ref{l:thining}. Since the set of vertices in state $S$ can only decrease $V_{S}^n(t) \subset V_{S}^n(0) \subset D^n$. As for state $I$, it can only increase, however no other extra vertices can become infected then those who where initially susceptible, hence $V_{I}^n(t) \subset V_{S}^n(0) \sqcup V_{I}^n(0) \subset D^n.$

\begin{align*}
&\frac{1}{N \left(\bar{d}^{n_k} \right)^2}[SSI]^{n_k}(t)=\frac{1}{N \left(\bar{d}^{n_k} \right)^2} \E \left( \sum_{i=1}^{N}\sum_{j=1}^{N}\sum_{l=1}^{N}a_{ij}^{n_k} a_{jl}^{n_k} \xi_{i,S}^{n_k}(t)\xi_{j,S}^{n_k}(t)\xi_{l,I}^{n_k}(t) \right) \leq \\
& \frac{1}{N \left(\bar{d}^{n_k} \right)^2}  \sum_{i=1}^{N}\sum_{j \in D^n}\sum_{l=1}^{N}a_{ij}^{n_k} a_{jl}^{n_k}  =\frac{1}{N \left(\bar{d}^{n_k} \right)^2}\sum_{j \in D^n} \left(d^{n_k}(j) \right)^2 \leq \frac{1}{N}\sum_{j \in D^n} 4 \leq 4.
\end{align*}
The same bound goes for $\frac{1}{N \left(\bar{d}^{n_k} \right)^2}[ISI]^{n_k}(t).$ As for the last term,
\begin{align*}
& \frac{1}{N \left(\bar{d}^{n_k} \right)^2}[SI]^{n_k}(t)= \frac{\nu_{SI}^{n_k}(t)}{\bar{d}^{n_k}} \leq 2,	
\end{align*}
where we used $\bar{d}^{n_k} \geq \frac{1}{2}$.		
\end{proof}

Lastly, we prove Theorem \ref{t:not_quasi_random}.

\begin{proof}(Theorem \ref{t:not_quasi_random})
	
	Let $\left(n_k \right)_{k=1}^{\infty}$ be a sub-sequence for which $G^{n_k}$ is bipartite. Then $[N(n_k)]$ can be split into $[N(n_k)]=V_1^{n_k} \sqcup V_2^{n_k}$ such that all of the edges go between  $V_1^{n_k}$ and $V_2^{n_k}$, thus $e\left(V_1^{n_k},V_2^{n_k} \right)=N \bar{d}^{n_k}.$
	\begin{align*}
		& \partial^{n_k} \geq \left|\delta\left(V_1^{n_k},V_2^{n_k}\right)\right|=1-\frac{\left|V_1^{n_k}\right|}{N} \left(1-\frac{\left|V_1^{n_k}\right|}{N}\right) \geq 1-\left( \frac{1}{2} \right)^2=\frac{3}{4}.
	\end{align*}
	Thus 1) is true.
	
	Define $\theta_{sup}:=\limsup_{n \to \infty}\theta^n>0$ and $\left(n_k\right)_{k=1}^{\infty}$ a sub-sequence such that $\theta^{n_k} \to \theta_{sup}.$ 
	
	At first, assume $0<\theta_{sup}<1.$ 
	\begin{align*}
		\partial^{n_k} \geq & \left | \delta\left(V_{\textrm{conn}}^{n_k},[N] \setminus V_{\textrm{conn}}^{n_k} \right) \right |=\frac{\left|V_{\textrm{conn}}^{n_k} \right|}{N(n_k)} \left(1-\frac{\left|V_{\textrm{conn}}^{n_k} \right|}{N(n_k)}\right)=\\
		&\theta^{n_k}\left(1-\theta^{n_k} \right ) \to \theta_{sup} \left(1-\theta_{sup}\right)>0
	\end{align*}
	
	When $\theta_{sup}=1$, then even the largest connected component covers only $o\left(N(n_k)\right)$ vertices. Let the connected components be $V_1^{n_k} , \dots, V_{Q_{n_k}}^{n_k}$ ordered increasingly. All of them contains at most $o\left(N(n_k)\right)$ vertices. 
	\begin{align*}
		& j^{n_k}:\min \left \{ j \geq 1 \left | \sum_{l=1}^{j} \left|V_l^{n_k} \right| \geq \frac{N(n_k)}{2} \right.  \right\} \\
		& A^{n_k}:= \bigsqcup_{i=1}^{j^{n_k}}V_l^{n_k}
	\end{align*}  
	Such a set must contain $\left |A^{n_k} \right |=\frac{N(n_k)}{2}+o\left(N(n_k)\right)$ vertices and does not share edges whit its complement. Using this set and its complement concludes the proof of 2) as
	\begin{align*}
		\partial^{n_k} \geq \left | \delta\left(A^{n_k},[N] \setminus A^{n_k}  \right) \right |=\frac{\left| A^{n_k} \right |}{N(n_k)} \left(1-\frac{\left| A^{n_k} \right |}{N(n_k)} \right)=\frac{1}{4}+o(1) \to \frac{1}{4}.
	\end{align*}
	
	For 3), let $(n_k)_{k=1}^{\infty}$ be a sequence such that $ \lim_{k \to \infty}\frac{\alpha^{n_k}}{N(n_k)}=\limsup_{n \to \infty} \frac{\alpha^{n}}{N}=:\alpha^*>0. $ $A^{n_k}$ is a sequence of independent set with size $\left|A^{n_k} \right|=\alpha^{n_k}.$ 
	\begin{align*}
		\partial^{n_k} \geq \left|\delta\left(A^{n_k},A^{n_k}\right) \right|=\left(\frac{\left|A^{n_k} \right|}{N(n_k)}\right)^2 \to \left(\alpha^*\right)^2>0.
	\end{align*}
	
	Lastly, $\left(n_k\right)_{k=1}^{\infty}$ is a sub-sequence with $\lim_{k \to \infty}\bar{d}^{n_k}=\liminf_{n \to \infty}\left(D^{n}\right)^c=:D.$ Then according to \eqref{alpha_bound}
	$$ \frac{\alpha^{n_k}}{N(n_k)} \geq \frac{1}{\bar{d}^{n_k}+1} \to \frac{1}{D+1}>0, $$
	hence the condition for 3) holds proving 4).
\end{proof}

\section{Conclusion}

In this work the accuracy of the homogeneous mean field approximation of density-dependent Markov population processes whit linear transition rates was studied	on graph sequences. The motivation for examining HMFA was giving explicit error bounds for graphs sequences which are only approximately homogeneous, like expanders, and to gain insight for other, more complex  approximations like  IMFA  for which HMFA is a special case under certain conditions.

A characterization was given, namely the large graph limit is the ODE given by HMFA for any such Markov process and convergent initial condition if an only if the graph sequence is  quasi-random (has vanishing discrepancy). 

Explicit error bound whit order $\frac{1}{\sqrt{N}}$ plus the discrepancy was shown. The discrepancy can be further upper bounded by the spectral gap plus a quantity measuring the heterogeneity of the degree distribution, meaning, when the graph is "random enough" and degrees are homogeneous then HMFA performs well.

For Erdős-Rényi and for random regular graph-sequences we show the upper bound is of order the inverse square root of the average degree. Therefore, diverging average degrees are sufficient for vanishing error for these cases. It has also been shown that diverging average degrees are necessary for quasi-randomness.

In future works, we hope to extend our results to more complex approximations like IMFA.

\subsection*{Acknowledgment}
The author is thankful for Illés Horváth, who introduced the topic and helped whit editing the article.

\bibliographystyle{abbrv}
\bibliography{salad}

\end{document}